\documentclass[11pt]{amsart}
\usepackage{mathdots}
\usepackage{hyperref}
\usepackage{amsmath,amsthm,amssymb} 
\usepackage[numbers]{natbib} 
\usepackage{tabularx}
\usepackage{enumerate}
\pdfstringdefDisableCommands{%
  \def\corref#1{}%
  \def\@corref{}%
}
\theoremstyle{plain}
\newtheorem{thm}{Theorem}[section]
\newtheorem{prop}[thm]{Proposition}
\newtheorem{cor}[thm]{Corollary}
\numberwithin{equation}{section}
\newtheorem{lemma}[equation]{Lemma}
\newtheorem{theorem}[equation]{Theorem}
\newtheorem{utheorem}{\textrm{\textbf{Theorem}}}

\theoremstyle{definition}
\newtheorem{quest}[thm]{Question}
\newtheorem{example}[thm]{Example}
\newtheorem{defn}[thm]{Definition}
\newtheorem{notn}[thm]{Notation}
\newtheorem{remark}[thm]{Remark}
\newcommand{\diag}{\mathrm{diag}}
\newcommand{\F}{\mathbb{F}}
\newcommand{\C}{\mathbb{C}}
\newcommand{\R}{\mathbb{R}}
\newcommand{\N}{\mathbb{N}}
\newcommand{\Z}{\mathbb{Z}}

\newcommand{\I}{\mathrm{I}}

\newcommand{\Pn}{\mathbb{P}}
\newcommand{\GL}{\mathrm{GL}}
\newcommand{\jury}{\diamond}

\title[Functional Calculi, Positivity, and Convolution of Matrices]{Functional Calculi, Positivity, and Convolution of Matrices}

\author[J.~Mashreghi, M.~Nasri, and P.K.~Vishwakarma]{Javad Mashreghi, Mostafa Nasri,\\ and Prateek Kumar Vishwakarma}

\address[Javad Mashreghi]{D\'epartement de math\'ematiques et de statistique, Universit\'e Laval, Qu\'ebec, QC, Canada G1V 0K6.}
\email{javad.mashreghi@ulaval.ca}

\address[Mostafa Nasri]{Department of Mathematics and Statistics, University of Winnipeg, Winnipeg, MB, Canada R3B 2E9}
\email{m.nasri@uwinnipeg.ca}

\address[Prateek Kumar Vishwakarma]{D\'epartement de math\'ematiques et de statistique, Universit\'e Laval, Qu\'ebec, QC, Canada G1V 0K6.}
\email{prateek-kumar.vishwakarma.1@ulaval.ca,~prateekv@alum.iisc.ac.in}

\date{\today}
\keywords{Convolution, operator monotone functions, functional calculus, matrix transform, positivity preserver, Bruhat order}

\subjclass[2020]{47B49, 47C05, 15B48}

\thanks{This work was supported by the Alliance and Discovery Grants of NSERC (Canada), and the Canada Research Chairs program.}

\begin{document}
\begin{abstract}
Convolution admits a natural formulation as a functional operation on matrices. Motivated by the functional and entrywise calculi, this leads to a framework in which convolution defines a matrix transform that preserves positivity. Within this setting, we establish results parallel to the classical theories of Pólya--Szegő, Schoenberg, Rudin, Loewner, and Horn in the context of entrywise calculus. The structure of our transform is governed by a Cayley--Hamilton-type theory valid in commutative rings of characteristic zero, together with a novel polynomial-matrix identity specific to convolution. Beyond these analytic aspects, we uncover an intrinsic connection between convolution and the Bruhat order on the symmetric group, illuminating the combinatorial aspect of this functional operation. This work extends the classical theory of entrywise positivity preservers and operator monotone functions into the convolutional setting.
\end{abstract}

\maketitle

\section{Introduction and historical background}

Functions on matrix spaces are called matrix transformations. They are pervasive across many areas of pure mathematics and the applied sciences. Of particular interest are those that preserve positivity, specifically the cone of positive semidefinite matrices. A transformation is called a positivity preserver if it maps positive semidefinite matrices into itself, thereby maintaining the spectral properties crucial in many fundamental, theoretical, and applied contexts. In this work, we look at convolution as a multiplication of matrices, formulate the corresponding matrix transforms, and classify the relevant positivity preservers. We begin with two classical motivations.

\subsection{Entrywise calculus}

Among the most extensively studied positivity preserving maps are the entrywise transforms. These are scalar functions extended to matrix spaces by operating on each matrix entry individually. The foundation of this theory dates back to the Schur Product Theorem \cite{Schur1911}: the entrywise product of positive semidefinite matrices (of the same size) is again positive semidefinte. This fundamental result immediately raises the natural question: which scalar functions preserve positivity when applied entrywise? Early insights into this question were provided by P\'olya and Szeg\H{o} \cite{polyaszego}, who observed that a convergent power series with nonnegative Maclaurin coefficients (also called absolutely monotonic) when applied entrywise, preserve positivity. With applications to metric geometry on the Hilbert sphere, the theory was significantly advanced by Schoenberg, who gave a complete characterization of these for continuous functions. This saw improvements in later work of Rudin (and several others in the recent times due to motivations from high-dimensional covariance estimations).

\begin{theorem}[Schoenberg \cite{Schur1911}, Rudin \cite{Rudin-Duke59}]\label{Tschoenberg}
Consider the interval $I=(-1,1)$ and a function $f:I\to\R$. The following are equivalent.
\begin{enumerate}[$(1)$]
    \item The matrix $f[A]:=(f(a_{ij}))\in \R^{n\times n}$ is positive semidefinite whenever $A=(a_{ij})\in I^{n\times n}$ is positive semidefinite, for all intergers $n\geq 1$.
    \item The function $f(x)\equiv \sum_{k=0}^{\infty}c_{k}x^{k}$ on $I$, where each $c_{k}\geq 0$.
\end{enumerate}
\end{theorem}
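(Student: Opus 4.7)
The plan is to handle the two implications separately.

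For $(2) \Rightarrow (1)$, the argument goes through the Schur product theorem. Writing $f[A] = \sum_{k=0}^\infty c_k A^{\circ k}$, where $A^{\circ k}$ denotes the entrywise Hadamard $k$-th power, iterated application of Schur's theorem shows each $A^{\circ k}$ is positive semidefinite. Since the cone of PSD matrices is closed under nonnegative linear combinations and under entrywise limits, and since $|a_{ij}| < 1$ guarantees entrywise convergence of the series on any fixed $n \times n$ matrix, $f[A]$ is PSD.

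The substantive content is $(1) \Rightarrow (2)$. I would proceed in two stages. First, \emph{establish regularity}: apply the positivity hypothesis to suitable families of $2 \times 2$ and $3 \times 3$ PSD matrices with varying parameters (rank-one matrices $u u^T$ together with small perturbations) and extract from $2 \times 2$ minors of $f[A]$ information about the local behaviour of $f$. Pushing this through yields local boundedness, then continuity, then $C^\infty$-smoothness, and finally real-analyticity on $I$; this is essentially Rudin's regularity bootstrap, which is what removes the continuity assumption from Schoenberg's original formulation.

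Second, \emph{identify the Taylor coefficients} $c_k := f^{(k)}(0)/k!$ as nonnegative. For each fixed $k$, I would construct a family of PSD test matrices $A_k(\epsilon)$, built from Hankel/moment or Vandermonde-type configurations depending on a parameter $\epsilon > 0$, such that a carefully chosen principal minor (or Schur complement) of $f[A_k(\epsilon)]$, expanded in powers of $\epsilon$, equals $c_k$ times a strictly positive factor plus higher-order corrections. Nonnegativity of the minor (forced by hypothesis) then yields $c_k \geq 0$ after dividing by the positive factor and letting $\epsilon \to 0$. Convergence of $\sum c_k x^k$ on all of $(-1,1)$ comes for free from the real-analyticity established in the regularity stage.

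The main obstacle is this final coefficient-extraction step. Isolating an individual Taylor coefficient requires test matrices engineered so that the contributions of $c_0, \ldots, c_{k-1}$ cancel in the chosen determinantal combination, leaving a leading term proportional to $c_k$. This typically entails a delicate combinatorial/determinantal analysis (via Cauchy--Binet or Vandermonde expansions) and is the technical heart of the result, whereas the regularity bootstrap and the Schur-product direction are comparatively routine.
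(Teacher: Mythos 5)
The paper itself does not prove Theorem~\ref{Tschoenberg}; it is quoted as classical background, so your proposal has to be judged against the known proofs. Your direction $(2)\Rightarrow(1)$ is correct and standard: Schur's product theorem gives positivity of each Hadamard power, and the PSD cone is closed under nonnegative combinations and entrywise limits.

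The gap is in $(1)\Rightarrow(2)$, and it is both a missing step and a misordering of the logic. A bootstrap from low-order minors can plausibly deliver local boundedness, monotonicity, continuity, and (with Boas--Widder-type input on divided differences in growing dimensions) eventually smoothness, but it cannot deliver real-analyticity: analyticity of a preserver is not a regularity phenomenon. In every known proof it is obtained from \emph{absolute monotonicity} --- one first shows $f^{(k)}\geq 0$ on $(0,1)$ for all $k$ (a Horn/Loewner-type argument in each fixed dimension, letting the dimension grow), and only then invokes Bernstein's theorem to conclude that $f$ agrees with a power series with nonnegative coefficients. Your plan inverts this: stage one asserts analyticity before any nonnegativity of derivatives is known, and stage two then claims convergence of $\sum_k c_k x^k$ on $(-1,1)$ ``comes for free'' from that analyticity. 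As written this is circular, or at best unsupported, and the coefficient-extraction construction you defer (test matrices whose minors isolate $c_k$ after cancelling $c_0,\dots,c_{k-1}$) --- which you yourself identify as the technical heart --- is never actually supplied; it is precisely the content of Horn's argument and of Rudin's construction, not a routine afterthought. Finally, even once one knows $f$ is absolutely monotonic on $(0,1)$, the theorem asserts $f(x)=\sum_{k\geq 0}c_k x^k$ on all of $(-1,1)$; extending the identity to negative arguments requires a further argument using PSD test matrices with negative entries (this step is explicit in Rudin's paper and in modern treatments), and your sketch does not address it at all. So the proposal is a reasonable outline of the modern route, but the decisive steps --- nonnegativity of all derivatives, Bernstein, and the extension across $0$ --- are missing or placed in the wrong logical order.
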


While Schoenberg's theorem applies to functions preserving positivity for matrices of any size, characterizing entrywise positivity preservers for matrices of a fixed dimension remains a difficult, largely unresolved problem (except for the dimension $2$ case resolved by Vasudeva \cite{vasudeva1979positive}). Horn \cite{horn1969theory} provided a crucial necessary condition that such functions must exhibit a certain degree of smoothness, with nonnegative derivatives. In a breakthrough publication, Belton, Guillot, Khare, and Putinar \cite{belton2016matrix} solved the problem for polynomials of degree at most $N$, showing how they preserve positivity on $N\times N$ matrices (and introduced the first known example of a non-absolutely monotone polynomial that preserves positivity in fixed dimensions). Khare and Tao \cite{khare2021sign} further explored the structure of these functions with connections to symmetric function theory. Additionally, several variants have also been explored, such as those involving structured matrices \cite{guillot2016critical,guillot2016preserving,belton2021moment}, specific functions \cite{fitzgerald1977fractional, guillot2015complete,guillot2016critical}, block actions \cite{vishwakarma2023positivity,guillot2015functions}, an algebraic setting of finite fields \cite{guillot2025positivity,vishwakarma2025cholesky,GuillotGuptaVishwakarmaYip2025FPSAC}, settings of inertia preservers \cite{belton2023negativity,GuillotGuptaVishwakarmaYip2025}, a noncommutative version \cite{pascoe2019noncommutative}, and several others. One may refer to the monograph by Khare \cite{khare2022matrix} for a more detailed exposition.

\subsection{Functional calculus} While entrywise transforms interact with matrices element-wise, another classical setting in which positivity preservers arise is the functional calculus for Hermitian matrices. In this framework, a function $f:\R\to\R$ is applied to a Hermitian matrix $A$ via its orthogonal spectral decomposition: if $A=UDU^*$ with $D=\diag(d_1,\dots,d_n)$, then $f(A):=Uf(D)U^*$, where $f(D):=\diag(f(d_1),\dots,f(d_n))$. In this setting, therefore, the positivity preservers refer to $f$ which are nonnegative on nonnegative reals. A more nuanced class of functions emerges when we restrict the requirement from positivity to monotonicity, which is the next ideal approach in the central concept of preserving the Loewner order. This leads to the well-studied operator monotone functions of Loewner. For Hermitian matrices $A,B$ of the same size, we say $A\leq B$ in the Loewner order $\leq$ if $B-A$ is positive semidefinite.

\begin{theorem}[Loewner \cite{lowner1934monotone}] Let $(a,b)\subseteq \R$ be an interval, and suppose $f:(a,b)\to \R$ is a function. Then the following are equivalent.
\begin{enumerate}[$(1)$]
    \item $f$ is operator monotone: $f(A)\leq f(B)$ for all Hermitian $A\leq B\in \C^{n\times n}$ with eigenvalues in $(a,b)$, for all integers $n\geq 1$.
    \item $f\in C^1((a,b))$, and the Loewner matrix $L_f\in \R^{k\times k}$ given by
    \begin{align*}
        (L_f)_{ij}:=
        \begin{cases}
        \frac{f(x_i)-f(x_j)}{x_i-x_j} & \mbox{if $i\neq j$ and}\\
        f'(x_i) & \mbox{otherwise},
        \end{cases}
    \end{align*}
    for all $a<x_1<\dots<x_k<b$, and all $k\geq 1$, is positive semidefinite.
\end{enumerate}
\end{theorem}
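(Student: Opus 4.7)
The two conditions are bridged by the classical Dalecki\v{\i}--Kre\u{\i}n formula for the directional derivative of a scalar function applied in the functional calculus sense. For a Hermitian $A=UDU^*$ with distinct eigenvalues $x_1,\dots,x_n\in(a,b)$ and any Hermitian $B$, this asserts
$$\frac{d}{dt}\Big|_{t=0} f(A+tB) \;=\; U\bigl(L_f \circ (U^*BU)\bigr)U^*,$$
where $L_f$ is the Loewner matrix of $f$ at $(x_1,\dots,x_n)$ and $\circ$ is the Hadamard product. The strategy is to establish this identity once and then extract both implications from it, with the Schur Product Theorem serving as the link to positivity.

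\textbf{Regularity (1)$\Rightarrow$ differentiability.} I would first show that operator monotonicity forces $f\in C^1((a,b))$. Specializing to $n=1$ yields scalar monotonicity, hence differentiability almost everywhere. A test with $2\times 2$ diagonal perturbations and nonnegative rank-one updates shows that the divided differences $(f(y)-f(x))/(y-x)$ are locally bounded and in fact continuous on the diagonal, whence $f$ is everywhere differentiable with continuous derivative. This step must come first because the Loewner matrix in (2) requires $f'$ at coincident points.

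\textbf{The implication (1)$\Rightarrow$(2).} With $f\in C^1$, I would derive the Dalecki\v{\i}--Kre\u{\i}n formula by first-order perturbation theory for simple eigenvalues, then argue as follows: if $B\geq 0$, the left-hand side is the right-derivative of a nondecreasing matrix-valued function at $0$, so it is PSD; thus $L_f \circ (U^*BU)\geq 0$ for every PSD $B$. Equivalently, $L_f$ is a Schur multiplier on the PSD cone. Taking $B$ so that $U^*BU$ is the all-ones matrix $J=ee^*$ (itself PSD) gives $L_f = L_f\circ J \geq 0$.

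\textbf{The implication (2)$\Rightarrow$(1).} Conversely, suppose every $L_f$ is PSD. Given $A\leq B$ with eigenvalues in $(a,b)$, set $g(t):=f(A+t(B-A))$ for $t\in[0,1]$. At each $t$ for which $A+t(B-A)$ has simple spectrum, the Dalecki\v{\i}--Kre\u{\i}n formula combined with the Schur Product Theorem (applied to $L_f(t)\geq 0$ and $U(t)^*(B-A)U(t)\geq 0$) shows $g'(t)\geq 0$. Integrating yields $f(B)\geq f(A)$.

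\textbf{Main obstacles.} Two technicalities require care. The first is establishing $C^1$ regularity purely from the operator-monotonicity hypothesis; the standard route goes through $2\times 2$ tests and a careful analysis of divided differences near the diagonal. The second is the degenerate-spectrum case in the converse direction: the Dalecki\v{\i}--Kre\u{\i}n identity is cleanest when eigenvalues are simple, so a density argument is needed to handle the measure-zero set of $t\in[0,1]$ where $A+t(B-A)$ has repeated eigenvalues, using continuity of both sides in the parameters and the fact that PSD is a closed condition.
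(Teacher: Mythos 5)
The paper does not prove this statement at all: it is quoted as classical background (Loewner \cite{lowner1934monotone}), with the reader referred to Donoghue's and Simon's books, so there is no in-paper argument to compare against. Judged on its own, your plan follows the standard Dalecki\v{\i}--Kre\u{\i}n/Schur-multiplier proof of exactly this finite-dimensional equivalence (the route in Bhatia's \emph{Matrix Analysis} and one of the proofs catalogued by Simon), and the architecture is sound: once $f\in C^1$ is known, the derivative formula plus the Schur Product Theorem gives both directions, and your choice $B=UJU^*$ with $J$ the all-ones matrix correctly extracts $L_f\geq 0$ from the multiplier condition.

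Two caveats. First, the genuine crux is the regularity step $(1)\Rightarrow f\in C^1$, which you only gesture at; ``divided differences are locally bounded and continuous on the diagonal'' is the conclusion of a nontrivial lemma (matrix monotonicity of order $2$ already forces $C^1$), usually proved either by a careful analysis of the $2\times2$ Loewner determinant condition $\bigl(\frac{f(x)-f(y)}{x-y}\bigr)^2\leq f'(x)f'(y)$ for smoothed versions of $f$, or by mollification (convolving $f$ with an approximate identity preserves operator monotonicity on a slightly smaller interval, and the $C^1$ bounds pass to the limit); as written this is a placeholder rather than a proof, and it is the one step that cannot be waved through. Second, in $(2)\Rightarrow(1)$ your ``measure-zero set of $t$'' claim needs care: the set where $A+t(B-A)$ has repeated eigenvalues is the zero set of a discriminant polynomial in $t$, so it is finite \emph{unless it is all of} $[0,1]$, and the fully degenerate case must be handled separately (e.g.\ by perturbing $B$ and using closedness of the PSD cone). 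A cleaner fix avoids the density argument altogether: for $f\in C^1$ the Dalecki\v{\i}--Kre\u{\i}n formula remains valid with repeated eigenvalues, interpreting the coincident divided differences as $f'$, and the resulting ``Loewner matrix with repetitions'' is a limit of Loewner matrices at distinct points, hence still positive semidefinite.
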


This characterization is equivalent to the function \( f \) admitting an analytic continuation to the upper half-plane that maps it into itself. Equivalently, \( f \) has an analytic extension to the domain \( (\mathbb{C} \setminus \mathbb{R}) \cup (a, b) \), which maps the upper half-plane into itself, with the extension to the lower half-plane obtained via reflection across the real axis. The theory of operator monotone functions has found numerous important applications~\cite{wigner1993significance, kubo1980means}, and over the years, this profound result has attracted significant interest. A multivariable generalization was recently developed by Agler, McCarthy, and Young \cite{agler2012operator}. For a detailed treatment of the classical Loewner theorem, see Donoghue’s book~\cite{donoghue2012monotone}, and for several alternative proofs, refer to Simon’s book~\cite{simon2019loewner}.\medskip

We just presented only a glimpse of a broader body of work on functional and entrywise calculi, within which positivity preserving transforms form a particularly rich and diverse area of study. At the heart of these developments lies the fundamental question of how scalar functions can \textit{meaningfully} be extended to matrix spaces in a manner that \textit{respects} the standard and entrywise multiplication of matrices. Building on this perspective, we introduce a new line of inquiry.

In this work, we focus on a matrix multiplication defined via convolution (Definition~\ref{defn:jury-prod}). Convolution of matrices, or matrix convolution, has deep classical roots in mathematics, arising in harmonic analysis of one variables, in integral transforms and operational calculus, in integral equations of the convolution type (Abel, Picard, Toeplitz and Wiener--Hopf type), probability theory, and the study of convolution algebras. At the same time, it also plays a central role in applied domains such as image and audio processing, deep learning, and numerical simulations, etc. A less widely recognized but crucial fact is that, much like the Schur Product Theorem, the convolution of positive semidefinite matrices is positivity semidefinite (Theorem~\ref{T:jury}).\medskip

\section{An overview of our main contributions}
In light of the above introduction and the explanations at the end, we are naturally led to the following fundamental questions: What is the appropriate analogue of functional and entrywise calculi for convolution of matrices? More precisely, how can a scalar function be extended to matrix spaces in a way that is compatible with convolution? Under what conditions does such an extension preserve matrix positivity?\medskip

These questions form the core motivation for our work and lead directly to the main contributions of this paper. Since the detailed discussion is technical (and natural), it is developed progressively in Sections~\ref{S:jury+comb+prob}, \ref{S:jury-prod-mat-trans}, \ref{S:pos-preservers}, and \ref{S:cayley-hamilton-thm}. We present {\bf seven main results}, stated as {\bf Theorems \ref{T:polya-szego-for-jury} through \ref{T:poly-on-mat}}, which we now outline.\medskip

\noindent \textit{Convolution as positivity preservers:} Viewing convolution as a matrix transform, we extend the theory of Schoenberg’s entrywise positivity preservers and Loewner’s operator monotone functions. The main results in this direction are the following.
\begin{description}
    \item[Theorem~\ref{T:polya-szego-for-jury}] is inspired by P\'olya and Szeg\H{o} \cite{polyaszego}, whose key observation led to entrywise calculus and positivity preservers. See Page \pageref{T:polya-szego-for-jury}.
    \item[Theorem~\ref{T:schoeberg-for-jury}] is in the spirit of Schoenberg’s Theorem~\ref{Tschoenberg}, which answered P\'olya and Szeg\H{o}'s question \cite{polyaszego} on absolute monotonicity. See Page \pageref{T:schoeberg-for-jury}.
    \item[Theorem~\ref{T:Rudin-for-jury}] parallels Rudin’s refinement \cite{Rudin-Duke59} of Schoenberg’s original proof \cite{schoenberg1942positive} given only for continuous functions. See Page \pageref{T:Rudin-for-jury}.
    \item[Theorem~\ref{T:horn-for-jury}] is an analogue of Horn’s theorem \cite{horn1969theory} in the harder fixed dimensional setting that refined Schoenberg’s theorem on absolute monotonicity. See Page \pageref{T:horn-for-jury}.
    \item[Theorem~\ref{T:fh-for-jury}] extends the fractional Hadamard powers of FitzGerald and Horn \cite{fitzgerald1977fractional} to convolution of matrices. See Page \pageref{T:fh-for-jury}.
\end{description}

\noindent \textit{Convolution as matrix transforms:} The previous part is naturally set against the formulation of the matrix transform that respects convolution of matrices. These transforms are given in Definitions~\ref{jury-transform-1} and \ref{jury-transform-2}, and the following results are crucial for their formulation.
\begin{description}
    \item[Theorem~\ref{main-thm-1}] is a Cayley--Hamilton-type result for convolution. See Page \pageref{main-thm-1}.
    \item[Theorem~\ref{T:poly-on-mat}] is a novel polynomial-matrix identity for convolution. See Page \pageref{T:poly-on-mat}.
\end{description}

\noindent \textit{Bruhat order and discrete probability:} We also study a natural connection between the convolution and the Bruhat order in Subsection~\ref{S:jury+bruhat}. Then, inspired by the probabilistic connection of the Schur Product Theorem (via a proof in {\cite{khare2022matrix}}), we show that the convolution yields a positivity-type implication on sums of discrete random variables in Subsection~\ref{S:jury+prob}.\medskip

Hence, in the end, by presenting convolution with the point of view of positivity and matrix algebra, we aim to stimulate further research and developments in structured, high-dimensional matrix analysis, and possibly with applications to combinatorics and discrete probability.

\subsection{Organization of the paper} This article investigates several properties of convolution of matrices, and is structured as follows. In Section~\ref{S:jury+comb+prob} we introduce it and present its properties; then discuss some key differences of it with the usual and the Schur (entrywise) product; then present a novel connection with the Bruhat order, and show an immediate application in probability. In Section~\ref{S:jury-prod-mat-trans}, we define the novel matrix transforms associated with convolution. Building on this, in Section~\ref{S:pos-preservers} we present the positivity preservers for our matrix transforms. Next, in Section~\ref{S:cayley-hamilton-thm} we show a Cayley--Hamilton-type theorem underlying the novel matrix transforms. In the penultimate Section~\ref{S:proof-pos-preservers} we present the proofs for our positivity preservers. Inspired by the open questions from this work, finally in Section~\ref{S:concluding} we discuss a qualitative interpretation of our transforms for power functions.

\section{Convolution of matrices and its properties}\label{S:jury+comb+prob}

We begin with a formal definition.

\begin{defn}[The matrix convolution]\label{defn:jury-prod}
Henceforth we use the following notations.
\begin{enumerate}[$(a)$]
    \item Define $[r:s]:=[r,s]\cap \mathbb{Z}$ for all $r\leq s\in \mathbb{R}$, and $[r:s]:=\emptyset$ if $r>s$. Let
    \[
    \mathcal{K}_{r,s}:=[0:r-1]\times [0:s-1] \quad \mbox{and}\quad \mathcal{K}_{r,s}^*:=\mathcal{K}_{r,s}\setminus\{(0,0)\}
    \]
    for all integers $r,s\geq 1$. (The notation $[r:s]$ is adopted from \cite{belton2023negativity}.)
    \item For integers $M,N\geq 1$, the convolution of matrices $A=(a_{ij})$ and $B=(b_{ij})\in \mathbb{C}^{M\times N}$, indexed by $(i,j)\in \mathcal{K}_{M\times N}$, is denoted by $A\jury B \in \mathbb{C}^{M\times N}$, and is defined by
\begin{equation*}
(A\jury B)_{ij}:=\sum_{\ell = 0}^{i} \sum_{\kappa=0}^{j} a_{\ell,\kappa}b_{i-\ell,j-\kappa} \quad \mbox{for all } (i,j)\in \mathcal{K}_{M,N}.
\end{equation*}
\end{enumerate}
\end{defn}
Some of the algebraic properties of matrix convolution may follow from the more general theory on convolution algebras. But for completeness we mention them here, purely in the setting of matrix spaces (which we later use in our proofs). The matrix convolution is associative, commutative, and compatible with scalar multiplication, matrix addition, and transposition:
\begin{eqnarray}\label{E:jury-property}
A\jury B=B\jury A, &&~ (A \jury B)^T = A^T \jury B^T,\\
(A\jury B)\jury C = A\jury (B\jury C),&&~ (A + B)\jury C = A\jury C + B\jury C,\nonumber\\
\alpha(A \jury B) = (\alpha A) \jury B = A \jury (\alpha B),&&~  \mbox{for all }A,B,C\in \C^{M\times N}, ~ \alpha\in \C.\nonumber
\end{eqnarray}
Moreover, it admits the following multiplicative identity element:
\begin{align}\label{E:jury-identity}
\I_{\jury}:=\begin{pmatrix}1\end{pmatrix}\oplus {\bf 0}_{(M-1)\times (N-1)} \in \C^{M\times N}.
\end{align}
Furthermore, matrix convolution admits mutliplicative inverses (which we state without proof): a matrix $A:=(a_{ij})\in (\C^{M\times N},\jury,+)$ has a multiplicative inverse if and only if $a_{00}\neq 0$. In which case, the inverse $A^{\jury -1}:=B=(b_{ij})\in \C^{M\times N}$ is unique, and is defined by:
\begin{align}\label{E:jury-inverse}
\begin{aligned}
b_{00}&:=a_{00}^{-1},\quad\mbox{and}\\
b_{ij}&:= -a_{00}^{-1} \Big{(} (A\jury B)_{ij} - a_{00}b_{ij} \Big{)}, \quad \mbox{for all } (i,j)\in \mathcal{K}_{M,N}^*,
\end{aligned}
\end{align}
such that all $b_{\ell,\kappa}$ for $(\ell,\kappa)\in \mathcal{K}_{i+1,j+1}\setminus \{(i,j)\}$ are defined \textit{before} defining $b_{ij}$. One may have noticed that this formulation of the inverse is inductive/algorithmic. We refer to Remark~\ref{rem:ch-thm-1} for its direct computation (as an application of our Cayley--Hamilton-type Theorem~\ref{main-thm-1}). These show that if $A,B\in \C^{M\times N}$ are invertible then $A\jury B$ is so, and moreover:
\begin{align}\label{E:jury-inverse-2}
(A\jury B)^{\jury -1} = A^{\jury -1}\jury B^{\jury -1}.
\end{align}

In summary, \(\mathbb{C}^{M \times N}\) forms a unital commutative ring under convolution together with the usual matrix addition. More generally, the same holds when \(\mathbb{C}\) is replaced by any unital commutative ring, however the existence of multiplicative inverses requires a field structure. Beyond these algebraic properties, matrix convolution also preserves matrix positivity.

\begin{defn}[Positive semidefinite matrix] For an integer $N\geq 1$, a matrix $A\in \C^{N\times N}$ with $A^*=A$ is called positive semidefinite if $z^*Az\geq 0$ for $z\in \C^N$. Moreover, $A$ is called positive definite if $z^*Az> 0$ for $z\in \C^N\setminus\{0\}$.
\end{defn}

\begin{theorem}[Jury \cite{jurythesis}]\label{T:jury}
Suppose $N\geq 1$ is an integer, and \(A,B\in\C^{N\times N}\) are positive (semi)definite. Then \( A \jury B \) is positive (semi)definite.
\end{theorem}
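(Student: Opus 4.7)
The plan is to reduce to the rank-one case by bilinearity and the spectral decomposition of $A$ and $B$, then to identify the convolution of rank-one positive semidefinite matrices with an explicit one-dimensional vector convolution $c = u * w$, from which rank-one positivity is immediate.

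Concretely, write $A = \sum_{k} u_k u_k^*$ and $B = \sum_{l} w_l w_l^*$ via the spectral theorem. By the distributivity of $\jury$ over addition and its compatibility with scalar multiplication recorded in \eqref{E:jury-property},
\[
A \jury B \;=\; \sum_{k, l} (u_k u_k^*) \jury (w_l w_l^*),
\]
so it suffices to analyze each rank-one convolution. A short calculation from Definition~\ref{defn:jury-prod} gives, for any $u, w \in \C^N$,
\[
\bigl((u u^*) \jury (w w^*)\bigr)_{ij} \;=\; \sum_{\ell = 0}^{i} \sum_{\kappa = 0}^{j} u_\ell\, \overline{u_\kappa}\, w_{i-\ell}\, \overline{w_{j-\kappa}} \;=\; c_i\, \overline{c_j},
\]
where $c \in \C^N$ is the one-dimensional convolution $c_i := \sum_{\ell = 0}^{i} u_\ell\, w_{i-\ell}$. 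Hence $(u u^*) \jury (w w^*) = c c^*$ is rank-one positive semidefinite, and summing back yields $A \jury B = \sum_{k,l} c_{kl}\, c_{kl}^*$, settling the positive semidefinite case.

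For the positive definite case, I would invoke the polynomial-ring interpretation: the map $v \mapsto P_v(x) := \sum_{i=0}^{N-1} v_i\, x^i$ identifies $\C^N$ with $R := \C[x]/(x^N)$ and converts vector convolution into multiplication in $R$. When $A$ and $B$ are positive definite, the decompositions can be chosen so that $\{u_k\}$ and $\{w_l\}$ are each bases of $\C^N$, equivalently $\{P_{u_k}\}$ and $\{P_{w_l}\}$ are bases of $R$. Writing $1 = \sum_l \beta_l P_{w_l}$ and expanding an arbitrary $f = \sum_k \gamma_k P_{u_k}$ in $R$ gives $f = \sum_{k,l} \gamma_k \beta_l\, P_{u_k} P_{w_l}$, so the products $\{P_{u_k} P_{w_l}\}$ span $R$. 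Translating back, $\{c_{kl}\}$ spans $\C^N$, whence $A \jury B = \sum_{k,l} c_{kl}\, c_{kl}^*$ has full rank and is positive definite.

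The only mildly delicate step is the positive definite case; the positive semidefinite case falls out immediately from the rank-one reduction. An alternative route for the strict case is to perturb and use that convolution with any matrix whose $(0,0)$-entry is nonzero is invertible by \eqref{E:jury-inverse}, but the polynomial-ring argument above is cleaner and more conceptual.
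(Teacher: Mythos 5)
The paper does not reproduce a proof of this theorem; it is stated as a cited result from Jury's 2002 thesis, where it was established via an operator-theoretic interpolation framework. So there is no in-paper proof to compare against. On its own terms, your argument is correct and self-contained, and it is a pleasant, elementary proof.

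The rank-one reduction is sound: writing $A=\sum_k u_ku_k^*$ and $B=\sum_l w_lw_l^*$ via the spectral theorem and using the bilinearity recorded in \eqref{E:jury-property}, the identity
\[
\bigl((uu^*)\jury(ww^*)\bigr)_{ij}
=\sum_{\ell=0}^{i}\sum_{\kappa=0}^{j}u_\ell\,\overline{u_\kappa}\,w_{i-\ell}\,\overline{w_{j-\kappa}}
=\Bigl(\sum_{\ell=0}^{i}u_\ell w_{i-\ell}\Bigr)\overline{\Bigl(\sum_{\kappa=0}^{j}u_\kappa w_{j-\kappa}\Bigr)}
=c_i\overline{c_j}
\]
factors perfectly, giving $(uu^*)\jury(ww^*)=cc^*$ and hence $A\jury B=\sum_{k,l}c_{kl}c_{kl}^*\succeq 0$. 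The strict case via $\C[x]/(x^N)$ is also correct: $v\mapsto P_v$ is a linear isomorphism taking $\jury$ of rank-one factors to polynomial multiplication, the spectral decomposition of a positive definite matrix gives $N$ orthogonal nonzero $u_k$ (resp.\ $w_l$), hence bases of $\C^N$ and of $R$; the factorization $f=f\cdot 1=\sum_{k,l}\gamma_k\beta_l P_{u_k}P_{w_l}$ then shows the $c_{kl}$ span $\C^N$, so $z^*(A\jury B)z=\sum_{k,l}|c_{kl}^*z|^2>0$ for $z\neq 0$.

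One small caveat: the alternative you sketch at the end, invoking $\jury$-invertibility via \eqref{E:jury-inverse}, does not by itself yield positive definiteness, since $\jury$-invertibility only detects $(A\jury B)_{00}\neq 0$, not invertibility in the usual sense. A clean perturbation alternative does exist, but it runs differently: for small $\epsilon>0$ one has $A-\epsilon\I_\jury\succeq 0$ (as $\I_\jury=e_0e_0^*$ is rank-one PSD and $A\succ 0$), so $A\jury B=(A-\epsilon\I_\jury)\jury B+\epsilon B$ is PSD plus PD, hence PD. If you keep the perturbation remark, it should be rephrased along those lines; otherwise your polynomial-ring argument is the right thing to present.
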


This result was proved in the 2002 Ph.D.\ thesis of M.~Jury, and developed as part of a unified approach towards classical Carathéodory--Fejér and Nevanlinna--Pick interpolation problems in complex function theory. In Jury’s approach, these problems are formulated as extension problems for positive matrices. For example, the Carathéodory interpolation problem~\cite{Car1907}, later studied by Schur~\cite{schur1917,schur1918}, asks for a holomorphic function on the unit disc whose Taylor expansion begins with a prescribed sequence and whose modulus remains bounded by one throughout the disc. To address such questions, Jury introduced a product defined relative to a partial order on a fiber bundle and employed it to solve the classical interpolation problems. His construction further provides a foundation for studying generalized versions of these problems. See the book of Agler and McCarthy~\cite{agler2002pick} for more background.

\begin{remark}[Recognition \& Nomenclature]
The motivation for developing an introduction to positivity preservers for matrix convolution -- in parallel with the well-established theory for the Schur (entrywise) product -- arises from Theorem~\ref{T:jury}. Proven more than two decades ago, this theorem, like the Schur Product Theorem \cite{Schur1911}, has its roots in applications to complex function theory. In recognition of its foundational role, we shall henceforth also refer to matrix convolution (or the convolution of matrices) as the \textit{Jury product}, and we will use these terms interchangeably.
\end{remark}

\subsection{Jury's product versus the standard and Schur products. I}\label{Sub:jury-v-schur}
A main distinction between Jury's product and the standard and Schur products is how it interacts with block diagonal matrices. Both, the standard and Schur products, preserve block diagonal structure, whereas the Jury product does not. More precisely, one can find matrices $A,B,P,Q$ of compatible dimensions such that
\[
\begin{pmatrix}
    A & {\bf 0}\\ {\bf 0} & B
\end{pmatrix} \jury
\begin{pmatrix}
    P & {\bf 0}\\ {\bf 0} & Q
\end{pmatrix} \;\neq\;
\begin{pmatrix}
    A\jury P & {\bf 0}\\ {\bf 0} & B\jury Q
\end{pmatrix}.
\]

Mainly as a consequence of the above, the Jury product also possesses another distinguishing property, arising from its Cayley–Hamilton theory, later shown in Subsection~\ref{R:CH-for-unbound-mat}. Such differences may underlie its surprising connections: on one hand, to a classical notion in algebraic geometry via a well-studied and widely applicable concept in combinatorics; and on the other, to a standard probabilistic framework for discrete random variables. We briefly explore these next.

\subsection{Jury's product and the Bruhat decomposition}\label{S:jury+bruhat}

For an integer $n \geq 1$, the seminal \emph{Bruhat decomposition} expresses the general linear group as a disjoint union of double cosets:
\[
\GL_n(\C) = \bigsqcup_{P \in \mathrm{P}_n} B P B,
\]
where $\mathrm{P}_n \subseteq \GL_n(\C)$ is the subgroup of permutation matrices (the Weyl group), and $B \subseteq \GL_n(\C)$ is the subgroup of upper triangular matrices (the Borel subgroup). Most notably, this yields a decomposition of the flag veriety. The \emph{full flag variety} of $\C^n$ is the set of complete flags
\(
\mathcal{F}_n(\C) := \{ F^{\bullet} : 0 = F^0 \subset F^1 \subset \cdots \subset F^n = \C^n, \ \dim F^i = i \}
\). It admits $\mathcal{F}_n(\C) \cong \GL_n(\C)/B$, and the Bruhat decomposition of $\GL_n(\C)$ induces a decomposition of the flag variety:
\[
\mathcal{F}_n(\C) = \bigsqcup_{P \in \mathrm{P}_n} BPB/B, \quad \mbox{where each }  C_{\omega(P)} := BPB/B
\]
is a \emph{Bruhat cell} (or \emph{Schubert cell}), indexed by the permutation $\omega(P) \in S_n$ corresponding to $P$. Moreover, a striking fundamental fact is that the Zariski closure of a Bruhat cell is a union of smaller cells, governed by the \emph{Bruhat order} $\leq_{\mathrm{Bruhat}}$ on permutations:
\[
\overline{C_{\omega(P)}} = \bigsqcup_{\substack{Q \in \mathrm{P}_n: \\ \omega(Q) \leq_{\mathrm{Bruhat}} \omega(P)}} C_{\omega(Q)}.
\]

The Bruhat order, originally defined via closure relations among double cosets $BwB$ in reductive groups, provides the combinatorial meaning for how Schubert cells fit together in flag varieties. The same structure extends from the symmetric group to Weyl groups and, more generally, to arbitrary Coxeter groups, where it admits the algebraic \emph{subword property}. In the case of the symmetric group, the Bruhat order also admits a combinatorial description in terms of \emph{Young tableaux}, offering an alternative and more visual perspective on the same inclusion relations among Schubert cells. These connections were developed in the classical works \cite{Chevalley1955, Bruhat1956, Tits1960, Bourbaki1968}, and received modern treatments in \cite{BjornerBrenti2005, Humphreys1990}.

Adding to the discourse above, recent works have approached the Bruhat order through notions of positivity. For instance, the main theorem of \cite{drake2005monomial} shows that permutations $\omega \leq_{\mathrm{Bruhat}} \mu$ if and only if the difference of monomials, which are also sometimes called generalized diagonals,
\[
x_{1,\omega(1)} \cdots x_{n,\omega(n)} - x_{1,\mu(1)} \cdots x_{n,\mu(n)}
\]
satisfies a variety of positivity conditions, where ${\bf x}=(x_{ij})_{i,j=1}^n$ is a matrix of indeterminates. This reveals an interplay between the combinatorial Bruhat order and the algebraic notions of positivity, with applications in combinatorics, representation theory, and linear algebra \cite{drake2004two, drake2005monomial}.

In the context above -- and noting that determining comparability in the Bruhat order is often tedious -- we present a direct connection between the Jury product (the matrix convolution) and the Bruhat order on the symmetric group.

\begin{theorem}\label{T:Jury+Bruhat}
Let $n \geq 1$, and suppose for permutation matrices $P,Q \in \GL_n(\C)$, we use $\omega(P),\omega(Q)\in \mathrm{S}_n$ to denote the respective permutations. Then all of the following relations are equivalent.
\begin{align*}
\omega(Q) \leq_{\mathrm{Bruhat}} \omega(P) ~\ ~
& P\jury \mathbf{1}_{n\times n} \leq_{\mathrm{Entry}} Q \jury \mathbf{1}_{n\times n} \\
\omega(\nabla_n) \omega(P) \leq_{\mathrm{Bruhat}} \omega(\nabla_n) \omega(Q) ~\ ~& (\nabla_n Q)\jury \mathbf{1}_{n\times n} \leq_{\mathrm{Entry}} (\nabla_n  P) \jury \mathbf{1}_{n\times n}\\
\omega(P)\omega(\nabla_n) \leq_{\mathrm{Bruhat}} \omega(Q)\omega(\nabla_n) ~\ ~& (Q  \nabla_n)\jury \mathbf{1}_{n\times n} \leq_{\mathrm{Entry}} (P  \nabla_n) \jury \mathbf{1}_{n\times n} \\
\omega(Q)^{-1} \leq_{\mathrm{Bruhat}} \omega(P)^{-1} ~\ ~ & P^T\jury \mathbf{1}_{n\times n}\leq_{\mathrm{Entry}}Q^T \jury \mathbf{1}_{n\times n}
\end{align*}
Here $\nabla_n$ is the $n\times n$ anti-diagonal permutation matrix (that reverses the rows of $P,Q$); $\mathbf{1}_{n\times n}$ is the $n \times n$ matrix whose entries are all equal to $1$;  $\leq_{\mathrm{Entry}}$ denotes the partial order defined by entrywise comparison of matrices.
\end{theorem}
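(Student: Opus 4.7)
The plan is to reduce all four equivalences to a single key identity: the Jury product of any $n\times n$ matrix $P$ with $\mathbf{1}_{n\times n}$ records its partial sums on upper-left rectangles, and when $P$ is a permutation matrix these are exactly the quantities used in the rank-function description of the Bruhat order.

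I would begin with the first (leftmost) equivalence. From Definition~\ref{defn:jury-prod} one has, for any $P\in\C^{n\times n}$ and $(i,j)\in\mathcal{K}_{n,n}$,
\[
(P\jury \mathbf{1}_{n\times n})_{ij}=\sum_{\ell=0}^{i}\sum_{\kappa=0}^{j}P_{\ell\kappa}.
\]
When $P$ is a permutation matrix corresponding to $\omega=\omega(P)\in\sn$, this is the number of $1$'s in its upper-left $(i+1)\times(j+1)$ block, that is, the classical rank value $r_\omega(i,j):=\#\{k\leq i+1:\omega(k)\leq j+1\}$. The standard rank-function criterion for the Bruhat order (as in \cite{BjornerBrenti2005}) states that $\sigma\leq_{\mathrm{Bruhat}}\tau$ if and only if $r_\sigma(i,j)\geq r_\tau(i,j)$ for every $(i,j)$. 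Applying this with $\sigma=\omega(Q)$ and $\tau=\omega(P)$ gives the first equivalence, and the reversal of inequality direction (Bruhat $\leq$ on $(Q,P)$ versus entrywise $\leq$ on $(P,Q)$) is built into the criterion itself.

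The remaining three equivalences then come out uniformly by re-applying the first equivalence to appropriately transformed permutation matrices. The matrices $\nabla_n P$, $P\nabla_n$ and $P^T$ are themselves permutation matrices representing $\omega(\nabla_n)\omega(P)$, $\omega(P)\omega(\nabla_n)$ and $\omega(P)^{-1}$ respectively, so the first equivalence transfers each entrywise comparison in the right-hand column of the theorem into a Bruhat comparison between the corresponding permutations. To conclude, I would invoke three standard symmetries of $(\sn,\leq_{\mathrm{Bruhat}})$: left and right multiplication by the longest element $w_0=\omega(\nabla_n)$ are order-reversing anti-automorphisms, whereas the inversion map $\omega\mapsto\omega^{-1}$ is an order-preserving automorphism. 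These three facts account exactly for the reversed direction of the Bruhat inequalities in the rows involving $\nabla_n$ and for the unreversed direction in the row involving transposition.

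The only real subtlety is bookkeeping: one must verify that under whatever multiplication convention the paper uses for $\omega(PQ)$, the matrix operations $P\mapsto \nabla_n P$, $P\mapsto P\nabla_n$, $P\mapsto P^T$ correspond to the permutation operations $w_0\omega$, $\omega w_0$, $\omega^{-1}$, and that the direction of each "$\leq_{\mathrm{Entry}}$" in the theorem matches the order-reversing behavior of $w_0$ and the order-preserving behavior of inversion on $\sn$. This is routine once a convention is fixed. Hence the entire substance of the theorem is concentrated in the single identification of $(P\jury \mathbf{1}_{n\times n})_{ij}$ with the rank data of $\omega(P)$, together with the classical rank-function characterization of Bruhat comparability.
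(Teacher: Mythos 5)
Your proposal is correct and proves the theorem, but it takes a genuinely different route from the paper. You prove each row's left--right equivalence by applying the rank-function characterization (the first equivalence) to the transformed permutation matrices $\nabla_n P$, $P\nabla_n$, $P^T$, and then you link the four left-column statements via the standard Bruhat-order symmetries of $w_0$ and inversion. The paper instead treats the left-column equivalences as known (this is stated explicitly in the remark following the theorem), cites \cite{drake2005monomial} for the first equivalence, and then proves the four right-column statements equivalent to one another directly by establishing concrete Jury-product identities, namely
\[
j+1=\big(A\jury\mathbf{1}_{n\times n}\big)_{i,j}+\big((\nabla_nA)\jury\mathbf{1}_{n\times n}\big)_{n-1-i,j},
\qquad
i+1=\big(A\jury\mathbf{1}_{n\times n}\big)_{i,j}+\big((A\nabla_n)\jury\mathbf{1}_{n\times n}\big)_{i,n-1-j},
\]
together with $(A\jury B)^T=A^T\jury B^T$. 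Your approach is slightly more uniform and economical once the rank criterion is granted, since you never need to compute how $(\nabla_nA)\jury\mathbf{1}_{n\times n}$ relates to $A\jury\mathbf{1}_{n\times n}$. The paper's approach buys the explicit complementary-sum identities above, which show a duality inside the Jury product itself and are of independent interest in a paper whose focus is precisely on the algebraic structure of $\jury$; it also keeps the proof self-contained on the matrix side, using the permutation side only once (and flagging the needed Bruhat-order facts as classical rather than re-deriving them). Your remark about checking the multiplication convention for $\omega(PQ)$ and the order-reversing direction of $w_0$ is exactly the right caution; nothing substantive is missing.
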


\begin{remark}[About our proof]
The equivalent partial order relations in Theorem~\ref{T:Jury+Bruhat} are presented in two columns: on the left with permutations $\omega(P),\omega(Q)$, and on the right with the corresponding permutation matrices $P,Q$. The equivalence on the permutation side is well known from the theory of automorphisms of the Bruhat graphs and digraphs, as $\omega(\nabla_n)$ refers to the \textit{longest element} $n~(n-1)~\dots~2~1 \in \mathrm{S}_n$. What remains and what we establish here are the remaining equivalences using properties of the Jury product and permutation matrices.
\end{remark}

\begin{proof}[Proof of Theorem~\ref{T:Jury+Bruhat}]

First, the equivalence between $\omega(Q) \leq_{\mathrm{Bruhat}} \omega(P)$ and $P\jury \mathbf{1}_{n\times n} \leq_{\mathrm{Entry}} Q \jury \mathbf{1}_{n\times n}$ can be deduced from {\cite[Theorem~1]{drake2005monomial}} (and more details can be found in {\cite[Theorem~2.1.5]{BjornerBrenti2005}} and {\cite[Section 10.5]{Fulton1997}}).

Next, it is not difficult to see that the following holds for all permutation matrices $A\in \GL_n(\C)$ indexed by $i,j\in \mathcal{K}_{n,n}$:
\begin{align}\label{E:T:bruhat-1}
j+1=(A\jury \mathbf{1}_{n\times n})_{n-1,j}=\big{(}A\jury {\bf 1}_{n\times n}\big{)}_{i,j} + \big{(}(\nabla_n A)\jury {\bf 1}_{n\times n}\big{)}_{n-1-i,j}.
\end{align}
Then using the above, switching $A$ by $A^T$, and $i$ and $j$, we get:
\begin{align*}
i+1&=\big{(}A^T\jury {\bf 1}_{n\times n}\big{)}_{j,i} + \big{(}(\nabla_n A^T)\jury {\bf 1}_{n\times n}\big{)}_{n-1-j,i}\\
&=\big{(}A^T\jury {\bf 1}_{n\times n}^T\big{)}_{j,i} + \big{(}(A\nabla_n)^T\jury {\bf 1}_{n\times n}^T\big{)}_{n-1-j,i}.
\end{align*}
The above, combined with the properties of the Jury product \eqref{E:jury-property}, gives:
\begin{align}\label{E:T:bruhat-2}
i+1&=\big{(}A\jury {\bf 1}_{n\times n}\big{)}^T_{j,i} + \big{(}(A\nabla_n)\jury {\bf 1}_{n\times n}\big{)}^T_{n-1-j,i}\nonumber \\
&=\big{(}A\jury {\bf 1}_{n\times n}\big{)}_{i,j} + \big{(}(A \nabla_n)\jury {\bf 1}_{n\times n}\big{)}_{i,n-1-j}.
\end{align}

Then, to obtain the equivalence of the first three statements on the right side, write identities \eqref{E:T:bruhat-1} and \eqref{E:T:bruhat-2} for $P,Q$ and compare the entries.

Then, for the final equivalence, we use a property \eqref{E:jury-property} of the Jury product that
\((A\jury B)^T = A^T \jury B^T\), together with the observation that the linear map \(A \mapsto A^T\) preserves the entrywise order.
\end{proof}

We note that this brief discussion of the matricial and combinatorial perspectives of the Bruhat order is only the tip of the iceberg, and numerous related works can be found through a simple literature search. For instance, it is natural to ask whether the matrix viewpoint on the Bruhat order extends beyond permutation matrices to broader families, and whether such extensions reveal new phenomena. A recent contribution in this direction is to total positivity, where the authors show that a rational function in \textit{permanents} admits a uniform upper bound over totally positive matrices if and only if a pair of carefully constructed ``index'' matrices -- which need not be permutations -- are comparable in the Bruhat order {\cite[Theorem~3.3]{skandera2025permanental}}.

\subsection{Jury's product and sums of discrete random variables}\label{S:jury+prob} This is inspired from the probabilistic connection of the Schur Product Theorem, via its proof in {\cite[Theorem~2.10]{khare2022matrix}}. Consider a random variable $X$ supported on the two-dimensional integer grid $\Z_{\ge 0} \times \Z_{\ge 0}$. Let $A(X)$ denote its semi-infinite probability matrix, indexed by $\mathcal{K}_{\infty,\infty} := [0:\infty) \times [0:\infty)$, with entries
\[
A(X)_{ij} := \mathrm{Prob}(X = (i,j)) \quad \text{for all } (i,j) \in \mathcal{K}_{\infty,\infty}.
\]
These semi-infinite matrices encode the full probability distribution of $X$, and their Jury product is defined analogously to Definition~\ref{defn:jury-prod}. Moreover, $A(X)$ provides a direct formula, via the Frobenius inner product, for computing both the expectation and the covariance matrix of $X$. While it is well-known that the covariance matrix of any multivariate random variable is positive semidefinite, Theorem~\ref{T:jury} yields an unconventional property: if the probability matrices of random variables $X$ are positive semidefinite, then so are the matrices corresponding to their finite sums.

\begin{cor}\label{C:Jury+prob+pos}
Let $X_1,\dots,X_n$ be independent random variables supported on $\Z_{\ge 0} \times \Z_{\ge 0}$. Then, we know that the probability matrix of their sum is given by convolution:
\[
A(X_1+\cdots+X_n) = A(X_1) \jury \cdots \jury A(X_n).
\]
Hence, by Theorem~\ref{T:jury}, if $A(X_1), \dots, A(X_n)$ are positive semidefinite, then $A(X_1+\cdots+X_n)$ is also positive semidefinite. Here, a semi-infinite matrix is positive semidefinite if all its leading principal submatrices are positive semidefinite.
\end{cor}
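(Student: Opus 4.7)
The plan is to split the proof into two largely independent parts: first, establish the convolution identity for the probability matrices by a standard independence/disjoint-decomposition argument; and second, deduce positive semidefiniteness by verifying it on every leading principal submatrix, using that truncation commutes with the Jury product so that Theorem~\ref{T:jury} applies directly.

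For the convolution identity, I would begin with the case $n=2$. If $X_1,X_2$ are independent random variables valued in $\Z_{\ge 0}\times \Z_{\ge 0}$, the event $\{X_1+X_2=(i,j)\}$ decomposes as the disjoint union of $\{X_1=(\ell,\kappa),\,X_2=(i-\ell,j-\kappa)\}$ as $(\ell,\kappa)$ ranges over $\mathcal{K}_{i+1,j+1}$, precisely because both summands are non-negative. Independence then gives
\[
\mathrm{Prob}(X_1+X_2=(i,j)) \;=\; \sum_{\ell=0}^{i}\sum_{\kappa=0}^{j} A(X_1)_{\ell,\kappa}\,A(X_2)_{i-\ell,j-\kappa},
\]
which matches $(A(X_1)\jury A(X_2))_{ij}$ exactly as in Definition~\ref{defn:jury-prod}. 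A short induction on $n$, using associativity from \eqref{E:jury-property} together with the independence of $X_1+\cdots+X_{n-1}$ from $X_n$, extends this to arbitrary $n\ge 1$.

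For the positivity claim, the key point is that Jury's product respects truncation. Fix $N\ge 1$ and write $A^{(N)}$ for the leading $N\times N$ block of a semi-infinite matrix $A$. Since $(A\jury B)_{ij}$ for $(i,j)\in \mathcal{K}_{N,N}$ involves only entries $A_{\ell,\kappa}$ and $B_{i-\ell,j-\kappa}$ with both indices in $\mathcal{K}_{N,N}$, one has
\[
\bigl(A\jury B\bigr)^{(N)} \;=\; A^{(N)}\jury B^{(N)}.
\]
Iterating this, the leading $N\times N$ block of $A(X_1)\jury\cdots\jury A(X_n)$ coincides with $A(X_1)^{(N)}\jury\cdots\jury A(X_n)^{(N)}$. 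By hypothesis each factor is positive semidefinite, so $n-1$ applications of Theorem~\ref{T:jury} render this block positive semidefinite. Since $N$ was arbitrary, $A(X_1+\cdots+X_n)$ is positive semidefinite in the semi-infinite sense.

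There is no genuinely hard step here; the one point that merits care is the truncation–convolution compatibility, which is precisely what allows the finite-dimensional Theorem~\ref{T:jury} to pass to the semi-infinite setting without additional summability or trace-class hypotheses.
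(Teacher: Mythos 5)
Your proposal is correct and follows the same route the paper intends: the convolution identity via independence and the nonnegative-support decomposition, and positivity by noting the leading $N\times N$ block of a Jury product depends only on (and equals the Jury product of) the leading $N\times N$ blocks, so Theorem~\ref{T:jury} applies blockwise. You have simply written out the details that the paper leaves implicit in the statement of Corollary~\ref{C:Jury+prob+pos}.
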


\section{Convolution as matrix transforms}\label{S:jury-prod-mat-trans}

If we look back at the induced action of polynomials over matrices for the standard matrix product and the Schur (entrywise) product over $\C^{N\times N}$, then for $p(z)=a_0+a_1z+\cdots+a_nz^n \in \C[z]$, we have
\begin{align*}
    p(A)&:=a_0\I +a_1 A + a_2 A^2 + \dots + a_n A^n, \qquad \mbox{(standard)}\\
    p[A]&:=a_0\I_{\circ} +a_1 A + a_2 A^{\circ 2} + \dots + a_n A^{\circ n}, \qquad \mbox{(Schur)}
\end{align*}
where $\I\in \C^{N\times N}$ is the standard identity matrix, $\I_{\circ}:=(1)_{N\times N}$ is the identity of the Schur product, and $A^{\kappa}=A\cdot\dots\cdot A$ and $A^{\circ \kappa}=A\circ \dots\circ A$ are defined, respectively, by the standard and the Schur product. Therefore, it is reasonable to define the corresponding action of polynomials on every $A\in \C^{M\times N}$ equipped with matrix convolution (the Jury product) by
\begin{equation}\label{defn:p-of-A-for-jury}
p_{\jury}(A) := a_0 \I_{\jury} +a_1 A + a_2 A^{\jury 2} + \dots + a_n A^{\jury n},
\end{equation}
where $A^{\jury \kappa}:=A\jury \cdots \jury A ~\mbox{($\kappa$-times)}$, in which $A^{\jury 0}:=\I_{\jury}$ (defined in~\eqref{E:jury-identity}). Thus, using the properties in \eqref{E:jury-property}, for $p,q\in \C[z]$, we have
\begin{align}\label{E:poly-act-prop}
p_{\jury}(A)\jury q_{\jury}(A) = (pq)_{\jury}(A) \quad \mbox{and}\quad (p+q)_{\jury}(A)=p_{\jury}(A)+q_{\jury}(A),
\end{align}
where $(pq)(z):=p(z)q(z)$ and $(p+q)(z):=p(z)+q(z)$.

In contrast to the Schur product, the entries of $p_{\jury}(A)$ may not be a straightforward computation, and as we later show, it requires an object from combinatorics, called partitions. A partition of a positive integer is a way of writing it as a sum of positive integers. This concept has deep connections to number theory and representation theory. For our requirement, we consider partitions of pairs of integers in $\mathcal{K}_{M,N}^*$.

\begin{defn}[Partitions over $\mathcal{K}_{M,N}^*$]\label{defn:partitions}
For integers $M,N,\ell\geq 1$ and $(i,j)\in \mathcal{K}_{M,N}^*$, define
\[
\mathcal{P}_\ell^*(i,j):=\{\mbox{multiset $S\subseteq \mathcal{K}_{M,N}^*$ of $\ell$ elements $: \sum_{(p,q)\in S}(p,q)=(i,j)$}\}.
\]
Define $c_S(p,q)$ to denote the cardinality of $(p,q)\in S$, for all $\ell,(i,j)$.
\end{defn}

We show later in Theorem~\ref{T:poly-on-mat} exactly how the map
\(p_{\jury}(-): \C^{M \times N} \to \C^{M \times N}\)
is defined in terms of the matrix entries as an extension of a polynomial \(p: \C \to \C\) that meaningfully preserves matrix convolution (Jury product).
With this extension, it is then possible to define a corresponding transform
for any sufficiently differentiable function, as formalized next.

\begin{defn}[Transforms for regular functions]\label{jury-transform-1}
Fix integers $M,N\geq 1$, and let $\F=\C$ or $\R$. For a subset $I\subseteq \F$ and a function $f:I\to \F$ which is $M+N-2$ times differentiable, we define a matrix transform
\[
f_{\jury}(-):I^{M\times N}\to \F^{M\times N}
\]
where for any $A=(a_{ij})\in I^{M\times N}$, the matrix $f_{\jury}(A)\in \F^{M\times N}$ is defined as follows:
\begin{align*}
\big{(}f_{\jury}(A)\big{)}_{00}&:=f(a_{00}), \mbox{ and}\\
\big{(}f_{\jury}(A)\big{)}_{ij}&:=
\sum_{\ell=1}^{i+j} f^{(\ell)}(a_{00})
\sum_{S\in\mathcal{P}_{\ell}^*(i,j)}
\frac{1}{\prod_{(m,n)\in \mathcal{K}_{M,N}^*} c_S(m,n)!}
\prod_{(p,q)\in S} a_{p,q},
\end{align*}
for all $(i,j)\in \mathcal{K}_{M\times N}^*$. Here the product over multiset $S$ is `with' multiplicity.
\end{defn}

Note, $f_\jury(-)$ is a matrix-valued extension of a scalar function $f$, tailored to preserve the structure of convolution (that is clearly unlike the functional and entrywise calculi). This defines our transforms for functions that are sufficiently differentiable, as the construction for polynomials depends on derivatives at a matrix entry (Theorem~\ref{T:poly-on-mat}). We extend this further.

One of our goals is to examine positivity preservers for these matrix transforms. Therefore, as the first entry in $f_{\jury}(A)$ is $f(a_{00})$, a positivity preserver $f_{\jury}(-)$ must be such that $f$ maps nonnegative real numbers to itself. In light of this -- and the wide-spread application of forward difference operators in pure and applied mathematics -- we update Definition~\ref{jury-transform-1}, and define matrix transforms for \textit{any} function, via divided difference operators.

\begin{defn}[Divided differences]\label{defn:div-diff} Under the premise in Definition~\ref{jury-transform-1}, let $I\subseteq \F$ and $f:I\to\F$. Given $h > 0$ and an integer $\ell \geq 0$ such that $x+k h\in I$ for $k\in [0:\ell]$, the $\ell$-th order forward differences with step size $h>0$ are defined as follows:
\begin{align*}
    (\Delta_{h}^{0}f)(x)&:=f(x),\\
    (\Delta_{h}^{\ell}f)(x)&:=(\Delta_{h}^{\ell-1}f)(x+h)-(\Delta_{h}^{\ell-1}f)(x)\\
    &=\sum_{j=0}^{\ell}\binom{\ell}{j}(-1)^{\ell-j}f(x+jh), \quad \mbox{whenever }\ell>0.
\end{align*}
Similarly, the $\ell$-th order divided differences with step size $h>0$ are
\begin{align*}
    (D_{h}^{\ell}f)(x):=\frac{1}{h^{\ell}}(\Delta_{h}^{\ell}f)(x) \quad \mbox{for all } \ell\geq 0.
\end{align*}
\end{defn}

\begin{defn}[Transforms for any function]\label{jury-transform-2}
Under the premise in Definition~\ref{jury-transform-1}, for a subset $I\subseteq \F$ and a function $f:I\to \F$, we define a matrix transform for step size $h>0$,
\[
f_{\jury}(-)_{h}:I^{M\times N}\to \F^{M\times N}
\]
where for a matrix $A=(a_{ij})\in I^{M\times N}$, the matrix $f_{\jury}(A)_{h}\in \F^{M\times N}$ is as follows: provided $a_{00}+kh\in I$ for $k\in [0:M+N-2]$, define
\begin{align*}
\big{(}f_{\jury}(A)_h\big{)}_{00}&:=f(a_{00}), \mbox{ and}\\
\big{(}f_{\jury}(A)_{h}\big{)}_{ij}&:=
\sum_{\ell=1}^{i+j} (D_h^{\ell}f)(a_{00})
\sum_{S\in\mathcal{P}_{\ell}^*(i,j)} \frac{1}{\prod_{(m,n)\in \mathcal{K}_{M,N}^*} c_S(m,n)!}
\prod_{(p,q)\in S} a_{p,q}
\end{align*}
for all $(i,j)\in \mathcal{K}_{M\times N}^*$. Here the product over multiset $S$ is `with' multiplicity.
\end{defn}

\begin{remark}\label{rem:jury-transform-2}

Some immediate remarks are in order.

\begin{enumerate}
    \item One notes that the coefficients inside the inner sum in Definitions \ref{jury-transform-1} and \ref{jury-transform-2} are the multinomial coefficients times $1/\ell!$.
    \item Notice that $f_{\jury}(-)_h$ is naturally a ``weaker'' form of $f_{\jury}(-)$. Therefore, to work with it, ensuring that $f_{\jury}(-)_h$ remains well-defined, we ``strengthened'' it by allowing \textit{all possible} step sizes $h>0$. However, if one already knows that $f \in C^{M+N-2}$, then taking limit yields the (possibly desirable) smooth version (for open $I$; see \cite[Section~3.4]{davis1975interpolation}):
\begin{align}\label{E:taking-lim}
\lim_{h\to 0^+} (D^{\ell}_h f)(a_{00}) = f^{(\ell)}(a_{00}) \implies \lim_{h\to 0^+} f_{\jury}(A)_{h} = f_{\jury}(A).
\end{align}
    \item Finally, our proofs of the positivity preservers (discussed later) will show the ``full extent'' -- i.e., all possible $h>0$ in Definition~\ref{jury-transform-2} -- of $f_{\jury}(-)_{h}$ being used.
\end{enumerate}
\end{remark}

With this summary of the algebraic properties of the Jury product or matrix convolution and its extension as matrix transforms, we are now prepared to discuss its behavior over the cone of positive semidefinite matrices, as well as the positivity preserving properties of the matrix transforms.

\section{Convolution as positivity preservers}\label{S:pos-preservers}

The positive semidefinite matrices in $\C^{N\times N}$ form a convex cone which is further closed under taking entrywise limits. Thus, Theorem~\ref{T:jury} yields that whenever $A \in \C^{N\times N}$ is positive semidefinite,
\[
A^{\jury 0}=\I_{\jury},\quad A^{\jury 1}=A,\quad A^{\jury 2}=A\jury A,\quad  A^{\jury k}=A\jury \cdots \jury A, \quad \sum_{k=0}^{\infty}c_k A^{\jury k}
\]
are all positive semidefinite, for nonnegative real numbers $c_0,c_1,c_2,\dots$, provided there is necessary convergence. This is a P\'olya and Szeg\H{o}-type formulation, for a matrix transform rooted in convolution.

\begin{utheorem}\label{T:polya-szego-for-jury}
Suppose \( f(x) := \sum_{k=0}^{\infty} c_k x^k \), with \( c_k \geq 0 \) for all \( k \), is defined within the domain of convergence $I\subseteq \R$. Then $f_{\jury}(A)$ is positive semidefinite, for all positive semidefinite $A\in I^{N\times N}$, and for all $N\geq 1$.
\end{utheorem}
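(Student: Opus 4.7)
The plan is to reduce the claim to the polynomial case and then pass to the limit via closure of the positive semidefinite cone under entrywise convergence. The key bridge is the forthcoming polynomial-matrix identity of Theorem~\ref{T:poly-on-mat}, which identifies the combinatorial formula of Definition~\ref{jury-transform-1}, applied to a polynomial $p$, with the algebraic action $p_\jury(A) = a_0 \I_\jury + a_1 A + \cdots + a_n A^{\jury n}$ from equation \eqref{defn:p-of-A-for-jury}.

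First I would handle polynomials $p(x) = \sum_{k=0}^n c_k x^k$ with $c_k \geq 0$. Invoking Theorem~\ref{T:poly-on-mat}, one has $p_\jury(A) = c_0 \I_\jury + c_1 A + c_2 A^{\jury 2} + \cdots + c_n A^{\jury n}$. The identity $\I_\jury$ is rank-one positive semidefinite (being of the form $vv^T$ with $v$ the first standard basis vector), and each iterated convolution $A^{\jury k}$ is positive semidefinite by induction on $k$, using Jury's Theorem~\ref{T:jury}. Because the positive semidefinite cone is convex and each $c_k \geq 0$, $p_\jury(A)$ is positive semidefinite.

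Next I would treat the full series $f(x) = \sum_{k=0}^\infty c_k x^k$. Let $s_N(x) := \sum_{k=0}^N c_k x^k$ denote the partial sums, so that every $(s_N)_\jury(A)$ is positive semidefinite by the polynomial step. To pass to the limit, observe that by Definition~\ref{jury-transform-1} the entry $\bigl(f_\jury(A)\bigr)_{ij}$ is a \emph{finite} linear combination (indexed by $\ell = 1, \ldots, i+j$) of the derivatives $f^{(\ell)}(a_{00})$, with coefficients depending only on the off-corner entries $a_{p,q}$ and on the partitions $S \in \mathcal{P}_\ell^*(i,j)$; the partial-sum matrix $(s_N)_\jury(A)$ has precisely the same entry formula but with $s_N^{(\ell)}(a_{00})$ in place of $f^{(\ell)}(a_{00})$. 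Since termwise differentiation of a power series within its domain of convergence preserves convergence, $s_N^{(\ell)}(a_{00}) \to f^{(\ell)}(a_{00})$ for every fixed $\ell$. Hence $(s_N)_\jury(A) \to f_\jury(A)$ entrywise, and because the positive semidefinite cone is closed under entrywise limits, $f_\jury(A)$ is positive semidefinite.

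The main obstacle I anticipate is the polynomial identity itself: the combinatorial Definition~\ref{jury-transform-1} does not, on its face, resemble an iterated Jury power $A^{\jury k}$. Its proof — the content of Theorem~\ref{T:poly-on-mat} — requires regrouping the ordered factorizations of $(i,j)$ into $k$ pairs summing to $(i,j)$ by the multiset of non-$(0,0)$ factors, letting the remaining $(0,0)$ factors collapse into powers of $a_{00}$ that assemble, via binomial/multinomial bookkeeping, into the Taylor derivatives $p^{(\ell)}(a_{00})$. Once that identity is granted, the argument above is essentially the classical P\'olya--Szeg\H{o} argument transported to the new cone-preserving operation, with no further analytic input required beyond Theorem~\ref{T:jury} and the closedness of the positive semidefinite cone.
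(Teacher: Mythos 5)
Your proposal is correct and follows essentially the same route as the paper: the polynomial case via the identity of Theorem~\ref{T:poly-on-mat} together with Jury's Theorem~\ref{T:jury} and the cone structure, followed by entrywise convergence of the partial-sum matrices $(s_N)_{\jury}(A)\to f_{\jury}(A)$ (using $s_N^{(\ell)}(a_{00})\to f^{(\ell)}(a_{00})$) and closedness of the positive semidefinite cone under entrywise limits. No substantive difference from the paper's argument.
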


Therefore, as in the entrywise case, our matrix transforms for a convergent power series with nonnegative Maclaurin coefficients (also called absolutely monotone) preserve positivity. Given this, as did P\'olya and Szeg\H{o} in the entrywise case, we ask a natural question: does there exist a function that is not absolutely monotonic, yet still preserves positivity over matrices of all sizes, when operated according to matrix transforms rooted in convolution? As we show, akin to the results of Schoenberg~\cite{schoenberg1942positive}, Rudin~\cite{Rudin-Duke59}, Vasudeva \cite{vasudeva1979positive}, and others~\cite{khare2022matrix} in entrywise calculus: no such functions exist beyond the absolutely monotonic ones over positive real numbers. This shows our Schoenberg-analogue for convolution viewed as a matrix transform:

\begin{utheorem}\label{T:schoeberg-for-jury}
Let $0< \rho \leq \infty$ and $I=(0,\rho)$. Suppose $\Omega \subseteq \C$ is a subset such that $\Omega\cap (0,\infty) = I$. Consider the following for $f:\Omega \to \C$.
\begin{enumerate}[$(a)$]
    \item The matrix $f_{\jury}(A)_{h}$ is positive semidefinite whenever $A=(a_{ij})\in \Omega^{N\times N}$ is positive semidefinite, for all $h>0$ such that $(a_{00},a_{00}+2(N-1)h)\subseteq I$, and for all $N\geq 1$.
    \item $f:I\to \R$ is $C^{\infty}$, and $f_{\jury}(A):=\lim_{h\to 0^+}f_{\jury}(A)_h$ is positive semidefinite whenever $A\in \Omega^{N\times N}$ is positive semidefinite, for all $N\geq 1$.
    \item We have absolute monotonicity, namely,
    \begin{align*}
        f(x)= \sum_{k=0}^{\infty}c_kx^k \quad \mbox{is convergent,} \quad \mbox{for all } x\in I,
    \end{align*}
    for some nonnegative real numbers $c_0,c_1,c_2,\dots$.
    \item The matrix $f_{\jury}(A)_{h}$ is positive semidefinite whenever $A=(a_{ij})\in \Omega^{N\times N}$ is positive semidefinite, for all sufficiently small $h=h(A)>0$, and for all $N\geq 1$.
\end{enumerate}
Then $(a)\implies (b)$ for $\rho=\infty$, and for any $0<\rho\leq \infty$ we have $(b)\iff (c)\implies (d)$. Moreover, in general $(c)\implies(a)$ may not hold (Example~\ref{Example:T-Schoenberg}).
\end{utheorem}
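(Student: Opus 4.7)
The plan is to establish the core equivalence $(b) \iff (c)$ first, and then deduce the remaining implications $(c) \implies (d)$ and $(a) \implies (b)\!\mid_{\rho=\infty}$ from the same test-matrix technology. For $(c) \implies (b)$, I would substitute the power series $f(x) = \sum_{k \ge 0} c_k x^k$ and its derivatives $f^{(\ell)}(a_{00}) = \sum_{k \ge \ell} c_k \tfrac{k!}{(k-\ell)!} a_{00}^{k-\ell}$ into Definition~\ref{jury-transform-1}; the sum over $\ell$ is finite (capped by $i+j$), so the interchange with the series in $k$ is routine, and applying Theorem~\ref{T:poly-on-mat} to each monomial $x^k$ collapses the inner expression into $(A^{\jury k})_{ij}$. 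This yields the entrywise identity $f_{\jury}(A) = \sum_{k \ge 0} c_k A^{\jury k}$, which is positive semidefinite because each $A^{\jury k}$ is so by Theorem~\ref{T:jury} and the positive semidefinite cone is closed under nonnegative combinations and entrywise limits; $C^\infty$ regularity on $I$ is automatic from the power series representation.

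For the harder direction $(b) \implies (c)$, the key is a diagonal family designed to isolate one derivative of $f$ at a time:
\[
A_{x,y,N,\varepsilon} := \mathrm{diag}(x,\, y,\, \varepsilon,\, \varepsilon,\, \ldots,\, \varepsilon) \in \mathbb{C}^{N \times N},
\]
with $x,\varepsilon \in I$ and $y>0$, so that all entries lie in $I\subseteq \Omega$ regardless of whether $0 \in \Omega$. Every nonzero entry of $A_{x,y,N,\varepsilon}$ in $\mathcal{K}_{N,N}^*$ sits on the diagonal, so any multiset $S \in \mathcal{P}_\ell^*(i,j)$ contributing a nonzero term consists solely of diagonal indices $(k,k)$ with $k \ge 1$; since these have equal coordinates, the sum forces $i=j$, and consequently $f_{\jury}(A_{x,y,N,\varepsilon})$ is itself diagonal, with its $(m,m)$ entry equal to
\[
\tfrac{f^{(m)}(x)}{m!}\, y^m + O(\varepsilon),
\]
the leading term coming from the partition of $m$ into $m$ copies of $1$ and the $O(\varepsilon)$ correction from partitions using at least one part $\ge 2$. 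Positive semidefiniteness of $f_{\jury}(A_{x,y,N,\varepsilon})$ forces nonnegativity of every diagonal entry; letting $\varepsilon \to 0^+$ and using closedness of the cone isolates $f^{(m)}(x) \ge 0$ for every $x \in I$ and every $m \ge 0$. Combined with the $C^\infty$ hypothesis of (b), Bernstein's theorem on absolutely monotone functions delivers the nonnegative power series expansion of (c), with convergence on $I$ secured by monotone extension of the derivatives to the left endpoint.

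For the two remaining implications, $(c) \implies (d)$ would proceed by first invoking the just-established $(c) \implies (b)$ so that $f_{\jury}(A)$ is positive semidefinite, and then using \eqref{E:taking-lim} to write $f_{\jury}(A)_h \to f_{\jury}(A)$ as $h \to 0^+$: in the positive definite regime, openness of the positive definite cone immediately yields an $h_0(A) > 0$ below which $f_{\jury}(A)_h$ remains positive semidefinite, while the boundary case would be handled by exploiting that absolute monotonicity gives $(D_h^\ell f)(a_{00}) \ge 0$ for every admissible $\ell, h$, so that the divided-difference corrections along the null directions of $f_{\jury}(A)$ are absorbed by the nonnegative higher-order contributions. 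For $(a) \implies (b)$ with $\rho = \infty$, I would repeat the diagonal test-matrix argument above with the divided-difference transform $f_{\jury}(-)_h$ in place of $f_{\jury}(-)$: the same reasoning now produces $(D_h^m f)(x) \ge 0$ for every $x \in I$, every $m \ge 0$, and every $h > 0$, with the hypothesis $\rho = \infty$ being precisely what lifts the Definition~\ref{jury-transform-2} constraint $a_{00} + (M+N-2)h \in I$. A non-$C^\infty$ form of Bernstein's theorem (characterizing absolute monotonicity via divided differences) then upgrades this to $f \in C^\infty(I)$ with $f^{(m)} \ge 0$, returning us to (c) and therefore to (b).

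The main obstacle I expect lies in the degenerate case of $(c) \implies (d)$: positive semidefiniteness of the limit $f_{\jury}(A)$ does not automatically propagate to its divided-difference approximants, so a naive closedness argument is insufficient, and one must carefully track how the joint nonnegativity of \emph{all} divided differences of an absolutely monotone $f$ compensates the potentially destabilizing first-order corrections along the null directions of $f_{\jury}(A)$. A secondary, milder subtlety is the entrywise convergence $f_{\jury}(A) = \sum_k c_k A^{\jury k}$ in $(c) \implies (b)$, which rests on a routine majorization controlled by $a_{00}$ lying strictly inside the radius of convergence of $f$.
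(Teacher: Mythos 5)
Your high-level plan (establish $(b)\!\iff\!(c)$ via test matrices and power series, then bolt on the rest) is the right shape, and $(c)\implies(b)$ is essentially the paper's own route: they package exactly the substitution you describe into Theorem~\ref{T:polya-szego-for-jury} and simply cite it. But there are three concrete gaps in the remaining implications.

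\textbf{Off-diagonal zeros in the test matrix.} For $(b)\implies(c)$ you use $A_{x,y,N,\varepsilon}=\diag(x,y,\varepsilon,\ldots,\varepsilon)$ and assert its entries lie in $I\subseteq\Omega$. They do not: the off-diagonal entries equal $0$, and the hypothesis says only $\Omega\cap(0,\infty)=I$, so $0$ may well be excluded from $\Omega$. Then $A_{x,y,N,\varepsilon}\notin\Omega^{N\times N}$ and you cannot apply $(b)$ to it. The paper circumvents this by invoking Theorem~\ref{T:horn-for-jury}$(b)$, whose proof uses the perturbed matrix $\diag(x,x,0,\ldots,0)+\epsilon'\mathbf{1}_{N\times N}$, all of whose entries are in $I$, and carefully tracks the $o(1)$ contamination from the $\epsilon'$ terms. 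Your diagonal-matrix analysis is clean and gives the same leading term $\tfrac{f^{(m)}(x)}{m!}y^m$, but you would need to replace the zeros by a small positive $\epsilon'$ (so the matrix is a rank-one perturbation of diagonal) and verify the correction is $o(1)$, which destroys the ``$f_\jury(A)$ is exactly diagonal'' simplification you rely on.

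\textbf{Continuity is not free in $(a)\implies(b)$.} You propose producing $(D_h^m f)(x)\ge 0$ for all $h$, then invoking a ``non-$C^\infty$ form of Bernstein's theorem'' to conclude $f\in C^\infty$. But the Boas--Widder regularity theorem (which is what underlies this step) is a statement about \emph{continuous} functions with nonnegative divided differences; nonnegative divided differences alone do not hand you continuity. The paper establishes continuity separately in Theorem~\ref{T:Rudin-for-jury}, using the first forward difference for monotonicity and the second for a midpoint inequality that rules out jumps (and a direct $2\times 2$ determinant computation when $N=2$). Your sketch skips this non-trivial step, and of course it also inherits the $\Omega$-admissibility issue above, since Lemma~\ref{L:diff-operators} in the paper gets the nonnegative difference quotients from the same $\epsilon'\mathbf{1}_{N\times N}$-perturbed matrices.

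\textbf{The boundary case of $(c)\implies(d)$.} You correctly flag this as the main obstacle, but your proposed fix — ``tracking how nonnegativity of all divided differences absorbs the corrections along null directions'' — is not an argument, and it is not what the paper does. The paper first dispenses with linear $f$ (where $f_\jury(A)_h=f_\jury(A)$ for every $h$), and for non-linear $f$ it perturbs $A$ to $A+\delta\I$: Theorem~\ref{T:jury} upgraded to positive definite matrices gives that $f_\jury(A+\delta\I)$ is positive \emph{definite}, openness of the positive definite cone plus $\lim_{h\to 0^+}f_\jury(A+\delta\I)_h = f_\jury(A+\delta\I)$ yields positive semidefiniteness for small $h$, and then $\delta\to 0^+$ finishes. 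If you want to complete your sketch you should adopt this $\delta$-regularization rather than trying to control the null directions directly.
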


\begin{remark}[Interpretations]
Theorem~\ref{T:schoeberg-for-jury} addresses two types of matrix transforms: $f_{\jury}(-)_h$ and $f_{\jury}(-)$, and some clarifications are in order.
\begin{enumerate}
    \item If no regularity of $f$ is assumed, we work with $f_{\jury}(-)_h$ as the matrix transform in $(a)$. Then, the positivity-preserving property yields that $f$ is smooth, allowing an ``upgrade'' by taking limit \eqref{E:taking-lim}.
    \item Notice that (contrary to entrywise preservers) $f_{\jury}(-)$ operates via the action of the scalar function and its derivatives on positive reals only. Therefore, in $(b)$ we treat $f_{\jury}(-)$ by considering only the real derivatives, and so we wrote ``$f_{\jury}(A):=\lim_{h\to 0^+}f_{\jury}(A)_h$''. However, for more generality, we considered complex Hermitian matrices $A$.
    \item These transforms preserve positivity precisely when $f$ is absolutely monotone over the positive reals, yielding a Schoenberg-type theorem for convolution. The final assertion treats the case where, even with absolute monotonicity, one still wishes to employ the $h$-dependent transforms $f_{\jury}(-)_h$.
\end{enumerate}
\end{remark}

Notice that Theorem~\ref{T:schoeberg-for-jury} provides a (complete) classification of transforms that preserve positivity across matrices of all dimensions. A natural next step is to investigate how the classification changes when the matrix dimension is fixed. To this end, we now present a sequence of results addressing the (more difficult) fixed-dimensional setting.

\begin{notn}
Given a set $\Omega \subseteq \C$ and integer $N\geq 1$, let $\Pn_N(\Omega)$ denote the set of $N \times N$ positive semidefinite matrices whose entries lie in $\Omega$. We also define $\Pn_N:=\Pn_N(\C)$.
\end{notn}

The implication $(c) \implies (b)$ in Theorem~\ref{T:schoeberg-for-jury} follows from Theorem~\ref{T:polya-szego-for-jury}. The other implications require more work, including our approach that follows the (chronological) development of entrywise positivity preservers. One recalls that the original Schoenberg's Theorem~\ref{Tschoenberg} was proved only for continuous functions, and later Rudin \cite{Rudin-Duke59}, among presenting other generalizations, removed the continuity condition. In parallel to this refinement, we propose an analogue for our transforms defined for any scalar function.

\begin{utheorem}\label{T:Rudin-for-jury}
Suppose $I=(0,\rho)$ is an interval for some $0< \rho \leq \infty$. Let $\Omega \subseteq \C$ such that $\Omega\cap (0,\infty) = I$, and let $f:\Omega\to \C$ be a map. Fix an integer $N\geq 2$, and suppose $f_{\jury}(A)_{h}\in \Pn_N$  for all $A=(a_{ij})\in \Pn_N(\Omega)$, for all $h>0$ such that $(a_{00},a_{00}+2(N-1)h)\subseteq I$. Then $f:I\to \R$ is nonnegative, nondecreasing, and continuous. Moreover, when $N\geq 3$ and $\rho=\infty$, we have that $f:I\to \R$ is convex (Definition~\ref{defn:mid-con}).
\end{utheorem}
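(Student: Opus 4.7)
The plan is to extract constraints on $f$ by imposing positivity on the leading principal submatrices of $f_\jury(A)_h$, exploiting the following structural observation from Definition~\ref{jury-transform-2}: the leading $k \times k$ submatrix of $f_\jury(A)_h$ depends only on the leading $k \times k$ block of $A$, because any $S \in \mathcal{P}_\ell^*(i,j)$ with $i, j \leq k-1$ consists of pairs $(p,q)$ with $p \leq i$ and $q \leq j$. Hence the $N = 2$ transform will suffice for nonnegativity, monotonicity, and continuity, and the $N = 3$ transform for convexity, provided any desired leading $2 \times 2$ or $3 \times 3$ strictly positive semidefinite configuration can be realized inside an $A \in \Pn_N(\Omega)$. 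The realization will be built by adjoining small positive-entry rank-one blocks and certifying overall positivity via a Schur-complement estimate; degenerate/limiting configurations are recovered by continuity of the transform in the entries together with the closedness of the PSD cone.

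Nonnegativity is immediate by taking $A = a \cdot \mathbf{1}_{N\times N} \in \Pn_N(\Omega)$ for $a \in I$: the $(0,0)$ entry of $f_\jury(A)_h$ equals $f(a)$, and diagonals of PSD matrices are real and nonnegative. For monotonicity, I would realize a leading $2 \times 2$ block of the form $\begin{pmatrix} a & \epsilon \\ \epsilon & c \end{pmatrix}$ with $a, c \in I$ and $\epsilon > 0$ small, and compute directly from Definition~\ref{jury-transform-2} that the $(1,1)$ entry of the leading $2\times 2$ of $f_\jury(A)_h$ equals
\[
(D_h^1 f)(a)\, c + (D_h^2 f)(a)\, \epsilon^2.
\]
Sending $\epsilon \to 0^+$ yields $(D_h^1 f)(a)\, c \geq 0$, hence $(D_h^1 f)(a) \geq 0$, i.e., $f(a+h) \geq f(a)$ whenever $a + 2h < \rho$; a short chaining argument through small steps then promotes this to monotonicity on all of $I$.

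For continuity, suppose $f$ has a right jump of size $\delta := f(x_0^+) - f(x_0) > 0$ at some $x_0 \in I$ (the left-jump case is symmetric, using right-continuity once established). As $h \to 0^+$, the nondecreasing hypothesis forces $(D_h^1 f)(x_0) \sim \delta/h$ and $(D_h^2 f)(x_0) \sim -\delta/h^2$. Applying the determinant PSD condition to the same leading $2\times 2$ of $f_\jury(A)_h$ at $a_{00} = x_0$ with the scaling $\epsilon^2 = \lambda^2 c h$ for $\lambda \in (0, 1)$, the inequality reduces after cancelling $c h$ to
\[
f(x_0)\,[(1 - \lambda^2)\,\delta + o(1)] \;\geq\; (\delta + o(1))^2\, \lambda^2, \qquad h \to 0^+.
\]
Taking $h \to 0^+$ first and then $\lambda \to 1^-$ yields $0 \geq \delta$, contradicting $\delta > 0$.

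For convexity, when $N \geq 3$ and $\rho = \infty$, I would arrange the leading $3 \times 3$ block of $A$ to approximate $\diag(a, b, c)$ with $a, b, c \in I$. A short calculation from Definition~\ref{jury-transform-2} shows that $f_\jury(\diag(a, b, c))_h$ is itself diagonal, with $(2,2)$ entry $(D_h^1 f)(a)\, c + \tfrac{1}{2}(D_h^2 f)(a)\, b^2$; the perturbation-and-closure argument from the first paragraph then certifies this limiting diagonal as PSD. Fixing $a, b, h$ and sending $c \to 0^+$ gives $(D_h^2 f)(a) \geq 0$ for every $a, h > 0$ (no upper bound on $h$ is needed since $\rho = \infty$), which is midpoint convexity on $I$; combined with the continuity already proved, this gives convexity. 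The main obstacle is the continuity step, where the competing singularities $1/h$ and $1/h^2$ in the divided differences must be balanced by the precise scaling $\epsilon \sim \sqrt{c h}$, and the tight contradiction only emerges in the iterated limit $h \to 0^+$, then $\lambda \to 1^-$. A secondary technicality, when $0 \notin \Omega$, is the construction of matrices in $\Pn_N(\Omega)$ whose leading blocks realize the degenerate test configurations; this is handled by the positive-entry perturbations together with the closedness of the PSD cone.
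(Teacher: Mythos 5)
Your proposal follows the same overall strategy as the paper's proof: compute $f_{\jury}(A)_h$ explicitly on carefully chosen small matrices, extract nonnegativity of the divided differences from PSD diagonal entries and determinants, then take limits. Your explicit structural observation --- that the leading $k\times k$ submatrix of $f_{\jury}(A)_h$ depends only on the leading $k\times k$ block of $A$ --- is the same fact the paper's Lemma~\ref{L:diff-operators} uses implicitly, and your scaling $\epsilon^2 = \lambda^2 ch$ for continuity is a workable variant of the paper's multiply-by-$h^2$-and-let-$h\to 0^+$ argument. Still, one of your steps hides a genuine circularity, and two others are stated imprecisely enough that they would not survive a careful write-up.

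The circularity lies in your realization and limiting step. You plan to recover degenerate configurations ``by continuity of the transform in the entries together with the closedness of the PSD cone.'' But $f_{\jury}(-)_h$ is continuous in the entries only when the $(0,0)$ entry is held \emph{fixed}: the remaining entries enter polynomially, whereas $a_{00}$ enters through $f(a_{00})$ and $(D_h^{\ell}f)(a_{00})$, which are exactly the quantities whose continuity you have not yet established. If your rank-one perturbation moves $a_{00}$ at all (as the natural $\mu\mathbf{1}_{N\times N}$ perturbation does), you are assuming the conclusion. The paper's construction $A(\epsilon,x):=\diag(x-\epsilon,x-\epsilon,0,\dots,0)+\epsilon\mathbf{1}_{N\times N}$ is designed precisely so that $a_{00}=x$ is invariant in $\epsilon$; any embedding you use for the $2\times 2$ and $3\times 3$ test blocks must have the same feature, e.g.\ $\diag(a-\epsilon,c-\epsilon,0,\dots,0)+\epsilon\mathbf{1}_{N\times N}$. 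Two smaller points: the step constraint you write, ``$a+2h<\rho$'', should be $a+2(N-1)h<\rho$ (it depends on the ambient dimension $N$, not on the $2\times 2$ test block); and the left-jump case of continuity is not literally ``symmetric'' --- forward differences see only values to the right of $a_{00}$, so you must shift to a matrix with first entry $x_0-h$, exactly as the paper does with $B(h):=A-h\I$, and only then does the argument mirror the right-jump case once right-continuity is in hand. Your short chaining argument for monotonicity does work (take $m$ equal steps of size $(y-x)/m$; for $m$ large all intermediate constraints $x+(k-1+2(N-1))h<\rho$ hold), and your $3\times 3$ diagonal calculation correctly reduces convexity to $(D_h^2 f)(a)\ge 0$ for all $h>0$, i.e.\ mid-convexity, which with continuity yields convexity.
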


Another pivotal contribution to entrywise positivity preservers is due to Horn \cite{horn1969theory}: building on an idea of Loewner, he proved that if $f: (0,\infty) \to \R$ is continuous and $f[A]:=(f(a_{ij}))\in \Pn_N$ for all $A:=(a_{ij})\in \Pn_N((0,\infty))$, then $f\in C^{N-3}((0,\infty))$ and $f^{(k)}(x) \geq 0$ for all $x > 0$ and all $0 \leq k \leq N-3$. Moreover, if one already knows that $f\in C^{N-1}((0,\infty))$, then $f^{(k)}(x)\geq 0$ for all $x>0$ and all $0\leq k\leq N-1$. (See \cite{guillot2017preserving} for refinements.) Here, we present the analogue of this for both of our matrix transforms.

\begin{utheorem}\label{T:horn-for-jury}
Fix an integer $N\geq 2$, and let $f:(0,\infty)\to \R$.
\begin{enumerate}[$(a)$]
    \item Suppose $f_{\jury}(A)_h\in \Pn_N$ for all $A\in \Pn_N((0,\infty))$ and all $h>0$. Then $f\in C^{N-3}((0,\infty))$, and
    \[
    f^{(k)}(x)\geq 0 \quad \mbox{for all}\quad x\in (0,\infty) \quad \mbox{and all}\quad 0\leq k\leq N-3.
    \]
    Moreover, $f^{(k)}$ for $0\leq k\leq N-3$ are convex, and $f^{(N-3)}$ has nondecreasing left- and right-hand derivatives.
    \item Suppose $f$ is $2N-2$ times differentiable over $(0,\infty)$, and $f_{\jury}(A)\in \Pn_N$ for all $A\in \Pn_N((0,\infty))$. Then,
    \[
    f^{(k)}(x)\geq 0 \quad \mbox{for all}\quad x\in (0,\infty) \quad \mbox{and all}\quad 0\leq k\leq N-1.
    \]
    Moreover, this holds if $(0,\infty)$ is replaced by $(0,\rho)$ for any $0<\rho\leq \infty$.
\end{enumerate}
\end{utheorem}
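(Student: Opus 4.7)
The plan is to identify a family of PSD matrices inside the closure of $\Pn_N((0,\infty))$ on which the transforms $f_{\jury}(-)_{h}$ and $f_{\jury}(-)$ collapse to an explicit diagonal form, and then to read off the required inequalities on the divided differences (resp.\ derivatives) of $f$ from the entrywise nonnegativity of a diagonal PSD matrix. The key matrix will be $A_0 := \diag(x,\,t,\,0,\dots,0)$ with $x>0$ and $t\geq 0$: by Definitions~\ref{jury-transform-1} and \ref{jury-transform-2}, the only index $(p,q)\in \mathcal{K}_{N,N}^*$ with $a_{p,q}\ne 0$ is $(p,q)=(1,1)$, so in $(f_{\jury}(A_0))_{k,m}$ the only multiset $S\in \mathcal{P}_{\ell}^{*}(k,m)$ that can contribute consists of copies of $(1,1)$; such an $S$ exists only for $k=m$, in which case $S$ has exactly $k$ copies of $(1,1)$, $\ell = k$, and multinomial weight $1/k!$. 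This yields
\begin{equation*}
f_{\jury}(A_0)\;=\;\diag\Bigl(f(x),\ f'(x)\,t,\ \tfrac{f''(x)}{2!}\,t^{2},\ \dots,\ \tfrac{f^{(N-1)}(x)}{(N-1)!}\,t^{\,N-1}\Bigr),
\end{equation*}
and the analogous formula for $f_{\jury}(A_0)_{h}$ with $f^{(k)}(x)$ replaced by $(D_{h}^{k}f)(x)$.

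Since $A_0\notin \Pn_N((0,\infty))$, I would approximate it by $A_\epsilon := A_0 + \epsilon\,\mathbf{1}_{N\times N}\in \Pn_N((0,\infty))$ for $\epsilon>0$, where $\mathbf{1}_{N\times N}$ is the rank-one PSD matrix of all $1$'s. The entries of $f_{\jury}(A_\epsilon)_{h}$ are polynomials in the entries of $A_\epsilon$ with coefficients $(D_{h}^{\ell}f)(x+\epsilon)$, so continuity of $f$ at the finitely many points $x+jh$ for $0\leq j\leq 2(N-1)$ lets one send $\epsilon\to 0^{+}$ and obtain $f_{\jury}(A_0)_{h}\in \Pn_N$ from closedness of the PSD cone. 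In part (b) this continuity is supplied by differentiability of $f$; in part (a) it is precisely the output of Theorem~\ref{T:Rudin-for-jury}, which I would invoke first to extract that $f:(0,\infty)\to \R$ is nonnegative, nondecreasing, and continuous.

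With the diagonal matrix $f_{\jury}(A_0)\in \Pn_N$ (resp.\ $f_{\jury}(A_0)_{h}\in \Pn_N$) in hand, PSD-ness is entrywise nonnegativity, and letting $t>0$ be arbitrary yields $f^{(k)}(x)\geq 0$ in part (b), and $(D_{h}^{k}f)(x)\geq 0$ in part (a), for every $x\in(0,\infty)$, every $h>0$, and every $0\leq k\leq N-1$. This already closes part (b), whose extension from $(0,\infty)$ to $(0,\rho)$ is immediate after choosing $t>0$ and $\epsilon>0$ small enough that the entries of $A_\epsilon$ stay in $(0,\rho)$. For part (a), the inequalities $(D_{h}^{k}f)(x)\geq 0$ for $0\leq k\leq N-1$ and all $h>0$ express \emph{$k$-convexity} (higher-order monotonicity) of $f$ of each order; the classical theorem of Popoviciu and Boas--Widder asserts that a continuous $n$-convex function on an open interval, for $n\geq 2$, is $(n-2)$-times continuously differentiable with $f^{(n-2)}$ itself convex. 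Applying this for $n=N-1$ gives $f\in C^{N-3}((0,\infty))$ with $f^{(N-3)}$ convex, and for $n=k+2$ with $0\leq k\leq N-3$ gives convexity of each $f^{(k)}$; the pointwise nonnegativity $f^{(k)}(x)\geq 0$ then follows from $\lim_{h\to 0^{+}}(D_{h}^{k}f)(x)=f^{(k)}(x)$, while monotone one-sided derivatives of $f^{(N-3)}$ are automatic from its convexity.

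The main obstacle I anticipate is the partition bookkeeping underlying the diagonal collapse: one must rigorously verify, for every $(k,m)$ with $k\ne m$, that no multiset in $\mathcal{P}_{\ell}^{*}(k,m)$ survives when all $a_{p,q}$ with $(p,q)\neq(1,1)$ vanish, and that the surviving partition for $k=m$ is unique. This uses the exclusion $(0,0)\notin\mathcal{K}_{N,N}^{*}$ in a delicate way. Beyond this combinatorial check and a clean invocation of higher-order monotonicity, every remaining step either is the Rudin-type regularity already delivered by Theorem~\ref{T:Rudin-for-jury} or a routine continuity--closedness argument for the PSD cone.
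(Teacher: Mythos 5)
Your proposal is correct and follows essentially the same route as the paper: perturb a diagonal matrix by $\epsilon\,\mathbf{1}_{N\times N}$, read off nonnegativity of the (near-)diagonal entries of the transform to extract nonnegative divided differences (resp.\ derivatives), invoke Theorem~\ref{T:Rudin-for-jury} for continuity, and finish part~(a) with Boas--Widder. The only small refinement the paper makes is in Lemma~\ref{L:diff-operators}, where it takes $\diag(x-\epsilon,x-\epsilon,0,\dots,0)+\epsilon\mathbf{1}_{N\times N}$ so that $a_{00}=x$ is held fixed in $\epsilon$; this removes the need for continuity of $f$ when sending $\epsilon\to 0^{+}$, whereas your choice $\diag(x,t,0,\dots,0)+\epsilon\mathbf{1}_{N\times N}$ gives $a_{00}=x+\epsilon$ and therefore needs the continuity from Theorem~\ref{T:Rudin-for-jury} first, which you correctly supply.
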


Theorem~\ref{T:horn-for-jury} involves two matrix transforms, $f_{\jury}(-)_h$ and $f_{\jury}(-)$, whose applicability depends on the regularity of $f$. When no regularity is assumed, we employ $f_{\jury}(-)_h$, as in Theorem~\ref{T:horn-for-jury}$(a)$. By contrast, when $f$ is sufficiently differentiable, the transform $f_{\jury}(-)$ can be applied directly. Both yielding results in analogy with the classical Horn theorem. We note, however, that our assumption is stronger: ${2N-2}$ times differentiability rather than $C^{N-1}$ in Horn’s theorem. This is to ensure that $f_{\jury}(-)$ is well defined on $N \times N$ matrices.

Following the classical Schoenberg Theorem~\ref{Tschoenberg}, several works have focused on understanding entrywise transforms that preserve the positivity of matrices of a fixed dimension \(N \ge 2\), rather than for all dimensions simultaneously. A seminal result in this direction is due to FitzGerald and Horn \cite{fitzgerald1977fractional} on fractional Hadamard powers. They showed that for any fixed \(N \geq 2\) and \(\alpha \in \mathbb{R}\), the matrix \(A^{\circ \alpha} := (a_{ij}^\alpha)\) is positive semidefinite whenever \(A = (a_{ij})\) is so, with \(a_{ij} \ge 0\), if and only if $\alpha \in \mathbb{Z}_{\ge 0}$ or \(\alpha \geq N-2\). This work introduced several useful techniques including the well-known ``integration trick'', inspired recent works connecting analysis and combinatorics \cite{guillot2016critical}, and highlighted a key distinction from Schoenberg’s Theorem~\ref{Tschoenberg}: in the fixed-dimensional setting, there \textit{do} exist functions that are $C^{\infty}$ and not absolutely monotonic yet still preserve positivity. In a parallel development towards this for our matrix transforms, we present the following result.

\begin{utheorem}\label{T:fh-for-jury}
Suppose $I=(0,\rho)$ for $0<\rho\leq \infty$, and let $N\geq 2$ be a fixed integer. For $\alpha\in \R$ define $f(x)\equiv x^{\alpha}$.
\begin{enumerate}[$(a)$]
    \item For $N=2$: the matrix $f_{\jury}(A)\in \Pn_2$ for all $A\in \Pn_2(I)$ if and only if $\alpha\geq 0$.
    \item For $N\geq 3$: for each $\alpha<N-2$ that is not a nonnegative integer, there exists $A\in \Pn_N(I)$ such that $f_{\jury}(A)\not\in \Pn_N$.
\end{enumerate}
\end{utheorem}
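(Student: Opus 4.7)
My plan is to handle the two parts separately: the $N=2$ case by direct entrywise computation from Definition~\ref{jury-transform-1}, and the $N\geq 3$ case by invoking the Horn-type Theorem~\ref{T:horn-for-jury}(b) through its contrapositive.

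For part (a), I would first write down $f_{\jury}(A)$ explicitly for a generic $A = \begin{pmatrix} a & c \\ c & b \end{pmatrix}\in \Pn_2((0,\rho))$. Enumerating the (very few) multisets in $\mathcal{P}_{\ell}^{*}(i,j)$ for $(i,j)\in\{(0,1),(1,0),(1,1)\}$, the formula collapses to a $2\times 2$ real symmetric matrix whose entries are explicit expressions in $a,b,c,\alpha$. A direct computation of the determinant should yield the clean factorization
\[
\det f_{\jury}(A) \;=\; \alpha\, a^{2\alpha - 2}\,(ab - c^{2}),
\]
and since $a>0$ and $ab\geq c^{2}$ by the PSD hypothesis, this immediately delivers both directions: $\alpha<0$ is falsified by any choice with $ab>c^{2}$, while $\alpha\geq 0$ forces $\det f_{\jury}(A)\geq 0$. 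Nonnegativity of the diagonal entries (in particular the $(1,1)$ entry $\alpha a^{\alpha-1}b + \alpha(\alpha-1)a^{\alpha-2}c^{2}$) in the sufficiency direction would be checked separately by splitting into the cases $\alpha\geq 1$ and $0\leq \alpha<1$, with the latter again leveraging $ab\geq c^{2}$ to absorb the negative term.

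For part (b), I would invoke Theorem~\ref{T:horn-for-jury}(b) contrapositively. Since $f(x) = x^{\alpha}$ is smooth on $(0,\infty)$, it is in particular $2N-2$ times differentiable, so it suffices to exhibit some $k\in\{0,1,\dots,N-1\}$ for which $f^{(k)}(x) = (\alpha)_{k}\, x^{\alpha-k}$ is negative on $(0,\infty)$, where $(\alpha)_{k} := \alpha(\alpha-1)\cdots(\alpha-k+1)$ denotes the falling factorial. If $\alpha<0$, the choice $k=1$ works; otherwise, writing $\alpha\in(\ell,\ell+1)$ with $0\leq \ell\leq N-3$ (noting that $\alpha<N-2$ and $\alpha\notin\mathbb{Z}_{\geq 0}$ forces such an $\ell$), the choice $k=\ell+2\leq N-1$ yields a falling factorial with exactly one negative factor, namely $(\alpha-\ell-1)$, and $\ell+1$ positive factors, so $(\alpha)_{\ell+2}<0$. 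The contrapositive of the Horn-type theorem then produces the desired $A\in \Pn_{N}(I)$ with $f_{\jury}(A)\notin\Pn_{N}$.

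The main obstacle, modest as it is, lies in the careful enumeration of the multisets $\mathcal{P}_{\ell}^{*}(i,j)$ for the $N=2$ transform and in the algebraic cancellation that produces the clean determinant identity; once that identity is in hand, part (a) falls out immediately, and part (b) is essentially a black-box application of the already established Horn-Jury theorem with a short sign-counting argument for the falling factorial.
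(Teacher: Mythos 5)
Your proposal is correct and follows essentially the same two-pronged strategy as the paper: a direct $2\times 2$ computation for part~(a) and the contrapositive of Theorem~\ref{T:horn-for-jury}(b) for part~(b). The only notable variations are presentational. For the determinant in (a), you propose the clean factorization $\det f_{\jury}(A)=\alpha\, a^{2\alpha-2}(ab-c^{2})$, which does hold and arguably streamlines matters; the paper instead works directly with the inequality $ac\geq b^2$ (in its notation) without naming the factorization, and it establishes nonnegativity of the $(1,1)$ entry by a single observation $\alpha(ac-b^2)+\alpha^2 b^2\geq 0$ rather than your two-case split on $\alpha\geq 1$ versus $0\leq\alpha<1$ (both work, the paper's is slightly slicker). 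For the necessity of $\alpha\geq 0$ in (a), you read it off the determinant sign, whereas the paper invokes Theorem~\ref{T:horn-for-jury}(b) with $f'(x)<0$; both are valid. Part~(b) is essentially identical to the paper's argument, your $\ell$ being the paper's $k-2$.
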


\begin{remark}
Theorem~\ref{T:fh-for-jury} provides a partial analogue of the FitzGerald and Horn result for matrix convolution, and a few observations are in order. First, contrary to Theorem~\ref{T:schoeberg-for-jury}, in the fixed-dimensional case of $N=2$, there exist positivity preservers that are $C^{\infty}$ and not absolutely monotonic. For higher dimensions $N \geq 3$, we have obtained partial results, again in parallel with the classical theorem of FitzGerald--Horn. (For a possible way to resolve the remaining open cases, see Section~\ref{S:concluding} for a more qualitative meaning of our transforms.)
\end{remark}

This summarizes our results on positivity preservers for transforms that respect convolution or the Jury product. We conclude with a note that the proofs of these aforementioned results seem to be using the ``full extent'' of $f_{\jury}(-)_h$, and rely on several fundamental tools from real and complex analysis. These include Bernstein’s theorem \cite{bernstein1929sur} on absolutely monotonic functions, the implications of Boas and Widder's theorem \cite{boas1940functions} for continuous functions with nonnegative divided differences, as well as the roles of monotonicity and nonnegativity in establishing continuity of a function.

Finally, as previously noted, the definition of our transforms is based on a Cayley--Hamilton-type theorem for matrix spaces equipped with Jury's product / convolution. We systematically develop this in the next section.

\section{Cayley--Hamilton theorem}\label{S:cayley-hamilton-thm}

The classical Cayley--Hamilton theorem states that the characteristic polynomial of a square matrix annihilates the matrix \cite{dummit2004abstract}. More explicitly, for an integer $N \geq 1$ and $A \in \mathbb{C}^{N \times N}$, we have $p_A(A) = 0$ if we know that
\begin{align}\label{eq:ch-std}
p_A(z) = \det (zI - A) \quad \mbox{where} \quad \deg(p_A) = N.
\end{align}
While a specific matrix may satisfy a polynomial identity of degree less than $N$, the upper bound $N$ is attainable, as in the case of diagonal matrices with distinct diagonal entries.

This observation naturally raises the question of how the Cayley--Hamilton framework changes when matrices are equipped with products other than the standard one. For example, in the case of the Schur (entrywise) product of (possibly rectangular) matrices, if $A=(a_{ij})\in \C^{M\times N}$, the corresponding Cayley--Hamilton theorem takes the form
\begin{align}\label{eq:ch-schur}
p_{A}(z) = \prod_{i=0}^{M-1}\prod_{j=0}^{N-1} (z-a_{ij}) \quad \mbox{where} \quad \deg(p_A) = MN,
\end{align}
and the maximal degree $MN$ is realized when the entries of $A$ are all distinct. This already illustrates that the structure of the underlying product has a decisive influence on the form of the Cayley--Hamilton theorem. Moreover, it suggests that the theory can be meaningfully extended to other algebraic settings.

In this spirit, we now turn to the {Jury product} (or convolution of matrices), which applies naturally to rectangular matrices, and establish the corresponding Cayley--Hamilton theorem. To be more specific about our anticipation of it for the rings equipped with the Jury product, we formulate the following question.

\begin{quest}\label{question}
Fix integers $M,N \geq 1$.
\begin{enumerate}
    \item Given a matrix $A\in \C^{M\times N}$, does there exist a polynomial $p\in \C[z]$ such that $p_{\jury}(A)=0$?
    \item As $\C^{M\times N}$ is finite dimensional, the answer to above is naturally ``yes''. So we upgrade: does there exist $\kappa \geq 1$ such that for every $A \in \C^{M\times N}$ one can find a monic $p=p_A \in \C[z]$ of degree $\kappa$ with $p_{\jury}(A)=0$?
    \item If yes, what is the smallest value of such a $\kappa$? And, is there an explicit formula for $p_{A}$?
\end{enumerate}
We call the minimal $\kappa$ as the Cayley–Hamilton \textit{degree}, and the corresponding $p_A\in\C[z]$ as the Cayley–Hamilton \textit{polynomial} (for the matrix ring equipped with convolution and the usual sum).
\end{quest}

The following result provides the answers.

\begin{utheorem}\label{main-thm-1}
Fix integers $M,N\geq 1$, and consider the ring of matrices $\C^{M\times N}$ equipped with convolution (the Jury product) and the usual sum. Then the Cayley--Hamilton degree is $M+N-1$. Moreover, for any $A=(a_{ij})\in \C^{M\times N}$, we have
\[
p_{A}(z):=(z-a_{00})^{M+N-1}
\]
as the Cayley--Hamilton polynomial.
\end{utheorem}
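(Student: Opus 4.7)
The plan is to establish the theorem in two parts: (i) verify that $p_A(z) := (z - a_{00})^{M+N-1}$ annihilates every $A \in \C^{M \times N}$ under the Jury product, giving the upper bound $\kappa \leq M+N-1$; and (ii) exhibit an explicit matrix for which no monic polynomial of degree $\leq M+N-2$ annihilates it, giving the matching lower bound.

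For (i), the key reduction is to translate to the case $a_{00} = 0$ by setting $B := A - a_{00} \I_{\jury}$, so that $b_{00} = 0$ while $b_{ij} = a_{ij}$ for $(i,j) \in \mathcal{K}_{M,N}^*$. Iterating Definition~\ref{defn:jury-prod} expresses the entries of the $k$-fold Jury power as
\[
(B^{\jury k})_{ij} \;=\; \sum_{\substack{(\ell_s,\kappa_s)\in\Z_{\geq 0}^2 \\ \sum_{s=1}^k (\ell_s,\kappa_s) = (i,j)}} \prod_{s=1}^k b_{\ell_s,\kappa_s}.
\]
Since $b_{00} = 0$, only tuples with every part $(\ell_s,\kappa_s) \in \mathcal{K}_{M,N}^*$ survive, and each such part forces $\ell_s + \kappa_s \geq 1$. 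A nonzero summand therefore requires $k \leq i+j \leq M+N-2$, so $(B^{\jury k})_{ij}$ vanishes at every $(i,j) \in \mathcal{K}_{M,N}$ whenever $k \geq M+N-1$. By the multiplicativity identity~\eqref{E:poly-act-prop}, this is exactly the assertion $(A - a_{00}\I_{\jury})^{\jury(M+N-1)} = 0$.

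For (ii), I would choose the matrix $A \in \C^{M\times N}$ with $a_{10} = a_{01} = 1$ and all remaining entries (including $a_{00}$) equal to zero. In the expansion above, the only nonzero choices of $(\ell_s,\kappa_s)$ are $(1,0)$ and $(0,1)$, and a direct count gives
\[
(A^{\jury k})_{ij} \;=\; \binom{i+j}{i} \text{ when } k = i+j, \qquad \text{and }\; 0 \text{ otherwise}.
\]
Given any monic $p(z) = z^\kappa + c_{\kappa-1}z^{\kappa-1} + \cdots + c_0$ of degree $\kappa \leq M+N-2$, I would pick an index $(i,j) \in \mathcal{K}_{M,N}$ with $i+j = \kappa$ (such a pair exists in $\mathcal{K}_{M,N}$ precisely for $\kappa \leq M+N-2$). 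At that entry only the leading power $A^{\jury \kappa}$ contributes, yielding $(p_{\jury}(A))_{ij} = \binom{\kappa}{i} \neq 0$; the case $\kappa = 0$ is handled by $\I_{\jury} \neq 0$. Consequently no monic polynomial of degree below $M+N-1$ annihilates $A$.

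The only nontrivial ingredient is the support-vanishing argument in (i), which is driven by the combinatorial fact that the $k$-fold convolution of a matrix with vanishing $(0,0)$-entry is indexed by ordered compositions of $(i,j)$ with parts in $\mathcal{K}_{M,N}^*$; after this observation, the upper bound essentially reduces to noting that $M+N-1$ is one more than $\max_{(i,j)\in\mathcal{K}_{M,N}}(i+j)$. Everything else is a routine verification, so I do not anticipate serious obstacles beyond executing this key combinatorial observation.
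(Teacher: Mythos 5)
Your proof is correct, and its overall strategy (upper bound from the support of the $k$-fold convolution when the $(0,0)$ entry vanishes, lower bound from an explicit witness matrix) is the same as the paper's; the genuine difference is in the witness. Your upper-bound argument coincides with the paper's second proof of Lemma~\ref{lemma-AB-prod-1}(2): the composition formula you write for $(B^{\jury k})_{ij}$ is Lemma~\ref{k-power-entries:thm}(1) specialized to equal factors (the paper proves it by the same induction you summarize as ``iterating the definition''), and the observation that a tuple of $M+N-1$ nonzero parts cannot sum to any $(i,j)\in\mathcal{K}_{M,N}$ is exactly the paper's counting argument; combined with \eqref{E:poly-act-prop} this gives $(z-a_{00})^{M+N-1}$ as an annihilator, as in the paper. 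For the lower bound the paper takes $A=\I_{\circ}-\I_{\jury}$ (all entries $1$ except the $(0,0)$ entry) and shows only that $A^{\jury\ell}\neq 0$ for $\ell\le M+N-2$, with the passage from this to the nonexistence of \emph{any} low-degree monic annihilator resting on the structure of annihilating polynomials developed later (Lemma~\ref{Cayley-Hamilton-Jury-kbyk-thm}); your sparser witness, supported on $\{(1,0),(0,1)\}$, makes $(A^{\jury t})_{ij}=0$ whenever $t\neq i+j$, so at an index with $i+j=\kappa$ only the leading power of a monic $p$ of degree $\kappa\le M+N-2$ survives and $(p_{\jury}(A))_{ij}=\binom{\kappa}{i}\neq 0$ --- a self-contained argument that needs neither the anti-diagonal machinery nor the minimal-polynomial lemma, which is a small but real gain. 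The one caveat is degenerate sizes: for $M=1$ (resp.\ $N=1$) the entry $a_{10}$ (resp.\ $a_{01}$) does not exist, so you should treat those cases by keeping only the existing entry equal to $1$ (the same count gives $(A^{\jury k})_{0,j}=\delta_{k,j}$, and $M=N=1$ is already covered by $\I_{\jury}\neq 0$); this is a trivial repair, not a gap in the idea.
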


\begin{remark}\label{rem:ch-thm-1}
We record a few (immediate) observations.
\begin{enumerate}[$(i)$]

\item In the spirit of the classical Cayley--Hamilton theorem: the polynomial
\(
p_{A}(z) := (z-a_{00})^{M+N-1}
\)
annihilates $A = (a_{ij}) \in \mathbb{C}^{M\times N}$ in the commutative ring $\mathbb{C}^{M\times N}$ endowed with the matrix convolution. In other words:
\[
0=\big{(}A-a_{00}\I_{\jury}\big{)}^{\jury(M+N-1)} = \sum_{j=0}^{M+N-1} \binom{M+N-1}{j} (-a_{00})^{M+N-1-j} A^{\jury j}.
\]
Therefore, we get a formula for the inverse under convolution if $a_{00}\neq 0$,
\begin{align}
A^{\jury -1}= - \sum_{j=1}^{M+N-1} \binom{M+N-1}{j} (-1/a_{00})^{j} A^{\jury (j-1)}.
\end{align}
Compared to the algorithmic \eqref{E:jury-inverse}, the above is direct.

\item Moreover, as will become clear from the proof of Theorem~\ref{main-thm-1}, it remain valid when $\mathbb{C}$ is replaced by any unital commutative ring of characteristic zero.

\item The polynomial $p_A$ need not be minimal among those satisfying $p(A)=0$. The question of the minimal polynomial is subtler, as it depends intricately on the specific matrix $A$, and will be addressed later.

\item Finally, as discussed later in Subsection~\ref{R:CH-for-unbound-mat}, a striking contrast to the standard and Schur products arises: the answer to Question~\ref{question}(1) is ``no'' for the natural extension $\C_{00}^{\N\times \N}$ of $\C^{M\times N}$. However, on the Hilbert space $\overline{\C_{00}^{\N\times \N}}$, the answer is ``no'' for all the three matrix products.
\end{enumerate}
\end{remark}

\subsection*{Organization of this section}
As the Jury product (matrix convolution) has rarely been studied from an algebraic perspective, we adopt a more gradual approach. The remainder of this section is therefore developed in a step-by-step manner, examining successive facets of the Jury product one at a time: from ``visual'' or ``combinatorial'' to more ``rigorous''. This progression may be observed as the discussion moves from Lemma~\ref{Cayley-Hamilton-Jury-2by2-lemma-0} ${\bf \longrightarrow}$ Lemma~\ref{lemma-AB-prod-1}
${\bf \longrightarrow}$ Lemma~\ref{k-power-entries:thm} ${\bf \longrightarrow}$ Theorem~\ref{T:poly-on-mat}.

We begin with the proof of Theorem~\ref{main-thm-1} in the special case $M = N = 2$. Although this case is highly specific, the proof is instructive: it both highlights certain intrinsic features of the Jury product (that derivatives appear naturally) and provides essential background for the general argument, which is more intricate and is presented in the later part of Subsection~\ref{Ssub:cayley-hamilton-proof}. The discussion then turns to the minimal polynomial in Subsection~\ref{Ssub:min-poly}. Together, these show that the new matrix transforms in Definitions~\ref{jury-transform-1} and \ref{jury-transform-2} arise as natural extensions of scalar functions to matrix spaces equipped with convolution.

\subsection{Proof of the Cayley--Hamilton-type theorem}\label{Ssub:cayley-hamilton-proof}

We first consider $M=N=2$.

\begin{lemma}\label{Cayley-Hamilton-Jury-2by2-lemma-0}
Let $A:=\begin{pmatrix}a & b \\ c & d\end{pmatrix} \in \C^{2\times 2}$. Then the $k$-th Jury power of $A$ is given by
\begin{align*}
A^{\jury k}=\begin{pmatrix}a^k & ka^{k-1}b \\ ka^{k-1}c & ka^{k-1}d + k(k-1)a^{k-2}bc\end{pmatrix}
\end{align*}
for all integers $k\geq 0$.
\end{lemma}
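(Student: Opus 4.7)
The natural approach is induction on $k$, with the base cases $k=0$ and $k=1$ handled by direct inspection against the multiplicative identity
$\I_{\jury} = \bigl(\begin{smallmatrix} 1 & 0 \\ 0 & 0 \end{smallmatrix}\bigr)$ and against $A$ itself (treating the convention $0 \cdot a^{-1} = 0$ and $0 \cdot a^{-2} = 0$ when $k \leq 1$, which makes the formula collapse correctly). The inductive step rests on the recursion $A^{\jury (k+1)} = A^{\jury k} \jury A$ together with the fully explicit form of the Jury product on $2 \times 2$ matrices, which in our index convention reads
\begin{align*}
(M \jury A)_{00} &= m_{00} a, \\
(M \jury A)_{01} &= m_{00} b + m_{01} a, \\
(M \jury A)_{10} &= m_{00} c + m_{10} a, \\
(M \jury A)_{11} &= m_{00} d + m_{01} c + m_{10} b + m_{11} a.
\end{align*}

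Setting $M := A^{\jury k}$ and substituting the inductive formula for its entries, the $(0,0)$-, $(0,1)$-, and $(1,0)$-entries of $A^{\jury(k+1)}$ reduce immediately to $a^{k+1}$, $(k+1)a^{k}b$, and $(k+1)a^{k}c$ respectively. The only nontrivial computation is the $(1,1)$-entry: plugging in the inductive hypothesis yields
$a^{k}d + k a^{k-1}bc + k a^{k-1}bc + \bigl(k a^{k-1} d + k(k-1)a^{k-2}bc\bigr)a$, and I would then collect terms to obtain $(k+1)a^{k} d + \bigl(2k + k(k-1)\bigr)a^{k-1}bc = (k+1)a^{k}d + (k+1)k\, a^{(k+1)-2}bc$, which is exactly the claimed formula with $k$ replaced by $k+1$.

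The main (minor) obstacle is purely bookkeeping in the $(1,1)$-entry, namely recognizing that the coefficient combines to $k(k+1)$ via $2k + k(k-1) = k(k+1)$. No deeper ideas are needed, but it is worth remarking -- as motivation for the more elaborate $M \times N$ case to follow -- that the entries of $A^{\jury k}$ can already be read as Taylor-type evaluations: writing $f(z) = z^k$, we recognize $a^k = f(a)$, $ka^{k-1}b = f'(a)\,b$, $ka^{k-1}c = f'(a)\,c$, and $ka^{k-1}d + k(k-1)a^{k-2}bc = f'(a)\,d + \tfrac{1}{2!}f''(a)\cdot 2bc$, foreshadowing the general polynomial identity of Theorem~\ref{T:poly-on-mat} and explaining why derivatives appear intrinsically in the subsequent Definitions~\ref{jury-transform-1} and \ref{jury-transform-2}.
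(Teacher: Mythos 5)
Your proof is correct and follows essentially the same route as the paper: induction on $k$ with base cases $k=0,1$ and an inductive step via the explicit $2\times 2$ Jury product formula for $A^{\jury(k+1)}=A^{\jury k}\jury A$, with the only work being the coefficient collection $2k+k(k-1)=k(k+1)$ in the $(1,1)$-entry. Your closing remark about reading the entries as derivative evaluations matches the paper's own motivation for the later transforms, so nothing is missing.
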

\begin{proof}
It is obvious that the formula holds for $k=0,1$. We employ induction for $k+1\geq 2$. We compute using the associativity of the Jury product:
\begin{align*}
    A^{\jury (k+1)}=A^{\jury k}\jury A &= \begin{pmatrix}a^k & ka^{k-1}b \\ ka^{k-1}c & ka^{k-1}d + k(k-1)a^{k-2}bc\end{pmatrix} \jury \begin{pmatrix}a & b \\ c & d\end{pmatrix}\\
    &= \begin{pmatrix}a^{(k+1)} & (k+1)a^{k}b \\ (k+1)a^{k}c & (k+1)a^{k}d + (k+1)ka^{k-1}bc\end{pmatrix}.
\end{align*}
This completes the proof.
\end{proof}

\begin{proof}[Proof of Theorem~\ref{main-thm-1} for $M=N=2$]
From Lemma~\ref{Cayley-Hamilton-Jury-2by2-lemma-0} and \eqref{defn:p-of-A-for-jury}, for all $p\in \C[z]$,
\begin{align}\label{Cayley-Hamilton-Jury-2by2-lemma-remark}
    p_{\jury}(A)=\begin{pmatrix}
        p(a) & b p'(a) \\
        c p'(a) & dp'(a) + bc p''(a)
    \end{pmatrix} \qquad \mbox{for all } A\in \C^{2\times 2}.
\end{align}
Therefore if $p(z):=(z-a)^{3}$ then $p_{\jury}(A)=0$. Thus the required Cayley--Hamilton degree can not exceed $3$, and the formula for the required polynomial is valid. Finally, if $b,c\neq 0$ then $q_{\jury}(A)\neq 0$ for $q$ either $(z-a)$ or $(z-a)^{2}$. That there can be no other linear or quadratic polynomials annihilating $A$ is obvious, completing the proof.
\end{proof}

\subsection*{Exploring a Hankel-type structure} The resolution of $M=N=2$ case shows that the action of a polynomial in $\C[z]$ over $\C^{2\times 2}$ involves derivatives at the first entry $a_{00}$ of the matrices. Moreover, there is a ``symmetric'' nature of Jury's product which plays an important role in extending the above case for the remaining cases. To formalize this, we need the following notation for integers $M,N\geq 1$ (which extends Definition~\ref{defn:jury-prod}(a)).

For $\ell=1,\dots, M+N-1$, define:
\begin{align}
\mathcal{K}^{(\ell)}_{M,N}:=\big{\{}(i,j)\in \mathcal{K}_{M,N}:i+j\leq \ell-1\big{\}}
\end{align}

Using this, we look at a matrix $A:=(a_{ij})\in \C^{M\times N}$ by observing its \textit{anti-diagonals}:
\begin{center}
\begin{tabular}{@{} >{\centering\arraybackslash}m{0.40\textwidth} @{\quad} >{\centering\arraybackslash}m{0.60\textwidth} @{}}
$\displaystyle
\begin{pmatrix}
a_{00} & a_{01} & a_{02} & \iddots \\
a_{10} & a_{11} & \iddots & \iddots \\
a_{20} & \iddots & \iddots & \iddots \\
\iddots & \iddots & \iddots &  \ddots
\end{pmatrix}$
&
\begin{minipage}{\linewidth}
the first anti-diagonal contains $a_{00}$,\\[2pt]
then the second contains $a_{10},a_{01}$,\\[2pt]
the third contains $a_{20},a_{11},a_{02}$,\\[2pt]
\(\vdots\)\\[2pt]
the \(\ell\)-th one contains \(a_{ij}\) for \(i+j=\ell-1\).
\end{minipage}
\end{tabular}
\end{center}

So there are exactly $M+N-1$ anti-diagonals in $A\in \C^{M\times N}$, and the indices in $\mathcal{K}^{(\ell)}_{M,N}$ refer to all the entries $a_{ij}$ up to the $\ell$-th anti-diagonal of $A$. This construction yields basic results for the Jury product in which the role of these anti-diagonals is crucial (which eventually helps in showing that the required Cayley--Hamilton degree can not exceed $M+N-1$).

\begin{lemma}\label{lemma-AB-prod-1}
Fix integers $M,N\geq 1$ and let $A:=(a_{ij})\in \C^{M\times N}$.
\begin{enumerate}[$(1)$]
\item For $\ell\in [1:M+N-2]$, suppose $B:=(b_{ij})\in \C^{M\times N}$ with $b_{ij}=0$ for all $(i,j)\in \mathcal{K}^{(\ell)}_{M,N}$. If $a_{00}=0$, then $(A\jury B)_{ij}=0$ for all $(i,j)\in \mathcal{K}^{(\ell+1)}_{M,N}$.
\item If $(z-a_{00})^{M+N-1}$ divides $p(z)\in \C[z]$, then $p_{\jury}(A)=0$.
\end{enumerate}
\end{lemma}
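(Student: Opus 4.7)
The plan is to prove part (1) by a direct unfolding of the convolution sum, and then to derive part (2) by iterating part (1) along the anti-diagonal filtration $\mathcal{K}^{(1)}_{M,N}\subseteq\mathcal{K}^{(2)}_{M,N}\subseteq\cdots\subseteq\mathcal{K}^{(M+N-1)}_{M,N}$.

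For part (1), I would fix $(i,j)$ with $i+j\leq\ell$ and expand
\[
(A\jury B)_{ij}=\sum_{p=0}^{i}\sum_{q=0}^{j}a_{p,q}\,b_{i-p,\,j-q}.
\]
The sole term with $p+q=0$ vanishes because $a_{00}=0$, while every other summand has $p+q\geq 1$, which forces $(i-p)+(j-q)\leq\ell-1$ and hence $b_{i-p,j-q}=0$ by hypothesis. This shows that the vanishing of $a_{00}$ together with the vanishing of $B$ on $\mathcal{K}^{(\ell)}_{M,N}$ pushes the vanishing of the product one anti-diagonal further. This is short, but it is the key propagation mechanism used below.

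For part (2), I would first invoke the multiplicativity \eqref{E:poly-act-prop} of polynomial evaluation to factor $p_{\jury}(A)=(A-a_{00}\I_{\jury})^{\jury(M+N-1)}\jury q_{\jury}(A)$, where $p(z)=(z-a_{00})^{M+N-1}q(z)$. Setting $B:=A-a_{00}\I_{\jury}$, which agrees with $A$ everywhere except that $B_{00}=0$, the problem reduces to proving $B^{\jury(M+N-1)}=0$. I would then argue by induction on $k\geq 1$ that $B^{\jury k}$ vanishes on all of $\mathcal{K}^{(k)}_{M,N}$. The base case $k=1$ is just $B_{00}=0$, and the inductive step applies part (1) to the factorization $B^{\jury(k+1)}=B\jury B^{\jury k}$: the first factor supplies the hypothesis $a_{00}=0$, and the second factor supplies the anti-diagonal vanishing.

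The argument is mostly mechanical bookkeeping, so I do not anticipate a real obstacle. The one point requiring attention is the terminal count: at $k=M+N-1$, any $(i,j)\in\mathcal{K}_{M,N}$ satisfies $i\leq M-1$ and $j\leq N-1$, hence $i+j\leq M+N-2$, so $\mathcal{K}^{(M+N-1)}_{M,N}$ exhausts the full index set and forces $B^{\jury(M+N-1)}=0$. This saturation is precisely what makes $M+N-1$ the natural candidate for the Cayley--Hamilton degree anticipated in Theorem~\ref{main-thm-1}.
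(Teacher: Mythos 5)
Your argument is correct and follows essentially the same route as the paper: part (1) by expanding the convolution sum and noting each term involves either $a_{00}=0$ or a $b$-entry on an earlier anti-diagonal, and part (2) by reducing (via the multiplicativity \eqref{E:poly-act-prop}) to showing $(A-a_{00}\I_{\jury})^{\jury(M+N-1)}=0$ and inducting along the anti-diagonal filtration using part (1). Your explicit factorization $p_{\jury}(A)=(A-a_{00}\I_{\jury})^{\jury(M+N-1)}\jury q_{\jury}(A)$ merely spells out a step the paper leaves implicit, and the terminal count at $k=M+N-1$ is handled exactly as in the paper.
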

\begin{proof} We begin with the proof of the first assertion.
\begin{enumerate}[$(1)$]
\item For every $(i,j)\in \mathcal{K}^{(\ell+1)}_{M,N}$, the entries $(A\jury B)_{ij}$ are sums over products of entries of $B$ indexed by $\mathcal{K}^{(\ell)}_{M,N}$ with entries of $A$, and the entry $a_{00}=0$ of $A$ with the entry $b_{ij}$ with $i+j= \ell+1$. Each of these products are therefore zero, and thus $(A\jury B)_{ij}=0$.

\item Considering \eqref{defn:p-of-A-for-jury}, it is sufficient to show that $A^{\jury (M+N-1)} = 0$ if $a_{00}=0$. It is obvious for $M=N=1$, and Lemma~\ref{Cayley-Hamilton-Jury-2by2-lemma-0} implies it for $M=N=2$. We use the following argument for other values of $M,N$.

As we mentioned earlier, we look at $A=(a_{ij})$ by observing its anti-diagonals. Assuming $a_{00}=0$, we claim that for $\ell \geq 1$, the entries on the $k$-th anti-diagonal for $k=1,2,\dots,\ell$ of $A^{\jury \ell}$ are all zeros. This is easy to verify for $\ell = 1,2$ (which we skip). We use induction to prove it for $\ell\geq 3$. Note that $A^{\jury \ell} = A^{\jury (\ell-1)}\jury A$. By induction, all the entries up to $(\ell-1)$-th anti-diagonal of $A^{\jury (\ell -1)}$ are zeros. Since $a_{00}=0$ and $A^{\jury \ell} = A^{\jury (\ell-1)}\jury A$, Lemma~\ref{lemma-AB-prod-1}$(1)$ implies that the $\ell$-th anti-diagonals of $A^{\jury \ell}$ are also zeros.

Finally, observe that all the entries in $A^{\jury (M+N-1)}$ (where $a_{00}=0$) are within the $M+N-1$ anti-diagonals in $A^{\jury (M+N-1)}$. Thus they all vanish, completing the proof.\qedhere
\end{enumerate}
\end{proof}

Lemma~\ref{lemma-AB-prod-1} therefore shows that the Cayley--Hamilton degree can not exceed $M+N-1$, and that polynomials $p_{A}(z):=(z-a_{00})^{M+N-1}$ are good candidates for the Cayley--Hamilton polynomials. Now to show that the degree is exactly $M+N-1$, we need some `combinatorial' preparation involving the formula for the $\kappa$-fold Jury product of matrices. We need the following notion of partitions over $\mathcal{K}_{M,N}$ (which is the nonnegative version of the one in Definition~\ref{defn:partitions}).

\begin{defn}[Partitions over $\mathcal{K}_{M,N}$]\label{k-power-entries:defn-1}
For integers $M,N,\kappa\geq 1$, and $(i,j)\in \mathcal{K}_{M,N}$, define
\[
\mathcal{P}_\kappa(i,j):=\{\mbox{multiset $S\subseteq \mathcal{K}_{M,N}$ of $\kappa$ elements $: \sum_{(p,q)\in S}(p,q)=(i,j)$}\}.
\]
Define $c_S(p,q)$ to denote the cardinality of $(p,q)\in S$, for all $\kappa,(i,j)$.
\end{defn}

We are ready to state the formula for the $\kappa$-th Jury power of any matrix in $\C^{M\times N}$. It generalizes Lemma~\ref{Cayley-Hamilton-Jury-2by2-lemma-0}, and involves the well-known multinomial coefficients.

\begin{lemma}\label{k-power-entries:thm}
Suppose $M,N,\kappa\geq 1$ are integers.
\begin{enumerate}[$(1)$]
    \item Fix $A_{p}:=(a_{ij}^{(p)})\in \C^{M\times N}$ for $p\in [1:\kappa]$. Then $A_1 \jury \dots \jury A_{\kappa}$ is given by:
\begin{align*}
    (A_1 \jury \dots \jury A_{\kappa})_{ij}=\sum_{\substack{s_1(i) + \dots + s_\kappa(i) = i \\ s_1(j) + \dots + s_\kappa(j) = j}}
\prod_{p=1}^\kappa a_{s_{p}(i),s_{p}(j)}^{(p)},
\end{align*}
for all $(i,j)\in \mathcal{K}_{M,N}$, where all $s_p(i),s_p(j)\geq 0$ are integers.
\item For $A=(a_{ij})\in \C^{M\times N}$, the formula for $A^{\jury \kappa}$ is given by:
\begin{align*}
\begin{pmatrix}A^{\jury \kappa}\end{pmatrix}_{ij}=\sum_{S\in \mathcal{P}_{\kappa}(i,j)} \frac{\kappa!}{\prod_{(m,n)\in \mathcal{K}_{M,N}}{c}_S(m,n)!} \prod_{(p,q)\in S} a_{p,q},
\end{align*}
for all $(i,j)\in \mathcal{K}_{M,N}$, and where the product over multiset $S$ is `with' multiplicity.
\end{enumerate}
\end{lemma}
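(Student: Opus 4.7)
The plan is to establish part (1) by induction on $\kappa$, and then derive part (2) as a direct specialization combined with an ordered-tuple versus multiset counting argument. The base case $\kappa=1$ of part (1) is trivial (the single matrix entry), while $\kappa=2$ is precisely Definition~\ref{defn:jury-prod}(b) after writing $s_1(i):=\ell$, $s_2(i):=i-\ell$ and similarly for $j$. Note that although the statement imposes only $s_p(i),s_p(j)\geq 0$, any tuple with $s_p(i)\geq M$ or $s_p(j)\geq N$ is automatically excluded by the constraint $\sum_p s_p(i)=i\leq M-1$, $\sum_p s_p(j)=j\leq N-1$, so no explicit range restriction is needed.

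For the inductive step, I would use associativity of the Jury product from \eqref{E:jury-property} to write $A_1\jury\dots\jury A_{\kappa+1}=(A_1\jury\dots\jury A_{\kappa})\jury A_{\kappa+1}$, then unfold the outer product via Definition~\ref{defn:jury-prod}(b):
\[
(A_1\jury\dots\jury A_{\kappa+1})_{ij}=\sum_{\ell=0}^{i}\sum_{k=0}^{j}(A_1\jury\dots\jury A_{\kappa})_{\ell,k}\,a^{(\kappa+1)}_{i-\ell,\,j-k}.
\]
Substituting the inductive hypothesis for the inner factor and relabeling $s_{\kappa+1}(i):=i-\ell$, $s_{\kappa+1}(j):=j-k$, the constraints $\sum_{p=1}^{\kappa}s_p(\ell)=\ell$ and $\ell+s_{\kappa+1}(i)=i$ merge to give $\sum_{p=1}^{\kappa+1}s_p(i)=i$, and analogously for $j$, closing the induction.

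For part (2), I would specialize $A_p=A$ for all $p\in[1:\kappa]$ in the formula from part (1). The right-hand side then becomes a sum over \emph{ordered} $\kappa$-tuples $\bigl((s_p(i),s_p(j))\bigr)_{p=1}^{\kappa}\in\mathcal{K}_{M,N}^{\kappa}$ with coordinate-wise sum $(i,j)$, each contributing $\prod_{p=1}^{\kappa}a_{s_p(i),\,s_p(j)}$. I would then group ordered tuples by their underlying multiset $S\in\mathcal{P}_\kappa(i,j)$: every tuple giving rise to $S$ contributes the same monomial $\prod_{(p,q)\in S}a_{p,q}$ (with multiplicity), and the number of orderings of a multiset with element multiplicities $c_S(m,n)$ is the multinomial coefficient $\kappa!/\prod_{(m,n)\in\mathcal{K}_{M,N}}c_S(m,n)!$. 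Collecting terms yields the stated formula.

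The main obstacle I anticipate is bookkeeping rather than conceptual: ensuring a clean relabeling in the inductive step so that the double sum from Definition~\ref{defn:jury-prod}(b) merges with the hypothesis constraint without loss of terms, and verifying the ordered-to-multiset bijection (with the correct multinomial weight) so that no overcounting or undercounting occurs. No further analytic input is required; the argument rests entirely on associativity of $\jury$ and the standard combinatorial identification of compositions with weighted multisets.
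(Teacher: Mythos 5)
Your proposal is correct and follows essentially the same route as the paper: induction on $\kappa$ via associativity of $\jury$ and Definition~\ref{defn:jury-prod}(b) for part (1), then specialization $A_p=A$ followed by grouping the ordered $\kappa$-tuples (which the paper calls $\mathcal{P}_\kappa'(i,j)$) by their underlying multiset and counting orderings with the multinomial coefficient for part (2). The only cosmetic difference is that the paper runs the inductive step from the right-hand side toward the left, while you unfold the left-hand side $(A_1\jury\cdots\jury A_\kappa)\jury A_{\kappa+1}$ forward; the content is identical.
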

\begin{proof} We begin with the proof of the first assertion.
\begin{enumerate}[$(1)$]
    \item For this proof, possibly a convenient way to state the result is the following: the Jury product $A_1 \jury \dots \jury A_{\kappa}\in \C^{M\times N}$ is given by
\begin{align}\label{k-power-entries:lemma-1:eqn-1}
    (A_1 \jury \dots \jury A_{\kappa})_{ij}=\sum_{S\in \mathcal{P}_\kappa'(i,j)} \prod_{p=1}^{\kappa} a_{s_p(i),s_p(j)}^{(p)}
\end{align}
where $S:=\big{(}(s_1(i),s_1(j)),\dots,(s_\kappa(i),s_\kappa(j))\big{)}\in \mathcal{P}_{\kappa}'(i,j)\subseteq \mathcal{K}_{M,N}^{k}$ is a $\kappa$ element ordered multi-subset such that $\sum_{(p,q)\in S}(p,q)=(i,j)$. We prove this equivalent statement.

The claim is true for $\kappa=1,2$. Suppose it holds for all $\ell \in [1:\kappa-1]$; we show that it holds for $\kappa\geq 3$. In each $\prod_{p=1}^{\kappa} a_{s_p(i),s_p(j)}^{(p)}$ on the right hand side of \eqref{k-power-entries:lemma-1:eqn-1}, the entry $a^{(\kappa)}_{s_\kappa(i),s_\kappa(j)}$ is such that $({s_\kappa(i),s_\kappa(j)})\in [0:i]\times [0:j]$. Therefore, the right hand side of \eqref{k-power-entries:lemma-1:eqn-1} can be re-written as
\begin{align*}
    \sum_{S\in \mathcal{P}_\kappa'(i,j)} \prod_{p=1}^{\kappa} a_{s_p(i),s_p(j)}^{(p)} = \sum_{\alpha=0}^{i}\sum_{\beta=0}^{j} \bigg{(} \sum_{S\in \mathcal{P}_{\kappa-1}'(\alpha,\beta)} \prod_{p=1}^{\kappa-1} a_{s_p(i),s_p(j)}^{(p)}\bigg{)} a^{(\kappa)}_{i-\alpha,j-\beta}
\end{align*}
where $\mathcal{P}_{\kappa-1}'(\alpha,\beta)$ is the collection of all $\kappa-1$ element ordered multi-subset $S$ of elements from $\mathcal{K}_{M,N}$ such that $\sum_{(p,q)\in S}(p,q)=(\alpha,\beta)$, where $S:=\big{(}(s_1(i),s_1(j)),\dots,(s_{\kappa-1}(i),s_{\kappa-1}(j))\big{)}$. Therefore we have,
\[
(A_1 \jury \dots \jury A_{\kappa-1})_{\alpha,\beta}=\sum_{S\in \mathcal{P}_{\kappa-1}'(\alpha,\beta)} \prod_{p=1}^{\kappa-1} a_{s_p(i),s_p(j)}^{(p)}
\]
And so, we have
\begin{align*}
&\sum_{\alpha=0}^{i}\sum_{\beta=0}^{j} \bigg{(} \sum_{s\in \mathcal{P}_{\kappa-1}'(\alpha,\beta)} \prod_{p=1}^{\kappa-1} a_{s_p(i),s_p(j)}^{(p)}\bigg{)} a^{(\kappa)}_{i-\alpha,j-\beta}\\
= &\sum_{\alpha=0}^{i}\sum_{\beta=0}^{j} (A_1 \jury \dots \jury A_{\kappa-1})_{\alpha,\beta}~ a^{(\kappa)}_{i-\alpha,j-\beta}.
\end{align*}
Then we use induction for $\kappa'=2$, and the associativity of Jury's product to obtain that above equals:
\begin{align*}
\big{(}(A_1 \jury \dots \jury A_{\kappa-1})\jury A_{\kappa}\big{)}_{ij}&= (A_1 \jury \dots \jury A_{\kappa-1}\jury A_{\kappa})_{ij}.
\end{align*}

\item Take $A=A_1=\dots=A_\kappa$ and note that the term $\prod_{p=1}^{\kappa} a_{s_p(i),s_p(j)}^{(p)}$ for each ordered multiset $S\in \mathcal{P}_{\kappa}'(i,j)$ repeats exactly \[
\frac{\kappa!}{\prod_{(m,n)\in \mathcal{K}_{M,N}}{c}_S(m,n)!}\]
many times, completing the proof.\qedhere
\end{enumerate}
\end{proof}

As a preliminary exercise and a sanity check (before we prove the main result) we first present a proof of Lemma~\ref{lemma-AB-prod-1}(2) via Lemma~\ref{k-power-entries:thm}.

\begin{proof}[Proof of Lemma~\ref{lemma-AB-prod-1}(2) via Lemma~\ref{k-power-entries:thm}] We aim to show that if $A:=(a_{ij})\in \C^{M\times N}$ with $a_{00}=0$, then $A^{\jury (M+N-1)}=0$ (which is sufficient). The result follows once we see that for each $S\in \mathcal{P}_{M+N-1}(i,j)$, we must have $(0,0)\in S$. This is so, since $\kappa=M+N-1$ is large enough, so that one has to include $(0,0)\in S$ for $S$ to have $M+N-1$ elements (counted with multiplicities), for each $(i,j)\in \mathcal{K}_{M,N}$. As an example, consider the worst case scenario when $(i,j)=(M-1,N-1)$. Then to avoid $(0,0)\in S$, $S$ must contain $M-1$ copies of $(1,0)$, and $N-1$ copies of $(0,1)$. However these additions only make $M+N-2$ elements in $S$; so it requires one more element. Therefore $S$ must also contain $(0,0)$, concluding the proof.
\end{proof}

These preparations (finally$!$) have brought us to the following main proof:

\begin{proof}[Proof of Theorem~\ref{main-thm-1}]
We know from Lemma~\ref{lemma-AB-prod-1}(2) that $p_{A}(z)=(z-a_{00})^{M+N-1}$ annihilates $A=(a_{ij})$. Therefore the Cayley--Hamilton degree $\leq M+N-1$. That it is exactly $M+N-1$ follows using a matrix with all entries $1$, except zero at the first position, i.e., suppose
\[
A:=\I_{\circ}-\I_{\jury} \in \C^{M\times N}
\]
where $\I_{\circ}$ and $\I_{\jury}$ respectively are the identity for the Schur and the Jury products. Then the Jury powers $A^{\jury \ell}\neq 0$ for $\ell=1,\dots,M+N-2$. For completeness, we give a proof using (the notations in) Lemma~\ref{k-power-entries:thm}(2). For any $(i_0,j_0)\in \mathcal{K}_{M,N}$ with $i_0+j_0=\ell$, we have that there exists $S\in \mathcal{P}_{\ell}(i_0,j_0)$ such that $(0,0)\not\in S$. Namely, this $S$ is given by the collection $\Big{\{} \{(1,0)^{(i_0)},(0,1)^{(j_0)}\} \Big{\}}$. Therefore, since the entries of the aforementioned $A$ are positive for all $(i,j)\neq (0,0)$, we have that $(A^{\jury \ell})_{i_0,j_0}\neq 0$ for all $(i_0,j_0)\in \mathcal{K}_{M,N}$ with $i_0+j_0=\ell$. Therefore the Cayley--Hamilton degree is exactly $M+N-1$.
\end{proof}

\begin{remark}[Extension to unital commutative rings]\label{Rem:ext-to-rings}
We conclude this subsection with a remark regarding the proof of Theorem~\ref{main-thm-1} over any unital commutative ring $R$ of characteristic zero (in place of $\C$). Note that every step for the proof of Theorem~\ref{main-thm-1} goes through verbatim for $R$, except possibly the semi-final sentence in the proof of Theorem~\ref{main-thm-1} immediately above. For $R$, this argument changes to: ``$\dots$ since the entries of the aforementioned $A$ are all $1$ for all $(i,j)\neq (0,0)$, and that the ring has characteristic zero, we have that $(A^{\jury \ell})_{i_0,j_0}\neq 0$ for all $(i_0,j_0)\in \mathcal{K}_{M,N}$ with $i_0+j_0=\ell$.''
\end{remark}

\subsection{Minimal polynomials} \label{Ssub:min-poly}

Notice that the preceding discussion of Cayley--Hamilton polynomial and degree relied only on essentially extending the Jury product (or convolution of matrices) to its $\kappa$-fold product formula (beginning with the ``visual'' Lemma~\ref{lemma-AB-prod-1} then to Lemma~\ref{k-power-entries:thm} which is more explicit). For the more delicate minimal polynomials, however, it is necessary to understand these through more quantitative identities relating polynomials and matrices (Theorem~\ref{T:poly-on-mat}). We begin with a formal definition.

\begin{defn}[Minimal Cayley--Hamilton polynomials for the Jury product]
Fix integers $M,N\geq 1$, and let $A\in \C^{M\times N}$. We call a nonzero monic $p\in \C[z]$ a minimal Cayley--Hamilton polynomial of $A$ (for the Jury product) if $p_{\jury}(A)=0$, and there does not exist a monic nonzero $q\in \C[z]$ with $\deg q<\deg p$ such that $q_{\jury}(A)=0$.
\end{defn}

The algebraic structure induced by the Jury product over $\C^{M \times N}$ shows that a minimal polynomial must be of the form $(z-a_{00})^{\kappa}$ for any given $A:=(a_{ij})\in \C^{M\times N}$ for some integer $\kappa\geq 1$. Formally:

\begin{lemma}\label{Cayley-Hamilton-Jury-kbyk-thm}
Suppose $M,N\geq 1$ are integers. If $p(z)\in \C[z]$ is a minimal Cayley--Hamilton polynomial for $A:=(a_{ij})\in \C^{M\times N}$ for the Jury product, then $p(z)=(z-a_{00})^{\kappa}$ for some $\kappa\in [1:M+N-1]$.
\end{lemma}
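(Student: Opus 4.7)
The plan is to recognize that the evaluation map $p \mapsto p_{\jury}(A)$ is a unital ring homomorphism from $\C[z]$ into $(\C^{M\times N}, +, \jury)$, and that its kernel is therefore an ideal whose generator -- by the PID structure of $\C[z]$ -- is forced to be a power of $(z-a_{00})$.

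First, I would set $I_A := \{p \in \C[z] : p_{\jury}(A) = 0\}$ and check that $I_A$ is an ideal of $\C[z]$. The two identities in \eqref{E:poly-act-prop}, namely $(p+q)_{\jury}(A) = p_{\jury}(A) + q_{\jury}(A)$ and $(pq)_{\jury}(A) = p_{\jury}(A) \jury q_{\jury}(A)$, respectively give additive closure and absorption under multiplication by an arbitrary polynomial (since $0 \jury q_{\jury}(A) = 0$). Theorem~\ref{main-thm-1} ensures that $(z-a_{00})^{M+N-1} \in I_A$, so this ideal is nonzero.

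Since $\C[z]$ is a principal ideal domain, $I_A = (m)$ for a unique monic polynomial $m$, and a monic polynomial in $I_A$ has smallest degree if and only if it equals $m$. Thus any minimal Cayley--Hamilton polynomial of $A$ (in the sense defined above) must coincide with $m$. The containment $(z-a_{00})^{M+N-1} \in (m)$ forces $m \mid (z-a_{00})^{M+N-1}$, so $m = (z-a_{00})^{\kappa}$ for some integer $0 \leq \kappa \leq M+N-1$. The value $\kappa = 0$ is excluded because $1_{\jury}(A) = \I_{\jury} \neq 0$, so in fact $\kappa \in [1:M+N-1]$, as claimed.

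The argument is almost entirely structural, and I do not expect a genuine obstacle; the only point requiring mild care is to confirm that $p \mapsto p_{\jury}(A)$ is a well-defined unital ring homomorphism, which is precisely the content of \eqref{E:poly-act-prop} together with the identity $1_{\jury}(A) = \I_{\jury}$. Everything else follows from the standard theory of ideals over a PID combined with the already-established Cayley--Hamilton-type Theorem~\ref{main-thm-1}.
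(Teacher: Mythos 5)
Your proposal is correct and is essentially the paper's argument in structural clothing: the paper performs the Euclidean division of $(z-a_{00})^{M+N-1}$ by $p$ directly and derives a contradiction from a nonzero remainder, which is exactly the content of your ``kernel is an ideal in the PID $\C[z]$, hence generated by the monic minimal-degree element dividing $(z-a_{00})^{M+N-1}$'' step, both resting on the homomorphism properties \eqref{E:poly-act-prop} and Theorem~\ref{main-thm-1}. Your explicit exclusion of $\kappa=0$ via $1_{\jury}(A)=\I_{\jury}\neq 0$ is a nice touch that the paper leaves implicit.
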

\begin{proof}
We know that for $t(z):=(z-a_{00})^{M+N-1}$, we have $t_{\jury}(A)=0$. Therefore $\deg p \leq \deg t=M+N-1$. Then there exist unique $s,r\in \C[z]$ with either $r=0$ or $\deg r<\deg p$, such that $t(z)=p(z)s(z)+r(z)$. If $r\neq 0$ then we have
\begin{align*}
    t_{\jury}(A)=p_{\jury}(A)\jury s_{\jury}(A)+r_{\jury}(A) \implies r_{\jury}(A)=0.
\end{align*}
Since $p\neq 0$ and of the smallest degree with $p_{\jury}(A)=0$, this is a contradiction. Therefore $r=0$. Moreover, as $t$ has a unique linear factor, $p$ has the required form.
\end{proof}

The computation of the degree $\kappa$ of the minimal polynomial may have a delicate relation with matrix entries. For instances, see the following remark.

\begin{remark}\label{Cayley-Hamilton-Jury-2by2-lemma-remark-2}
Recalling \eqref{Cayley-Hamilton-Jury-2by2-lemma-remark} and the proofs of Lemma~\ref{Cayley-Hamilton-Jury-2by2-lemma-0}, it can be seen that the minimal polynomial $p\in \C[z]$ for $A:=\begin{pmatrix}a & b \\ c & d\end{pmatrix}\in \C^{2\times 2}$ is
\begin{align*}
    p(z):=\begin{cases}
        z-a & \mbox{ if }b=c=d=0,\\
        (z-a)^2 & \mbox{ if either exactly one of $b,c=0$, or $b=c=0\neq d$,}\\
        (z-a)^3 &\mbox{ if none of $b,c$ is zero}.
    \end{cases}
\end{align*}
\end{remark}

The minimal polynomials for general $M, N \geq 1$ requires a quantitative understanding on how a generic polynomial acts on a matrix in the ring equipped with the Jury product, thereby yielding the necessary polynomial-matrix identities. Both aspects are formalized in the next result.

\begin{utheorem}\label{T:poly-on-mat}
Fix integers $M,N\geq 1$, and $A=(a_{ij})\in \C^{M\times N}$.
\begin{enumerate}[$(a)$]
    \item For a polynomial $f\in \C[z]$, we have $\big{(}f_\jury(A)\big{)}_{00}=f(a_{00})$, and for all nonzero $(i,j)\in \mathcal{K}_{M,N}$,
\begin{align*}
   \big{(}f_{\jury}(A)\big{)}_{ij} = \sum_{\ell = 1}^{i+j} f^{(\ell)}(a_{00}) \sum_{S\in \mathcal{P}_{\ell}^*(i,j)} \frac{1}{\prod_{(m,n)\in \mathcal{K}_{M,N}^*} c_S(m,n)!} \prod_{(p,q)\in S} a_{p,q}
\end{align*}
where the product over multiset $S$ is `with' multiplicity.
\item The polynomial $f(z):=(z-a_{00})^{\kappa}$ is the minimal polynomial for $A$ for the Jury product, if $\kappa\in [1:M+N-2]$ is the smallest such that
\begin{align*}
    \sum_{S\in \mathcal{P}_{\kappa}^*(i,j)} \frac{1}{\prod_{(m,n)\in \mathcal{K}_{M,N}^*} c_S(m,n)!} \prod_{(p,q)\in S} a_{p,q} = 0
\end{align*}
for all $(i,j)\in \mathcal{K}_{M,N}$ with $i+j\geq\kappa$, where the product over multiset $S$ is `with' multiplicity. Moreover, if no such $\kappa \in [1:M+N-2]$ exists, then $(z-a_{00})^{M+N-1}$ is the minimal polynomial.
\end{enumerate}
\end{utheorem}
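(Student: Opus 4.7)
The plan is to first establish part $(a)$ via a binomial-type decomposition in the Jury ring, building on Lemma~\ref{k-power-entries:thm}(2), and then deduce part $(b)$ as an immediate corollary by specializing to $f(z)=(z-a_{00})^{\kappa}$.

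For part $(a)$, I would decompose $A = a_{00}\I_{\jury} + A''$, where $A''\in\C^{M\times N}$ agrees with $A$ except $a''_{00}=0$. Because $\I_{\jury}$ is the $\jury$-identity and scalar multiplication commutes with $\jury$ by \eqref{E:jury-property}, the binomial theorem holds in the commutative ring $(\C^{M\times N},+,\jury)$:
\[
A^{\jury k} \;=\; \sum_{j=0}^{k}\binom{k}{j}\, a_{00}^{k-j}\, (A'')^{\jury j}.
\]
Applying Lemma~\ref{k-power-entries:thm}(2) to $(A'')^{\jury j}$, every multiset $S\in\mathcal{P}_j(i,j)$ that contains $(0,0)$ contributes a factor $a''_{00}=0$ to its product, so only $S\in\mathcal{P}^*_j(i,j)$ survive, and on these $a''_{p,q}=a_{p,q}$. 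Writing $f(z)=\sum_k b_k z^k$ and swapping the order of summation in the resulting triple sum, the scalar coefficient attached to each index $\ell$ becomes
\[
\ell!\sum_{k\geq\ell} b_k \binom{k}{\ell}\, a_{00}^{k-\ell} \;=\; f^{(\ell)}(a_{00}),
\]
which is exactly the claimed Taylor factor (and cancels the $j!$ from Lemma~\ref{k-power-entries:thm}(2)). The outer sum truncates at $\ell=i+j$ because every element of $\mathcal{K}^*_{M,N}$ has coordinate sum at least $1$, so $\mathcal{P}^*_\ell(i,j)=\emptyset$ whenever $\ell>i+j$. The $(0,0)$-entry formula is immediate from $(A^{\jury k})_{00}=a_{00}^k$.

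For part $(b)$, Lemma~\ref{Cayley-Hamilton-Jury-kbyk-thm} already constrains the minimal polynomial to the form $f(z)=(z-a_{00})^{\kappa}$ with $\kappa\in[1:M+N-1]$. Specializing part $(a)$ to this $f$ and using $f^{(\ell)}(a_{00})=\kappa!\,\delta_{\ell,\kappa}$, I would obtain
\[
\big(f_{\jury}(A)\big)_{ij} \;=\; \kappa!\sum_{S\in\mathcal{P}^*_{\kappa}(i,j)}\frac{1}{\prod_{(m,n)\in\mathcal{K}^*_{M,N}}c_S(m,n)!}\prod_{(p,q)\in S}a_{p,q}
\]
whenever $i+j\geq\kappa$, while $\big(f_{\jury}(A)\big)_{ij}=0$ automatically otherwise (including at $(0,0)$, since $f(a_{00})=0$, and whenever $i+j<\kappa$, since then $\mathcal{P}^*_\kappa(i,j)=\emptyset$). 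Hence the identity $f_{\jury}(A)=0$ is precisely the system of scalar equations displayed in the statement, and minimality of $\kappa$ is exactly the minimality of the annihilating integer power of $(z-a_{00})$. If no such $\kappa\in[1:M+N-2]$ exists, Theorem~\ref{main-thm-1} already guarantees that $\kappa=M+N-1$ annihilates $A$, which takes care of the fallback case.

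The only delicate bookkeeping lies in part $(a)$: recognizing that the Jury ring admits a genuine binomial theorem, identifying the inner binomial sum as the Taylor coefficient $f^{(\ell)}(a_{00})/\ell!$, and systematically excising the $(0,0)$-index from all partitions after the passage from $A$ to $A''$. Part $(b)$ then requires no further input beyond $(a)$ and Lemma~\ref{Cayley-Hamilton-Jury-kbyk-thm}.
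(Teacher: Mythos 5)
Your proposal is correct. For part $(a)$ you take a mildly different route from the paper: the paper fixes a monomial $f(z)=z^\eta$, applies Lemma~\ref{k-power-entries:thm}(2) directly to $A^{\jury\eta}$, and splits the sum over $\mathcal{P}_\eta(i,j)$ according to the multiplicity $c_S(0,0)$, pulling out $\tfrac{\eta!}{(\eta-\ell)!}a_{00}^{\eta-\ell}=f^{(\ell)}(a_{00})$ and then passing to general $f$ by linearity; you instead write $A=a_{00}\I_{\jury}+A''$, invoke the binomial theorem in the commutative Jury ring, apply the lemma only to $(A'')^{\jury\ell}$ (where all multisets containing $(0,0)$ die automatically), and resum over general $f$ to identify $\ell!\sum_{k\ge\ell}b_k\binom{k}{\ell}a_{00}^{k-\ell}=f^{(\ell)}(a_{00})$. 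The two computations are equivalent---the ring-level binomial identity is exactly the grouping by $c_S(0,0)$ in disguise---but your packaging is arguably cleaner, since the excision of the index $(0,0)$ happens at the level of the matrix $A''$ rather than inside the partition sum, and you avoid monomial-by-monomial bookkeeping. For part $(b)$ your argument matches the paper's in substance (specialize $(a)$ to $(z-a_{00})^{\kappa}$, use Lemma~\ref{Cayley-Hamilton-Jury-kbyk-thm} to restrict the shape of the minimal polynomial, and Theorem~\ref{main-thm-1} for the fallback case), and it is in fact spelled out a bit more completely than in the paper: you make explicit that the displayed vanishing condition is \emph{equivalent} to $(z-a_{00})^{\kappa}$ annihilating $A$ (the $(0,0)$ entry and the entries with $i+j<\kappa$ vanish for free), so minimality of $\kappa$ in the condition really is minimality of the annihilating power. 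The only cosmetic blemish is the double use of the symbol $j$ as both a matrix index and a binomial summation index, which should be fixed in a final write-up but is harmless.
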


\begin{proof} We begin with the proof of the first assertion.
\begin{enumerate}[$(a)$]
\item Suppose $f(z)=z^\eta$ for integer $\eta\geq 1$. Then $\big{(}f_{\jury}(A)\big{)}_{00}=a_{00}^{\eta}=f(a_{00})$. Now for $i+j\geq 1$, compute the following using Lemma~\ref{k-power-entries:thm} and partitioning the sum over different values of $c_{S}(0,0)$, which is the cardinality of $(0,0)$ in $S$:
\begin{align*}
f_{\jury}(A)_{ij}&=(A^{\jury \eta})_{ij} = \sum_{S\in\mathcal{P}_{\eta}(i,j)}
\frac{\eta!}{\prod_{(m,n)\in \mathcal{K}_{M,N}} c_S(m,n)!}
\prod_{(p,q)\in S} a_{p,q} \\
& = \sum_{\ell = 1}^{\min(\eta,i+j)} \sum_{\substack{S\in \mathcal{P}_{\eta}(i,j):\\c_S(0,0)=\eta-\ell}} \frac{\eta!}{\prod_{(m,n)\in \mathcal{K}_{M,N}} c_S(m,n)!} \prod_{(p,q)\in S} a_{p,q}.
\end{align*}
Then, observe that the terms with powers of $a_{00}$ along with the integer coefficients can be taken out of the inner sum (of the double sum) and that the denominators of the multinomial coefficients are over nonzero indices in $\mathcal{K}_{M,N}$. This yields that the above equals:
\begin{align*}
&\sum_{\ell = 1}^{\min(\eta,i+j)} \sum_{\substack{S\in \mathcal{P}_{\eta}(i,j):\\c_S(0,0)=\eta-\ell}} \frac{\eta!}{(\eta-\ell)!\prod_{(m,n)\in \mathcal{K}_{M,N}^*} c_S(m,n)!} a_{00}^{\eta-\ell} \prod_{(0,0)\neq (p,q)\in S} a_{p,q}\\
= &\sum_{\ell = 1}^{\min(\eta,i+j)} \frac{\eta!}{(\eta-\ell)!} a_{00}^{\eta-\ell}
\sum_{\substack{S\in \mathcal{P}_{\eta}(i,j):\\c_S(0,0)=\eta-\ell}} \frac{1}{\prod_{(m,n)\in \mathcal{K}_{M,N}^*} c_S(m,n)!} \prod_{(0,0)\neq (p,q)\in S} a_{p,q}.
\end{align*}
The final step is to write the first term inside the outer sum as an appropriate derivatives at $a_{00}$, update the index for the inner sum to $\mathcal{P}_{\ell}^*(i,j)$, and appropriately update the product index as well. This yields:
\begin{align*}
f_{\jury}(A)_{ij}& = \sum_{\ell = 1}^{\min(\eta,i+j)} f^{(\ell)}(a_{00}) \sum_{S\in \mathcal{P}_{\ell}^*(i,j)} \frac{1}{\prod_{(m,n)\in \mathcal{K}_{M,N}^*} c_S(m,n)!} \prod_{(p,q)\in S} a_{p,q}\\
& = \sum_{\ell = 1}^{i+j} f^{(\ell)}(a_{00}) \sum_{S\in \mathcal{P}_{\ell}^*(i,j)} \frac{1}{\prod_{(m,n)\in \mathcal{K}_{M,N}^*} c_S(m,n)!} \prod_{(p,q)\in S} a_{p,q},
\end{align*}
where $f^{(\ell)}$ denotes the $\ell$-th order derivative of $f$. This computation, combined with \eqref{defn:p-of-A-for-jury} and the linearity of the derivatives, completes the required proof.

\item Suppose $\kappa\in [1:M+N-2]$ is the smallest such that $f(z)=(z-a_{00})^{\kappa}$ is the minimal polynomial. From the proof of Lemma~\ref{lemma-AB-prod-1} one can deduce that for all $(i,j)\in \mathcal{K}_{M,N}$ such that $i+j<\kappa$ we have $\big{(}f_{\jury}(A)\big{)}_{ij}=0$. On the other hand, from Theorem~\ref{T:poly-on-mat}$(a)$, we must have
\begin{align*}
\big{(}f_{\jury}(A)\big{)}_{ij}= \kappa!\sum_{S\in \mathcal{P}_{\kappa}^*(i,j)} \frac{1}{\prod_{(m,n)\in \mathcal{K}_{M,N}^*} c_S(m,n)!} \prod_{(p,q)\in S} a_{p,q} = 0,
\end{align*}
for all $(i,j)\in \mathcal{K}_{M,N}$ with $i+j\geq \kappa$. This completes the proof.

Finally, that no such $\kappa$ exists, and so $(z-a_{00})^{M+N-1}$ is the minimal polynomial, is Theorem~\ref{main-thm-1}.\qedhere
\end{enumerate}
\end{proof}

\begin{remark}[Definitions~\ref{jury-transform-1} and \ref{jury-transform-2}, and extension to fields]
We record two observations:
\begin{enumerate}
    \item In addition to formulating minimal polynomials, Theorem~\ref{T:poly-on-mat}$(a)$ fulfills another purpose noted earlier -- namely, it ensures that transforms in Definitions~\ref{jury-transform-1} and \ref{jury-transform-2} are compatible with the convolution of matrices.
    \item The discussion of minimal polynomials extends verbatim to any field of characteristic zero in place of~$\C$, thus extending the note in Remark~\ref{Rem:ext-to-rings}.
\end{enumerate}
\end{remark}

Let us demonstrate our findings on minimal polynomials with a few examples revisiting the $2\times 2$ case that we saw in the beginning of this subsection.

\begin{example} \
\begin{enumerate}[$(i)$]
    \item If $z-a_{00}$ is the required minimal polynomial, then all entries $a_{ij}$ for $(i,j)\neq (0,0)$ must be zero. And the converse also holds.

    \item For $(z-a_{00})^{2}$ to be the minimal polynomial, we must have $a_{ij}\neq 0$ for some $(i,j)\neq (0,0)$, and moreover must have,
\begin{align*}
    \sum_{S\in \mathcal{P}_{2}^*(i,j)} \frac{1}{\prod_{(m,n)\in \mathcal{K}_{M,N}^*} c_S(m,n)!} \prod_{(p,q)\in S} a_{p,q} = 0
\end{align*}
for all $(i,j)$ with $i+j\geq 2$. These conditions for $A:=(a_{ij})\in \C^{2\times 2}$ mean that one of $a_{0,1},a_{1,0},a_{1,1}$ is nonzero, and that $a_{0,1}a_{1,0}=0$. Therefore, either exactly one of $a_{0,1}$ and $a_{1,0}$ is zero, or $0=a_{0,1}=a_{1,0}\neq a_{1,1};$ precisely what we saw in Remark~\ref{Cayley-Hamilton-Jury-2by2-lemma-remark-2}.
    \item If $(z-a_{00})^{3}$ is minimal then we must have
    \begin{align*}
    \sum_{S\in \mathcal{P}_{3}^*(i,j)} \frac{1}{\prod_{(m,n)\in \mathcal{K}_{M,N}^*} c_S(m,n)!} \prod_{(p,q)\in S} a_{p,q} = 0
    \end{align*}
    for all $(i,j)\in \mathcal{K}_{M,N}$ with $i+j\geq 3$, but there exists some $(i,j)\in \mathcal{K}_{M,N}$ with $i+j\geq 2$ such that
    \begin{align*}
    \sum_{S\in \mathcal{P}_{2}^*(i,j)} \frac{1}{\prod_{(m,n)\in \mathcal{K}_{M,N}^*} c_S(m,n)!} \prod_{(p,q)\in S} a_{p,q} \neq 0
    \end{align*}
These conditions for $A\in \C^{2\times 2}$ mean that $a_{1,0}a_{0,1}\neq 0$, precisely what we have obtained in Remark~\ref{Cayley-Hamilton-Jury-2by2-lemma-remark-2}.
\end{enumerate}
\end{example}

\subsection{Jury's product versus the standard and Schur products. II}\label{R:CH-for-unbound-mat}

The Cayley--Hamilton property for the Jury product depends critically on the finiteness of the matrix \emph{dimension}, rather than just on the finiteness of the number of nonzero entries in a matrix. Let us explain.

Consider the union/direct limit of all finite dimensional matrices:
\[
\C^{\N\times \N}_{00} := \bigcup_{M,N=1}^{\infty}\C^{M\times N}.
\]
This space can be interpreted as the collection of semi-infinite--by--semi-infinite matrices with finitely many nonzero entries, indexed by
$
\mathcal{K}_{\infty, \infty} := [0:\infty)^2.
$
It admits the usual matrix product, and the Schur product, and the Jury product with properties similar to \eqref{E:jury-property} through \eqref{E:jury-inverse-2}. (The latter two products, in fact, extend naturally to all of $\C^{\N\times \N}$.) Thus, it is natural to ask whether Cayley--Hamilton-type theorems hold in this setting (as in Question~\ref{question}(1)). One knows that for every $A \in \C^{\N\times \N}_{00}$, there exist polynomials $p_1, p_2\in \C[z]$ such that
\begin{align*}
p_1(A) &= 0~ \mbox{ (for the standard product)}, \\
\mbox{and}\quad p_2[A] &= 0~ \mbox{ (for the Schur product)}.
\end{align*}
In fact, one can ensure that $p_1$ and $p_2$ are of the forms in equations~\eqref{eq:ch-std} and \eqref{eq:ch-schur} respectively. To the contrary, however, no analogous result holds for the Jury product:
\begin{prop}
Suppose $A=(a_{ij}) \in \C^{\N \times \N}_{00} \setminus \{{\bf 0}_{\N\times \N}\}$ is entrywise nonnegative and not a scalar multiple of the Jury product identity. Then for any integer $\kappa\geq 1$, the polynomial
\[
\mbox{$p(z):=(z-a_{00})^{\kappa}$ does not annihilate $A$.}
\]
\end{prop}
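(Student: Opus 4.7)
The plan is to exhibit a specific entry of $p_{\jury}(A)$ that is strictly positive, using the entrywise nonnegativity of $A$ and the explicit formula for a polynomial acting via convolution given by Theorem~\ref{T:poly-on-mat}$(a)$. The key observation is that, for $f(z)=(z-a_{00})^{\kappa}$, the derivatives $f^{(\ell)}(a_{00})$ all vanish except at $\ell=\kappa$, where $f^{(\kappa)}(a_{00})=\kappa!$. Hence the formula in Theorem~\ref{T:poly-on-mat}$(a)$ collapses to a single-layer sum of \emph{nonnegative} monomials in the entries of $A$, and we only need to produce one such monomial that is strictly positive.

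Concretely, by the hypothesis $A$ is nonzero and not a scalar multiple of $\I_{\jury}$, so there exists some $(i_0,j_0)\neq (0,0)$ with $a_{i_0,j_0}>0$. Since $A\in\C^{\N\times\N}_{00}$ has finite support, I would fix integers $M>\kappa i_0$ and $N>\kappa j_0$ large enough that $A$ sits inside $\C^{M\times N}$ together with all the Jury powers $A^{\jury \ell}$ for $\ell\le \kappa$; this reduces the statement to the finite-dimensional formula of Theorem~\ref{T:poly-on-mat}$(a)$. Applied at the index $(i,j)=(\kappa i_0,\kappa j_0)$, which satisfies $i+j=\kappa(i_0+j_0)\ge \kappa$, this gives
\[
\bigl(p_{\jury}(A)\bigr)_{\kappa i_0,\kappa j_0}
=\kappa!\sum_{S\in\mathcal{P}_{\kappa}^{*}(\kappa i_0,\kappa j_0)}\frac{1}{\prod_{(m,n)\in\mathcal{K}_{M,N}^{*}}c_S(m,n)!}\prod_{(p,q)\in S}a_{p,q}.
\]
The multiset $S_0$ consisting of $\kappa$ copies of $(i_0,j_0)$ lies in $\mathcal{P}_{\kappa}^{*}(\kappa i_0,\kappa j_0)$ and contributes the strictly positive term $a_{i_0,j_0}^{\kappa}$. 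Every other term in the sum is nonnegative because $A$ is entrywise nonnegative. Therefore $\bigl(p_{\jury}(A)\bigr)_{\kappa i_0,\kappa j_0}\ge a_{i_0,j_0}^{\kappa}>0$, and in particular $p_{\jury}(A)\neq 0$.

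The only genuine obstacle is a bookkeeping one: verifying that Theorem~\ref{T:poly-on-mat}$(a)$, stated for $A\in\C^{M\times N}$, applies to the semi-infinite $A\in\C^{\N\times\N}_{00}$. This is resolved by the embedding argument above, using that $\C^{M\times N}$ sits naturally inside $\C^{\N\times\N}_{00}$ in a way that respects $\jury$, $+$, and the identity element $\I_{\jury}$, so both the Jury powers and the formula for $p_{\jury}(A)$ on a chosen finite truncation agree with their values in the ambient direct limit. No further subtlety arises since the argument relies only on a single target entry and only on nonnegativity of the remaining terms, not on any cancellation or convergence.
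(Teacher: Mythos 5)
Your argument is correct and is essentially the paper's proof: both pick $(i_0,j_0)\neq(0,0)$ with $a_{i_0,j_0}>0$ and show the entry of $p_{\jury}(A)$ at $(\kappa i_0,\kappa j_0)$ is at least $a_{i_0,j_0}^{\kappa}>0$, the paper via the $\kappa$-fold power formula applied to $A-a_{00}\I_{\jury}$ (Lemma~\ref{k-power-entries:thm}(2)) and you via the equivalent Theorem~\ref{T:poly-on-mat}$(a)$, with the same "all other terms are nonnegative" observation. Your truncation bookkeeping is sound because a convolution entry at $(i,j)$ depends only on entries at indices dominated by $(i,j)$, which is exactly why the paper can invoke the "corresponding versions" of its finite-dimensional results for $\C_{00}^{\N\times\N}$.
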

\begin{proof}
Suppose $(i,j)\in \mathcal{K}_{\infty,\infty}^*$ such that $a_{ij}>0$. Then for $(\kappa i,\kappa j)\in \mathcal{K}_{\infty,\infty}^*$, using the corresponding versions of \eqref{E:poly-act-prop} and Lemma~\ref{k-power-entries:thm}(2) for $\C_{00}^{\N\times \N}$, we have
\[
\big{(}(A-a_{00}\I_{\jury})^{\jury\kappa}\big{)}_{(\kappa i,\kappa j)} \geq a_{ij}^{\kappa} >0.
\]
This shows that $p_{\jury}(A)$ (computed according to the corresponding version of Theorem~\ref{T:poly-on-mat}$(a)$ for $\C_{00}^{\N\times \N}$) is nonzero, and completes the proof.
\end{proof}

In addition to the preceding observations, an even stronger phenomenon occurs. Unlike the rings induced by the standard and entrywise products on $\C_{00}^{\N\times\N}$, we have the following situation: for well-chosen $A \in \C_{00}^{\N\times \N}$ the space,
\[
\mathrm{span}_{\C}\{A\}\subseteq (\C_{00}^{\N\times \N},\jury,+)\quad \mbox{need \textit{not} be finite dimensional.}
\]
This leads to the following result, which further supports our claim:
\begin{prop}
There exists $A\in \C^{\N\times \N}_{00}$ such that $p_{\jury}(A)\neq 0$ for all nonzero $p\in \C[z]$.
\end{prop}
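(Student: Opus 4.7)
The plan is to exhibit an explicit $A\in \C^{\N\times \N}_{00}$ whose Jury powers $\I_\jury,\,A,\,A^{\jury 2},\,A^{\jury 3},\ldots$ have pairwise disjoint supports, and are in particular linearly independent over $\C$. Such an $A$ immediately satisfies the conclusion, because $p_\jury(A)=0$ for a nonzero $p(z)=\sum_{k=0}^n c_k z^k\in \C[z]$ would force $c_k=0$ for every $k$.

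The candidate I would take is the matrix $A\in\C^{\N\times\N}_{00}$ whose only nonzero entries are $a_{1,0}=a_{0,1}=1$. Invoking the $\C^{\N\times\N}_{00}$-analogue of Lemma~\ref{k-power-entries:thm}(2), every partition $S\in \mathcal{P}_k(i,j)$ contributing a nonzero summand to $(A^{\jury k})_{ij}$ may use only the indices $(1,0)$ and $(0,1)$. The sole such partition uses $i$ copies of $(1,0)$ and $j$ copies of $(0,1)$, which constrains $k=i+j$. Therefore
\[
(A^{\jury k})_{ij}=\begin{cases}\binom{k}{i}&\text{if }i+j=k,\\ 0&\text{otherwise.}\end{cases}
\]
In particular $A^{\jury k}$ is supported exactly on the $k$-th antidiagonal $\{(i,j):i+j=k\}$ and is nonzero there for every $k\geq 1$.

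Since $\I_\jury$ is supported at $(0,0)$ and the antidiagonals $\{(i,j):i+j=k\}$ for distinct $k\geq 1$ are disjoint from each other and from $\{(0,0)\}$, the entries of $c_0 \I_\jury+\sum_{k=1}^n c_k A^{\jury k}$ on a given antidiagonal are determined by a single coefficient $c_k$. Hence if $p\in\C[z]$ is nonzero and $p(z)=\sum_{k=0}^n c_k z^k$, then $p_\jury(A)$ is nonzero on the support of any $A^{\jury k}$ with $c_k\neq 0$.

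I do not foresee a serious obstacle: the key insight is the simple choice of $A$ that makes the supports of successive Jury powers disjoint, which is possible precisely because $\C^{\N\times\N}_{00}$ imposes no finite-dimensional truncation on the convolution (unlike the $\C^{M\times N}$ setting, where all supports are collapsed into the $M\times N$ box). The only verification needed is that the power formula of Lemma~\ref{k-power-entries:thm}(2) transports verbatim to $\C^{\N\times\N}_{00}$, which is immediate because the convolution sums involved are finite.
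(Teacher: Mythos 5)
Your proposal is correct and follows essentially the same strategy as the paper: exhibit an explicit finitely supported matrix whose Jury powers have pairwise disjoint supports (hence are linearly independent), forcing every coefficient of an annihilating polynomial to vanish. The paper takes $A=\diag(0,1,0,0,\dots)$, whose $k$-th Jury power is supported at the single index $(k,k)$, while you take $a_{1,0}=a_{0,1}=1$ with powers supported on the $k$-th antidiagonal; both verifications via the $\C^{\N\times\N}_{00}$-analogue of Lemma~\ref{k-power-entries:thm}(2) are equally immediate.
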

\begin{proof}
Take $A=\diag(0,1,0,0,\dots)\in \C_{00}^{\N\times \N}$. Then, using the corresponding version of Lemma~\ref{k-power-entries:thm}(2), for all integers $n\geq 1$:
\[
A^{\jury n} = \diag(0,\dots,0,1,0,\dots)\in \C_{00}^{\N\times \N},
\]
with $1$ at the $(n+1)$-th position. Now, if $p(z)=c_{0}+c_1z+\dots+c_nz^{n}\in \C[z]$ is nonzero, then
\[
p_{\jury}(A)=\mathrm{diag}(c_0,c_1,\dots,c_n,0,0,\dots) \in \C_{00}^{\N\times \N}.
\]
Since, $p$ is nonzero, we complete the proof.
\end{proof}

Therefore, one sees that in contrast with the standard and entrywise products, a Cayley–Hamilton theory over $\C_{00}^{\N\times \N}$ fails to hold for convolution. This observation clarifies the opening remark of the subsection -- showing that the answer to Question~\ref{question}(1) is ``no'' for $\C_{00}^{\N\times \N}$ -- and may also be regarded as a continuation of Subsection~\ref{Sub:jury-v-schur}. This concludes our discussion on the Cayley--Hamilton theory, and we move to the proofs of the positivity preservers.

\section{Positivity preservers: proofs}\label{S:proof-pos-preservers}

We adopt a systematic approach. We begin with establishing that some of the divided differences are nonnegative functions. We then specialize to various cases and derive nonnegativity, monotonicity, and continuity of the scalar functions. Combining continuity with the nonnegativity of divided differences, via a classical result of Boas and Widder \cite{boas1940functions} for continuous functions with nonnegative divided differences, we obtain that derivatives of the scalar function exist and are nonnegativity. Then via an application of the Bernstein theorem on absolutely monotonic functions\cite{bernstein1929sur}, we obtain real analyticity. This yields absolute monotonicity for our Schoenberg-type theorem.

\begin{defn}\label{defn:mid-con}
For an interval $I\subseteq \R$, we call $f:I\to \R$ nonnegative, nondecreasing, and convex, provided for all $y \geq x\in I$, we respectively have
\[
f(x)\geq 0,\quad f(y)\geq f(x), \quad (1-\lambda)f(x)+\lambda f(y)\geq f\left ((1-\lambda)x+\lambda y \right),
\]
for all $\lambda \in [0,1]$. Moreover, if the final inequality holds (only) for $\lambda=1/2$, then $f$ is called mid-convex.
\end{defn}

\begin{lemma}[Nonnegativity of difference operators]\label{L:diff-operators}
Suppose $I=(0,\rho)$ is an interval for some $0< \rho \leq \infty$. Let $\Omega \subseteq \C$ such that $\Omega\cap (0,\infty) = I$, and let $f:\Omega\to \C$ be a map. Fix an integer $N\geq 2$, and suppose $f_{\jury}(A)_{h}\in \Pn_N$  for all $A=(a_{ij})\in \Pn_N(\Omega)$, for all $h>0$ such that $(a_{00},a_{00}+2(N-1)h)\subseteq I$. Then the forward difference operators are nonnegative:
\[
(\Delta_{h}^\ell f)(x)\geq 0,\quad \mbox{for all } h>0,
\]
such that $x,x+h,\dots,x+2(N-1)h\in I$, for all $\ell = 0,1,\dots,N-1$.
\end{lemma}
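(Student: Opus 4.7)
The plan is to exhibit a one-parameter family of explicit PSD matrices whose image under $f_{\jury}(-)_h$ is diagonal in the ordinary sense, so that the positive semidefiniteness of $f_{\jury}(A)_h$ collapses to the nonnegativity of its diagonal entries, and to arrange matters so that the $k$-th diagonal entry isolates $(D_h^{k}f)(x)$ in an $\epsilon\downarrow 0$ limit. Fix $x\in I$ and $h>0$ with $x,x+h,\dots,x+2(N-1)h\in I$, pick any $d_1\in I$, and for $\epsilon>0$ small enough that $\epsilon\in I$ set
\[
A_\epsilon:=\diag(x,\,d_1,\,\epsilon,\,\epsilon,\,\dots,\,\epsilon)\in\R^{N\times N}.
\]
This matrix is diagonal with positive entries, hence PSD with entries in $I\subseteq\Omega$; the condition on $h$ ensures $f_{\jury}(A_\epsilon)_h$ is well-defined, and by hypothesis it lies in $\Pn_N$.

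The key observation is that $f_{\jury}(A_\epsilon)_h$ is itself diagonal. Indeed, Definition~\ref{jury-transform-2} writes $(f_{\jury}(A_\epsilon)_h)_{ij}$ as a weighted sum over multisets $S\in\mathcal{P}_{\ell}^*(i,j)$ of products $\prod_{(p,q)\in S}a_{p,q}$; since the entries of $A_\epsilon$ vanish off the diagonal, only multisets consisting entirely of pairs $(p,p)$ with $p\geq 1$ contribute, and then $\sum_{(p,q)\in S}(p,q)=(i,j)$ forces $i=j$. Hence the membership of $f_{\jury}(A_\epsilon)_h$ in $\Pn_N$ is equivalent to each of its diagonal entries being real and nonnegative. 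Parametrizing the surviving multisets at index $(k,k)$ by multiplicities $c_j:=c_S(j,j)$ for $j\geq 1$ (constrained by $\sum_{j\geq 1} j c_j = k$), one obtains
\[
\big(f_{\jury}(A_\epsilon)_h\big)_{kk}=\sum_{\substack{c_1,c_2,\dots\geq 0 \\ \sum_{j\geq 1} j c_j = k}}\frac{d_1^{\,c_1}\,\epsilon^{c_2+c_3+\cdots}}{\prod_{j\geq 1}c_j!}\,(D_h^{c_1+c_2+\cdots}f)(x).
\]
The unique summand carrying no factor of $\epsilon$ is the one with $c_1=k$ and $c_j=0$ for $j\geq 2$, which contributes $\tfrac{d_1^{k}}{k!}(D_h^{k}f)(x)$; every other summand is $O(\epsilon)$ as $\epsilon\downarrow 0$.

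Letting $\epsilon\downarrow 0$ in $(f_{\jury}(A_\epsilon)_h)_{kk}\geq 0$ then yields $\tfrac{d_1^{k}}{k!}(D_h^{k}f)(x)\geq 0$, and since $d_1>0$ this forces $(D_h^{k}f)(x)\geq 0$ and hence $(\Delta_h^{k}f)(x)=h^{k}(D_h^{k}f)(x)\geq 0$ for each $k\in\{1,\dots,N-1\}$; the case $k=0$ is immediate from $(f_{\jury}(A_\epsilon)_h)_{00}=f(x)\geq 0$. The main obstacle is purely combinatorial: identifying $\{(1,1)^{k}\}$ as the sole multiset in $\mathcal{P}_{k}^*(k,k)$ whose diagonal contribution is independent of $\epsilon$, and checking that its multinomial weight collapses to exactly $1/k!$; this is essentially Theorem~\ref{T:poly-on-mat}$(a)$ specialized to standard-diagonal input.
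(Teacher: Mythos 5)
Your strategy — feed a one-parameter family of near-diagonal test matrices into $f_{\jury}(-)_h$, read off the $(k,k)$ diagonal entry, and let $\epsilon\downarrow 0$ to isolate $(D_h^k f)(x)$ — is exactly the paper's strategy, and the combinatorial bookkeeping is sound: the unique multiset in $\mathcal{P}_\ell^*(k,k)$ supported on $\{(1,1)\}$ is $\{(1,1)^{(k)}\}$, with $\ell=k$ and weight $1/k!$. The issue is the choice of test matrix.

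You take $A_\epsilon=\diag(x,d_1,\epsilon,\dots,\epsilon)$ and assert it has ``entries in $I\subseteq\Omega$.'' That is not true: its off-diagonal entries are $0$, and $0\notin I=(0,\rho)$. Since the only assumption on $\Omega$ is $\Omega\cap(0,\infty)=I$, there is no guarantee $0\in\Omega$, so $A_\epsilon$ need not lie in $\Pn_N(\Omega)$ and the hypothesis cannot be applied. The paper sidesteps this by using
$A(\epsilon,x):=\diag(x-\epsilon,x-\epsilon,0,\dots,0)+\epsilon\mathbf{1}_{N\times N}$, which is PSD (sum of two PSD matrices) with every entry equal to $x$ or $\epsilon$, hence strictly positive and in $I$ for small $\epsilon>0$. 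The price is that this matrix is \emph{not} diagonal, so your ``key observation'' that $f_{\jury}(A_\epsilon)_h$ is itself diagonal is lost. But you never actually needed that observation: you only invoke the trivial fact that a PSD matrix has nonnegative diagonal entries, and the $\epsilon\mathbf{1}$ perturbation contributes only $o(1)$ terms at the $(k,k)$ position (every surviving multiset not equal to $\{(1,1)^{(k)}\}$ must use at least one $\epsilon$-entry). So the fix is: replace your diagonal $A_\epsilon$ by the paper's $A(\epsilon,x)$ (optionally with $a_{11}=d_1$ instead of $x$ — that freedom is harmless), drop the claim that the image is diagonal, and the rest of your limit argument goes through verbatim.
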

\begin{proof}
Fix $x\in I$, and suppose $h>0$ is such that $x,x+h,\dots,x+2(N-1)h\in I$. Consider the following $N\times N$ matrix for all small $\epsilon>0$:
\[
A=A(\epsilon,x):=\diag(x-\epsilon,x-\epsilon,0,\dots,0)+\epsilon \mathbf{1}_{N\times N},
\]
where $\mathbf{1}_{N\times N}$ is the matrix with all entries 1. Clearly $A\in \Pn_N(I)$ for small $\epsilon>0$. Therefore $f_{\jury}(A)_{h}$ is positive semidefinite. Therefore, in particular, the diagonal entries of it are nonnegative:
\[
\mbox{for } k=1,\dots,N-1, \quad \mbox{we have } (f_{\jury}(A)_{h})_{k,k}= \frac{1}{k!}~x^{k}~(D_h^{k}f)(x)+g(\epsilon)\geq 0,
\]
for all small $\epsilon>0$. It is not difficult to see that $g(\epsilon)=o(1)$. Therefore, taking $\epsilon\to 0$ in the above relation, we obtain $(D_h^{k}f)(x)\geq 0$ for $k=1,\dots,N-1$. This yields the nonnegativity of the corresponding difference operators. For the $k=0$ case, observe that for the aforementioned $A:=A(\epsilon,x)$, we have $
(\Delta_{h}^{0}f)(x)=(D_h^0f)(x)=f(x)=(f_{\jury}(A)_h)_{00}\geq 0$, and thus we conclude the proof.
\end{proof}

\subsection{Proof of Theorem~\ref{T:polya-szego-for-jury}}

\begin{proof}[Proof of Theorem~\ref{T:polya-szego-for-jury}]
Pick any $N\geq 1$ and a positive semidefinite $A\in I^{N\times N}$. Suppose
\[
p_{n}(x):=\sum_{k=0}^{n}c_kx^k \quad \mbox{for all}\quad n\geq 0.
\]
Then $(p_{n})_{\jury}(A)$ is positive semidefinite. Since the positive semidefinite matrices are closed under taking entrywise limits, the matrix formed by entries $\lim_{n\to \infty} \big{(}(p_{n})_{\jury}(A)\big{)}_{ij}$ is positive semidefinite, if the limits exist. In the domain of convergence of $f$, we have pointwise convergence, which yields $\lim_{n\to\infty}p_{n}(a_{00})=f(a_{00})$, and for all $(i,j)\in \mathcal{K}_{N,N}^*$,
\begin{align*}
&\lim_{n\to \infty}\big{(}(p_{n})_{\jury}(A)\big{)}_{ij}\\
= &\sum_{\ell=1}^{i+j} \lim_{n\to \infty} p_{n}^{(\ell)}(a_{00})
\sum_{S\in\mathcal{P}_{\ell}^*(i,j)}
\frac{1}{\prod_{(m,n)\in \mathcal{K}_{M,N}^*} c_S(m,n)!}
\prod_{(p,q)\in S} a_{p,q} \\
=&\sum_{\ell=1}^{i+j} f^{(\ell)}(a_{00})
\sum_{S\in\mathcal{P}_{\ell}^*(i,j)}
\frac{1}{\prod_{(m,n)\in \mathcal{K}_{M,N}^*} c_S(m,n)!}
\prod_{(p,q)\in S} a_{p,q}=\big{(}f_{\jury}(A)\big{)}_{ij}.
\end{align*}
Therefore, we have that the following (which also gives the meaning of $f_{\jury}(A)$ in terms of the power series of $f(x)$):
\[
\sum_{k=0}^{\infty}c_k A^{\jury k}:=\lim_{n\to\infty}\sum_{k=0}^{n}c_k A^{\jury k}= \lim_{n\to \infty }(p_{n})_{\jury}(A)=f_{\jury}(A)
\]
exists and is positive semidefinite, completing the proof.
\end{proof}

\subsection{Proof of Theorem~\ref{T:Rudin-for-jury}}

\begin{proof}[Proof of Theorem~\ref{T:Rudin-for-jury}]
We begin with the proof of monotonicity.\medskip

\noindent\textit{(Monotonicity and nonnegativity).} To convey the idea of the proof in a simpler manner, let us first demonstrate it when $N=2$ and $0<\rho<\infty$. Fix $x\in I$ and suppose $h>0$ such that $x,x+h,x+2h\in I$. Under this assumption, using Lemma~\ref{L:diff-operators}, we have that the first forward difference operator is nonnegative:
\begin{align}\label{E0:L:N=2}
(\Delta^{1}_{h}f)(x)\geq 0 \implies f(x+h) \geq f(x),
\end{align}
if $h>0$, and $x,x+h,x+2h \in I$. In other words, for $x<y\in I$, and step size $h=(y-x)/2$, we have,
\begin{align}\label{E1:L:N=2}
f(x)\leq f\left (\frac{x+y}{2}\right) \quad \mbox{for all } x,y\in I.
\end{align}
To show that $f(x)\leq f(y)$ for arbitrary $x,y\in I$, we build on \eqref{E0:L:N=2} and \eqref{E1:L:N=2}. We begin with constructing a sequence converging to $y$. Given $x<y\in I$, we inductively define:
\[
y_0:=x,\quad y_{1} = \frac{x+y}{2}, \quad y_{2} = \frac{y_1+y}{2},\quad\dots,\quad y_n=\frac{y_{n-1}+y}{2},
\]
for all $n\geq 1$. Equation~\eqref{E1:L:N=2} shows that for $x<y\in I$,
\[
f(x)\leq f(\mbox{the mid-point of }x,y).
\]
Moreover, we have that each $y_n$ is the mid-point of $y_{n-1}<y\in I$. Combining these, we get
\begin{align}\label{E2:L:N=2}
f(x)\leq f(y_1) \leq f(y_2)\leq \dots \leq f(y_{n-1})\leq f(y_n) \quad \mbox{for all}\quad n\geq 1.
\end{align}
Moreover, we have $y_{n}\to y$ as $n\to\infty$. Therefore,
\(y-y_{n_0} < (\rho - y)/2\) for a large integer \(n_0\geq 2.\) Using this $n_0$, define $z=y+y-y_{n_0} = 2y-y_{n_0}$. Then we have $z\in I$, and $y$ as the mid-point of $y_{n_0}$ and $z$, where $y_{n_0}<z$. Therefore by \eqref{E1:L:N=2} for $x'=y_{n_0}$ and $y'=z$, we obtain
\[
f(y_{n_0})\leq f\left (\frac{y_{n_0}+z}{2}\right) = f(y).
\]
This and \eqref{E2:L:N=2} implies $f(x)\leq f(y)$ for $x<y\in I$. This completes the presentation for $N=2$.\medskip

Let us now look at the general $N\geq 2$ case with $0<\rho<\infty$. Here we have a similar approach, but with a bit of bookkeeping. To begin with, since $N\geq 2$, Lemma~\ref{L:diff-operators} yields that the first difference operator is nonnegative:
\begin{align*}
(\Delta^{1}_{h}f)(x)\geq 0 \quad \mbox{i.e.,}\quad f(x+h) \geq f(x),
\end{align*}
whenever $h>0$, and $x,x+h,\dots x+2(N-1)h \in I$. Therefore, given $x<y\in I$, for step size $h=\frac{y-x}{2(N-1)}$, we obtain that
\begin{align}\label{eq1:L:N=2}
x+h=\frac{(2N-3)x+y}{2(N-1)} \implies f(x)\leq f\left (\frac{(2N-3)x+y}{2(N-1)}\right),
\end{align}
for all $x<y\in I$. We now construct a sequence converging to $y$. Inductively define:
\[
y_0:=x, \quad  y_1:=\frac{(2N-3)y_0+y}{2(N-1)}, \quad \dots, \quad y_n:=\frac{(2N-3)y_{n-1}+y}{2(N-1)},
\]
for all $n\geq 1$. Therefore, equation~\eqref{eq1:L:N=2} shows that
\begin{align}\label{eq2:L:N=2}
f(x) \leq f(y_1)\leq  \cdots\leq f(y_{n-1})\leq f(y_n) \quad \forall~ n\geq 1.
\end{align}
Moreover, note that $y_{n}\to y$ as $n\to\infty$. Therefore as $\rho$ is finite,
\[
y-y_{n_0} < \frac{\rho-y}{2N-2} \quad \mbox{for a large integer}\quad n_0\geq 2.
\]
Now, using this $n_0$, consider $z\in \R$ defined by:
\[
z:=y+(2N-3)(y-y_{n_0}) \implies z < y+ \frac{2N-3}{2N-2}(\rho - y) < y + \rho - y = \rho.
\]
Therefore $z\in I$, and moreover $y=\frac{(2N-3)y_{n_0}+z}{2(N-1)}$ is the ``first step'' for step size $h=\frac{z-y_{n_0}}{2(N-1)}$ between $y_{n_0}<z$. Therefore by \eqref{eq1:L:N=2}, $f(y_{n_0})\leq f(y)$. This combined with \eqref{eq2:L:N=2} implies $f(x)\leq f(y)$ for all $x<y\in I$, showing the required monotonicity.

In the case when $\rho=\infty$, i.e., $I=(0,\infty)$, Lemma~\ref{L:diff-operators} shows that $(\Delta_{h}^{1}f)(x)\geq 0$ for all $x,h>0$. Therefore, for any $x<y\in I$, with step size $h={y-x}$, we obtain that $f(x)\leq f(y)$.

The nonnegativity of $f$ follows from Lemma~\ref{L:diff-operators} as $f(x)=(\Delta_h^{0}f)(x)\geq 0$ for all $x\in I$.\medskip

\noindent \textit{(Continuity).} We partition this proof into two cases. If $N\geq 3$, then the proof is rather simpler, and we begin with this case first (and treat the $N=2$ case after).\medskip

Since $N\geq 3$, for any $x\in I$ with $h>0$ such that $x,x+h,\dots, x+2(N-1)h\in I$, using Lemma~\ref{L:diff-operators}, we have that the second difference operator is nonnegative, and therefore:
\begin{align}\label{E0:T:cont}
(\Delta^{2}_{h}f)(x)\geq 0 \implies \frac{f(x)+f(x+2h)}{2}\geq  f(x+h),
\end{align}
whenever $h>0$ such that $x,x+h,\dots, x+2(N-1)h\in I$.

On the other side, from Theorem~\ref{T:Rudin-for-jury} on monotonicity, we have that $f$ is nondecreasing over $I$. This means that $f:I\to\R$ can have at most jump discontinuities, and that the right- and left-hand limits exist for all $x\in I$ (since $I$ is open). Define the right- and left-hand limits:
\[
f^+,f^-:I\to \R \quad \mbox{defined by}\quad f^{\pm}(x):=\lim_{y \to x^{\pm}} f(y) \quad \mbox{for all } x\in I.
\]
By monotonicity: $f^-(x)\leq f(x)\leq f^+(x)$ for all $x\in I$. Moreover, by \eqref{E0:T:cont} for small $h>0$:
\begin{align*}
&\lim_{h\to0^+}\frac{f(x)+f(x+2h)}{2}\geq  \lim_{h\to0^+} f(x+h) \implies \frac{f(x)+f^+(x)}{2}\geq  f^+(x).
\end{align*}
This implies $ f(x)\geq f^+(x)$, and therefore, $f(x)=f^+(x)$. Thus, $f$ can have jumps only from the left side.

Again, for any $x\in I$ and small $h>0$, we have the second difference operator nonnegative:
\begin{align*}
(\Delta^{2}_{h}f)(x-h)\geq 0 &\implies \frac{f(x-h)+f(x+h)}{2}\geq  f(x).
\end{align*}
Now take limit, and use $f(x)=f^+(x)$, to obtain:
\begin{align*}
\lim_{h\to 0^+} \frac{f(x-h)+f(x+h)}{2}\geq f(x) &\implies  \frac{f^-(x)+f^+(x)}{2}\geq  f(x).
\end{align*}
This implies $f^-(x)\geq f(x)$, and therefore $f(x) = f^-(x)$, showing that $f$ is continuous.\medskip

So, all that remains is to show the $N=2$ case. From the previous $N\geq 3$ case we import: $f$ has at most jump discontinuities, and $f^{\pm}$ are well defined over $I$ with $f^-(x)\leq f(x)\leq f^+(x)$. Therefore, all we need is to prove $f^-(x)\geq f(x)\geq f^+(x)$ for all $x\in I$.\medskip

Suppose $A=\begin{pmatrix} a & b \\ \overline{b} & c
\end{pmatrix} \in \Pn_2(\Omega)$. Then for $h>0$ such that $a,a+h,a+2h\in I$, we have
\[
f_{\jury}(A)_h=
\begin{pmatrix}
f(a) & b (D^{1}_hf)(a) \\
\overline{b} (D^{1}_hf)(a) & c(D^{1}_hf)(a) + |b|^2 (D^{2}_hf)(a)
\end{pmatrix} \in \Pn_2.
\]
Expanding the determinant of the above, we get
\begin{align*}
&\det f_{\jury}(A)_{h} = cf(a)(D^{1}_hf)(a) + |b|^2 f(a) (D^{2}_hf)(a) - |b|^2 \big{(}(D^{1}_hf)(a)\big{)}^2 \\
&= cf(a)\frac{f(a+h)-f(a)}{h}
+ |b|^2 f(a) \frac{f(a)-2f(a+h)+f(a+2h)}{h^2} \\
&\qquad - |b|^2 \left(\frac{f(a+h)-f(a)}{h}\right)^2.
\end{align*}
This determinant is nonnegative for all small $h>0$. Therefore, multiplying $h^2$ on both sides, and taking limit we get,
\begin{align*}
&\lim_{h\to 0^+}h^2\det f_{\jury}(A)_{h}\\
&= |b|^2 f(a) \Big{[}f(a)-2f^+(a)+f^+(a)\Big{]} - |b|^2 \Big{[}f^+(a)-f(a)\Big{]}^2 \geq 0.
\end{align*}
Assuming $b\neq 0$, this simplifies to,
\begin{align*}
0\leq &  f(a) \Big{[}f(a)-2f^+(a)+f^+(a)\Big{]} - \Big{[}f^+(a)-f(a)\Big{]}^2 \\
=& f(a) \Big{[}f(a)-f^+(a)\Big{]} - \Big{[}f(a)-f^+(a)\Big{]}^2 \\
=& \Big{[}f(a)-f^+(a)\Big{]} \Big(f(a)  - f(a)+f^+(a)\Big) = \Big{[}f(a)-f^+(a)\Big{]} f^+(a).
\end{align*}
If $f^+(a)\neq 0$ then $f(a)\geq f^+(a)$, which implies that $f(a)= f^+(a)$. On the other hand, if $f^+(a)=0$ then
\[
0\leq f(a)\leq f^{+}(a)=0 \implies f(a)=f^+(a).
\]
Therefore, as one can always construct $A=(a_{ij})\in \Pn_2(\Omega)$ with $a_{00}=a$ and $a_{0,1}=b\neq 0$, for arbitrary $a\in I$, we have $f(a)=f^+(a)$ for all $a\in I$.

Next, suppose $A=\begin{pmatrix} a & b \\ \overline{b} & c
\end{pmatrix} \in \Pn_2(\Omega)$ is such that
\[
B(h):=\begin{pmatrix} a-h & b \\ \overline{b} & c-h
\end{pmatrix}\in \Pn_2(\Omega) \quad \mbox{for all small } h>0.
\]
Then for these small $h>0$, we have
\begin{align*}
&f_{\jury}(B(h))_h\\
&=
\begin{pmatrix}
f(a-h) & b (D^{1}_hf)(a-h) \\
\overline{b} (D^{1}_hf)(a-h) & (c-h)(D^{1}_hf)(a-h) + |b|^2 (D^{2}_hf)(a-h)
\end{pmatrix}
\end{align*}
is positive semidefinite. Expanding its determinant we get,
\begin{align*}
&\det f_{\jury}(B(h))_{h}\\
&= (c-h)f(a-h)(D^{1}_hf)(a-h) + |b|^2 f(a-h) (D^{2}_hf)(a-h)\\
&\qquad - |b|^2 \big{(}(D^{1}_hf)(a-h)\big{)}^2 \\
&= cf(a-h)\frac{f(a)-f(a-h)}{h} \\
& \qquad + |b|^2 f(a-h) \frac{f(a-h)-2f(a)+f(a+h)}{h^2} \\
& \qquad \qquad - |b|^2 \left(\frac{f(a)-f(a-h)}{h}\right)^2.
\end{align*}
The above determinant is nonnegative for all small $h>0$. Therefore, multiplying $h^2$ on both sides, and taking limit, we get
\begin{align*}
&\lim_{h\to 0^+}h^2\det f_{\jury}(B(h))_{h}\\
&= |b|^2 f^-(a) \Big{[}f^-(a)-2f(a)+f^+(a)\Big{]} - |b|^2 \Big{[}f(a)-f^-(a)\Big{]}^2 \geq 0.
\end{align*}
Assuming $b\neq 0$, the above, combined with $f(a)=f^+(a)$, implies
\begin{align*}
& f^-(a) \Big{[}f^-(a)-f(a)\Big{]} - \Big{[}f^{-}(a)-f(a)\Big{]}^2 \\
&= \Big{[}f^-(a)-f(a)\Big{]} \Big( f^-(a)  - f^{-}(a)+f(a)\Big{)} = \Big{[}f^-(a)-f(a)\Big{]} f(a) \geq 0.
\end{align*}
Suppose $f^{-}(a) \geq 0$ for all $a\in I$. Then, $f^-(a) > 0$ implies $f(a)>0$, and therefore $f^-(a)\geq f(a)$. On the other hand, if $f(a)=0$, then $0\leq f^{-}(a)\leq f(a)=0$, which implies $f^{-}(a)=f(a)$.

So, all that remains to show is that $f^-$ is nonnegative on $I$, and matrices $A=(a_{ij})\in \Pn_2(\Omega)$ exists for any arbitrary $a_{00}=a\in I$ with nonzero $a_{01}=b$, such that $B(h)\in \Pn_2(\Omega)$. We show both claims in one step. Consider
\[
A=\begin{pmatrix} a & \sqrt{a/2} \\ \sqrt{a/2} & a
\end{pmatrix}, \quad \mbox{and so}\quad B(h)=\begin{pmatrix} a-h & \sqrt{a/2} \\ \sqrt{a/2} & a-h
\end{pmatrix}\in \Pn_2(\Omega)
\]
for all small $h>0$. Now for the first problem, look at the first entry of $f_{\jury}(B(h))_h$: it is $f(a-h)\geq 0$. Now since entrywise limit of positive matrices is positive, we have $\lim_{h\to 0^+}f_{\jury}(B(h))_h \in \Pn_2$. Therefore the first entry of the limit, given by $\lim_{h\to 0^+}f(a-h) = f^-(a)$ is nonnegative. Since $a\in I$ is arbitrary, we are done.\medskip

\noindent\textit{(Convexity).} Since $\rho=\infty$ and the arbitrary $N\geq 3$, we have that $\Delta^2_h(x)\geq 0$ for all $x,h>0$. Therefore, we take $h={(y-x)}/2$ for any $y>x\in I$, and use that whenever $x,h>0$, we have
\begin{align*}
\Delta^{2}_{h}(x)\geq 0 &\implies \frac{f(x)+f(x+2h)}{2}\geq  f(x+h)\\
&\implies \frac{f(x)+f(y)}{2}\geq  f\left(\frac{x+y}{2}\right).
\end{align*}
This yields mid-convexity. Therefore, using {\cite[Chapter~6, Lemma~6.3]{khare2022matrix}}, we get that $f$ is rationally convex: convex for all rational $\lambda \in [0,1]$. Thus, by continuity (proved above) we get convexity.
\end{proof}

\begin{remark}[Comparing with entrywise preservers, and more]

A couple of notes are in order.
\begin{enumerate}
    \item The proofs for monotonicity and continuity of functions $f:I\to \R$ acting as entrywise positivity preservers $f[-]:\Pn_N(I)\to \Pn_N$ proceed along a different path. In the entrywise setting, monotonicity follows almost immediately, while continuity is derived via \textit{multiplicative} mid-convexity of $f$ combined with the monotonicity (via {\cite[Chapter~6, Theorem~6.2]{khare2022matrix}}). In contrast, for the transforms $f_{\jury}(-)_{h}$, the argument for monotonicity is naturally more involved, and continuity is proved directly without appealing to the stronger notion of mid-convexity. However, we also get (mid-)convexity for certain unbounded domains.

    \item As noted earlier in Remark~\ref{rem:jury-transform-2}, we may have used the positivity of $f_{\jury}(-)_h$ for all possible $h>0$ in our proof above: first in the proof of monotonicity $h>0$ can be large, and then in the proof of continuity $h>0$ can be small.
\end{enumerate}
\end{remark}

\subsection{Proof of Theorem~\ref{T:horn-for-jury}}

\begin{proof}[Proof of Theorem~\ref{T:horn-for-jury}]
We begin with the proof of the first assertion.
\begin{enumerate}[$(a)$]
    \item We know from Theorem~\ref{T:Rudin-for-jury} that $f$ is continuous. Furthermore, we know from Lemma~\ref{L:diff-operators} that all forward differences $(\Delta_{h}^{\ell}f)(x)\geq 0$ for all $x,h>0$, for all $\ell=0,1,\dots,N-1$. Therefore, using the theorem of Boas and Widder on continuous functions with positive differences (see \cite{boas1940functions} or \cite[Theorem 13.8]{khare2022matrix}), we conclude that $f\in C^{N-3}(I)$. Moreover, we get that $f^{(\ell)}(x)\geq 0$ for all $x\in I$ and all $\ell = 0,1,\dots,N-3$. Using the same result, we get the rest of the characterization.
    \item Suppose $x\in I=(0,\rho)$ for $0<\rho\leq \infty$. Then for small $\epsilon>0,$
\[
A=A(\epsilon,x):=\diag(x,x,0,\dots,0)+\epsilon \mathbf{1}_{N\times N}\in \Pn_{N}(I),
\]
where $\mathbf{1}_{N\times N}$ is the matrix with all entries $1$. Then $f_{\jury}(A)_{00}=f(x+\epsilon)\geq 0$; therefore by continuity $f(x)\geq 0.$ We also have, for $k=1,\dots,N-1,$ that
\[
f_{\jury}(A)_{k,k}=\frac{1}{k!}(x+\epsilon)^{k}~f^{(k)}(x+\epsilon)+g(\epsilon)\geq 0 \quad \mbox{for small }\epsilon>0,
\]
where $g(\epsilon)=o(1)$. Taking $\epsilon\to 0$, we obtain $f^{(k)}(x)\geq 0$ for $k\in 1,\dots,N-1.$\qedhere
\end{enumerate}
\end{proof}

\subsection{Proof of Theorem~\ref{T:schoeberg-for-jury}}

\begin{proof}[Proof of Theorem~\ref{T:schoeberg-for-jury}] $(a)\implies(b)$ for $\rho=\infty:$ Under the assumption in Theorem~\ref{T:schoeberg-for-jury}$(a)$, using Theorem~\ref{T:Rudin-for-jury} we conclude that $f:I\to \R$ is continuous. Moreover, using Lemma~\ref{L:diff-operators}, its forward difference operators of order $\ell$ are nonnegative for all $\ell \geq 2$. Therefore, using the theorem of Boas and Widder on continuous functions with nonnegative forward differences (see \cite{boas1940functions} or \cite[Theorem 13.8]{khare2022matrix}), we conclude that $f\in C^{\ell-2}(I)$ for all $\ell \geq 2$. In particular $f\in C^{\infty}(I)$. Finally, as the set of positive semidefinite matrices are closed, the entrywise limit $f_{\jury}(A) = \lim_{h\to 0^+}f_{\jury}(A)_h$ shows the remaining assertion in $(b)$.

$(b)\implies(c):$ Suppose $g:=f|_{I}$. Then $g_{\jury}(-):\Pn_N(I)\to \Pn_N$ for all $N\geq 1$. Therefore by Theorem~\ref{T:horn-for-jury}, all derivatives of \( g \) satisfy \( g^{(k)}(x) \geq 0 \) for all \( x \in I \). This implies that \( g \) is nonnegative and nondecreasing on \( I \), and thus admits a continuous extension to the origin via the limit \( g(0) := \lim_{x \to 0^+} g(x) \geq 0 \).

By the Bernstein theorem for absolutely monotonic functions (see \cite{bernstein1929sur} or \cite[Theorem~14.3]{khare2022matrix}), it follows that \( g \) coincides with a convergent power series \( \sum_{k=0}^\infty c_k z^k \) on the disc \( D(0, \rho) \) centered at $0$ of radius $\rho$. Restricting this power series representation of \( g \) to \( I \), and since \( g^{(k)}(x) \geq 0 \) on \( I \) for all \( k \), we deduce that \( g^{(k)}(0) \geq 0 \), and hence \( c_k \geq 0 \) for all \( k \geq 0 \), completing the argument.

$(c)\implies(b):$ This follows from Theorem~\ref{T:polya-szego-for-jury}.

$(c)\implies(d):$ Fix an arbitrary $N\geq 1$ and a positive semidefinite $A\in \Omega^{N\times N}$. We have from Theorem~\ref{T:polya-szego-for-jury} that $f_{\jury}(A)=\sum_{k=0}^{\infty}c_k A^{\jury k}$ is positive semidefinite.

Now, if $f$ is a linear function, then $f_{\jury}(A)_{h}=f_{\jury}(A)$, which is positive semidefinite. If $f$ is not linear, then using Theorem~\ref{T:jury} that convolution of positive definite matrices (positive semidefinite and nonsigular) is positive definite, we obtain that $f_{\jury}(A+\delta\I)$ is positive definite, for all $\delta>0$, where $\I$ is the identity matrix for the usual matrix product. We have that the set of positive definite matrices is open in the set of Hermitian matrices, and that $\lim_{h\to 0^+}f_{\jury}(A+\delta\I)_{h}=f_{\jury}(A+\delta\I)$. Therefore we must have for all small $h> 0$ that $f_{\jury}(A+\delta\I)_{h}$ is positive semidefinite. Now take $\delta\to 0^+$ to conclude the implication.
\end{proof}

\begin{example}\label{Example:T-Schoenberg}
It is not difficult to see that for linear absolutely monotonic $f:I\to \R$, each $(D^{\ell}_{h}f)(x)=f^{(\ell)}(x)$. Therefore, for these maps $(c)\implies (a)$ in Theorem~\ref{T:schoeberg-for-jury}.

So, for a counterexample, consider $f(x)\equiv x^2$. Then $(D_h^1f)(x)=2x+h$ and $(D_h^2f)(x)=2$. It is not difficult to verify that for well-chosen $A\in \Pn_2(\R)$, $f_{\jury}(A)_{h}\not\in \Pn_2(\R)$ for large $h>0$.
\end{example}

\subsection{Proof of Theorem~\ref{T:fh-for-jury}}

\begin{proof}[Proof of Theorem~\ref{T:fh-for-jury}] We begin with the proof of the first assertion.
\begin{enumerate}[$(a)$]
    \item If $\alpha<0$ then $f'(x)=\alpha x^{\alpha-1}<0$ for some $x\in I$, contradicting Theorem~\ref{T:horn-for-jury}. Therefore $\alpha\geq 0.$ Next we show that this is sufficient. Suppose $A:=\begin{pmatrix}a & b \\ b & c\end{pmatrix}\in \Pn_2(I)$. Then
\begin{align*}
    f_{\jury}(A)&=\begin{pmatrix}f(a) & bf'(a) \\ bf'(a) & cf'(a)+b^2f''(a)\end{pmatrix}\\
    &=\begin{pmatrix}a^{\alpha} & \alpha ba^{\alpha-1} \\ \alpha {b} a^{\alpha-1} & \alpha c a^{\alpha-1}+\alpha(\alpha-1)b^2 a^{\alpha-2}\end{pmatrix}.
\end{align*}
We need: $\det f_{\jury}(A)\geq 0$ and $f_{\jury}(A)_{1,1}\geq 0$ (as $f_{\jury}(A)_{00}=a^{\alpha}\geq 0$ already). We have both:
\begin{align*}
\alpha(ac-b^2) + \alpha^2 b^2\geq 0  &\implies  \alpha ac + \alpha(\alpha-1) b^2\geq 0 \\
&\implies f_{\jury}(A)_{1,1}= \alpha a^{\alpha-1}c + \alpha(\alpha-1) b^2a^{\alpha-2}\geq 0,
\end{align*}
and now the determinant,
\begin{align*}
 ac\geq b^2 &\implies \alpha ac + \alpha(\alpha - 1)b^2 - \alpha^2b^2\geq 0\\
&\implies \alpha a^{2\alpha-1}c + \alpha(\alpha - 1)b^2a^{2\alpha-2} - \alpha^2b^2 a^{2\alpha-2}=\det f_{\jury}(A)\geq 0.
\end{align*}
\item This follows from Theorem~\ref{T:horn-for-jury}$(b)$ in the following way. Let $\alpha<N-2$ be not a nonnegative integer. If $\alpha<0$ then $f'(x)=\alpha x^{\alpha-1}<0$ for $x>0$, a contradiction. Similarly, if $\alpha\in (k-2,k-1)$, where $k\leq N-1$ is an integer, then
\[
f^{(k)}(x)= \alpha (\alpha-1)\dots(\alpha-k+1)x^{\alpha-k}.
\]
Since $k-2<\alpha<k-1$, the above derivative is negative for $x>0$, which is a contradition to Theorem~\ref{T:horn-for-jury}$(b)$. Therefore, $f(x)\equiv x^\alpha$ is not a positivity preserver. Thus, there must exist a positive semidefinite $A$ such that $f_{\jury}(A)$ is not so.\qedhere
\end{enumerate}
\end{proof}

\section{Concluding remarks}\label{S:concluding}

\begin{remark}[The $\rho=\infty$ cases in Theorems~\ref{T:schoeberg-for-jury} and \ref{T:horn-for-jury}]
Although one might hope to dispense with the assumption $\rho=\infty$ in the implication $(a)\!\implies\!(b)$ of Theorem~\ref{T:schoeberg-for-jury} and Theorem~\ref{T:horn-for-jury}(a), our proofs indicate that this condition is not easily removed. The difficulty lies in reconciling a classical theorem of Boas and Widder \cite{boas1940functions} on divided differences with the $h$-dependent transforms $f_{\jury}(-)_h$. Nevertheless, it would be of interest to investigate the versions over bounded domains.
\end{remark}

Recall Theorem~\ref{T:fh-for-jury}$(b)$: it presents with an open case of $N\geq 3$. Looking at the simpler though ``convoluted'' proof for the $N=2$ case, it may not be straightforward to follow a similar quantitative path to show that the required determinants are nonnegative. In light of this, we propose a possible simplification and formulate a (future) question via an entrywise transform and a differential operator.

\begin{quest}\label{future-question}
Suppose $I=(0,\rho)$ for $0<\rho\leq \infty$, and fix an integer $N \geq 3$. For $\alpha > N-2$ not an integer, define
\begin{align}\label{eq:drvt-rel}
\mathcal{B}(\alpha,A)_{ij}:= \left.
\frac{\partial^{i+j}}{\partial x^{i}\, \partial y^{j}}
\Bigg( a_{00} + \sum_{(m,n)\in \mathcal{K}_{N,N}^*} a_{m,n} x^{m} y^{n} \Bigg)^{\alpha}
\;\right|_{\substack{x=0 \\ y=0}}.
\end{align}
Is it true that $\mathcal{B}(\alpha,A):=\big{(}\mathcal{B}(\alpha,A)_{ij}\big{)}_{i,j=0}^{N-1}\in \Pn_N$, for all $A\in \Pn_N(I)$?
\end{quest}

The resolution of Question~\ref{future-question} yields the open case in Theorem~\ref{T:fh-for-jury}$(b)$ via the following qualitative formulation of the transform.

\begin{prop}\label{Prop:for-conj}
Consider the premise of Question~\ref{future-question}, and suppose $f(x)\equiv x^\alpha$. Then for all $A\in \Pn_N(I)$,
\[
\diag\big(1,1,{2!},\dots,{(N-1)!}\big)~f_{\jury}(A)~\diag\big(1,1,{2!},\dots,{(N-1)!}\big)=\mathcal{B}(\alpha,A).
\]
\end{prop}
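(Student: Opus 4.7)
The strategy is entirely computational: I will expand both sides as explicit sums in the entries $a_{m,n}$ and match them coefficient by coefficient. Let $D:=\diag(0!,1!,2!,\dots,(N-1)!)$ so that the left-hand side has $(i,j)$-entry equal to $i!\,j!\,\big(f_{\jury}(A)\big)_{ij}$. The right-hand side is the $(i,j)$-mixed partial derivative of $g(x,y)^\alpha$ evaluated at the origin, where
\[
g(x,y):=a_{00}+\tilde g(x,y),\qquad \tilde g(x,y):=\sum_{(m,n)\in\mathcal{K}_{N,N}^*}a_{m,n}x^{m}y^{n}.
\]
Since $\tilde g$ vanishes at the origin, $i!\,j!$ times the coefficient of $x^{i}y^{j}$ in the formal power series of $g^\alpha$ about $(0,0)$ equals $\mathcal{B}(\alpha,A)_{ij}$.

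First I would write the (formal) binomial expansion
\[
g(x,y)^{\alpha}=\sum_{\ell=0}^{\infty}\binom{\alpha}{\ell}a_{00}^{\alpha-\ell}\tilde g(x,y)^{\ell},
\]
and observe that only $\ell\leq i+j$ contributes to the $x^{i}y^{j}$ coefficient, since each monomial in $\tilde g$ has total degree at least $1$; this renders the sum finite (so no analytic convergence issue arises for this computation, although convergence also holds near $(0,0)$ by the assumption $a_{00}>0$). Next I would apply the multinomial theorem to $\tilde g^\ell$: the coefficient of $x^{i}y^{j}$ in $\tilde g^{\ell}$ is exactly
\[
\ell!\sum_{S\in\mathcal{P}_{\ell}^{*}(i,j)}\frac{1}{\prod_{(m,n)\in\mathcal{K}_{N,N}^{*}}c_{S}(m,n)!}\prod_{(p,q)\in S}a_{p,q},
\]
because multisets $S$ of size $\ell$ drawn from $\mathcal{K}_{N,N}^{*}$ with $\sum_{(p,q)\in S}(p,q)=(i,j)$ are in bijection with the nonnegative integer solutions $(k_{m,n})$ of $\sum k_{m,n}=\ell$, $\sum m k_{m,n}=i$, $\sum n k_{m,n}=j$, via $c_{S}(m,n)=k_{m,n}$.

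Combining these, the identity $\binom{\alpha}{\ell}\ell!=\alpha(\alpha-1)\cdots(\alpha-\ell+1)$ together with $f(x)=x^{\alpha}$ gives $\binom{\alpha}{\ell}\ell!\,a_{00}^{\alpha-\ell}=f^{(\ell)}(a_{00})$, and therefore
\[
\mathcal{B}(\alpha,A)_{ij}=i!\,j!\sum_{\ell=1}^{i+j}f^{(\ell)}(a_{00})\sum_{S\in\mathcal{P}_{\ell}^{*}(i,j)}\frac{1}{\prod_{(m,n)}c_{S}(m,n)!}\prod_{(p,q)\in S}a_{p,q}
\]
for $(i,j)\neq(0,0)$, and $\mathcal{B}(\alpha,A)_{00}=a_{00}^{\alpha}=f(a_{00})$. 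Comparing with Definition~\ref{jury-transform-1} (or equivalently Theorem~\ref{T:poly-on-mat}$(a)$, extended to $f(x)=x^\alpha$), the right-hand factor equals $\big(f_{\jury}(A)\big)_{ij}$. Hence $\mathcal{B}(\alpha,A)_{ij}=i!\,j!\,\big(f_{\jury}(A)\big)_{ij}=(D f_{\jury}(A)D)_{ij}$, finishing the proof.

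The only mildly delicate point is the bookkeeping of the multinomial $\leftrightarrow$ multiset correspondence together with the factors $i!,j!,\ell!$, and the conceptual justification that exchanging the finite sum over $\ell\leq i+j$ with differentiation is legitimate -- both are routine once the truncation observation is made. I do not foresee a genuine obstacle.
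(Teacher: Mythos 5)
Your proposal is correct and follows essentially the same route as the paper's proof: both expand $\big(a_{00}+\sum_{(m,n)\in\mathcal{K}_{N,N}^{*}}a_{m,n}x^{m}y^{n}\big)^{\alpha}$ by the binomial series, apply the multinomial theorem to the inner power, note that the $x^{i}y^{j}$-coefficient truncates at order $i+j$ via the multiset/partition correspondence, and identify $\binom{\alpha}{\ell}\ell!\,a_{00}^{\alpha-\ell}$ with $f^{(\ell)}(a_{00})$. The only cosmetic difference is that you write the binomial coefficient $\binom{\alpha}{\ell}a_{00}^{\alpha-\ell}$ explicitly, whereas the paper packages the same quantity as $\tfrac{1}{k!}f^{(k)}(a_{00})$ from the start.
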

\begin{proof}
Fix $A=(a_{ij})\in \Pn_N(I)$, define the following for small $|x|,|y|>0$:
\[
F(x,y):=a_{00}+G(x,y):= a_{00}+\sum_{(m,n)\in\mathcal{K}_{N,N}^*} a_{m,n} x^m y^n.
\]
We get the following using the binomial series for small $|x|,|y|>0$,
\begin{align*}
F(x,y)^\alpha
= \sum_{k=0}^\infty \frac{1}{k!}f^{(k)}(a_{00}) G(x,y)^k.
\end{align*}
Applying the multinomial theorem for $k\geq 1$ we get,
\begin{align*}
&G(x,y)^k= \sum_{(i,j)\in \mathcal{K}_{\infty,\infty}^*} x^i y^j\sum_{S\in \mathcal{P}_{k}^*(i,j)} \frac{k!}{\prod_{(m,n)\in \mathcal{K}_{N,N}^*} c_S(m,n)!} \prod_{(p,q)\in S} a_{p,q}.
\end{align*}
Note, the above sum is finite: $(i,j)\in \mathcal{K}_{\infty,\infty}\setminus \mathcal{K}_{k(N-1)+1,k(N-1)+1}$ implies $\mathcal{P}_{k}^{*}(i,j)=\emptyset$.
For convenience, we follow a convention that the sum over $\mathcal{P}_{0}^*(0,0)$ is 1, and the sum over $\mathcal{P}_{0}^*(i,j)$ is 0 for nonzero $(i,j)$. (This can be seen as $\mathcal{P}_{0}^*(0,0)$ being True and $\mathcal{P}_{0}^*(i,j)$ being False.) Canceling the $k!$ terms, we get
\begin{align*}
&F(x,y)^\alpha \\
= & \sum_{k=0}^\infty f^{(k)}(a_{00}) \sum_{(i,j)\in \mathcal{K}_{\infty,\infty}} x^i y^j\sum_{S\in \mathcal{P}_{k}^*(i,j)} \frac{1}{\prod_{(m,n)\in \mathcal{K}_{N,N}^*} c_S(m,n)!} \prod_{(p,q)\in S} a_{p,q}\\
=&\sum_{k=0}^\infty  \sum_{(i,j)\in \mathcal{K}_{\infty,\infty}} x^i y^j \bigg{[} f^{(k)}(a_{00})  \sum_{S\in \mathcal{P}_{k}^*(i,j)} \frac{1}{\prod_{(m,n)\in \mathcal{K}_{N,N}^*} c_S(m,n)!} \prod_{(p,q)\in S} a_{p,q}\bigg{]}.
\end{align*}
Therefore, the coefficient of $x^i y^j$ in $F(x,y)^\alpha$ is given by the following, and combined with Definition~\ref{jury-transform-1}, we obtain the desired form:
\begin{align*}
&\frac{1}{i! j!}\left.
\frac{\partial^{i+j}}{\partial x^{i}\, \partial y^{j}} F(x,y)^{\alpha}
\;\right|_{\substack{x=0 \\ y=0}}\\
=& \sum_{k=0}^\infty  f^{(k)}(a_{00})  \sum_{S\in \mathcal{P}_{k}^*(i,j)} \frac{1}{\prod_{(m,n)\in \mathcal{K}_{N,N}^*} c_S(m,n)!} \prod_{(p,q)\in S} a_{p,q}\\
= & \sum_{k=0}^{i+j}  f^{(k)}(a_{00})  \sum_{S\in \mathcal{P}_{k}^*(i,j)} \frac{1}{\prod_{(m,n)\in \mathcal{K}_{N,N}^*} c_S(m,n)!} \prod_{(p,q)\in S} a_{p,q} = f_{\jury}(A)_{ij}.
\end{align*}
Here, for $(i,j)\in \mathcal{K}_{N,N}^*$, the outer sum truncates at $i+j$ because $\mathcal{P}_{k}^*(i,j)$ is empty, and hence the sum over $\mathcal{P}_{k}^*(i,j)$ is zero, for all $k>i+j$. This concludes the proof.
\end{proof}

Finally, the above qualitative formulation for power functions can be generalized for any sufficiently differentiable function using the multivariate Faà di Bruno's formula. As this extension is not needed for our purpose here, we do not go in that direction. This concludes the paper.

\section*{Acknowledgements} JM was supported by the Canada Research Chairs program (CRC-2022-00097) and the Fulbright Foundation. JM and MN were supported by NSERC Discovery grant (Canada) (RGPIN-2024-04232). PKV was supported by the Centre de recherches math\'ematiques and Universit\'e Laval (CRM-Laval) Postdoctoral Fellowship and the Alliance grant (ALLRP 577214-2022).


{\footnotesize
\setlength{\bibsep}{2pt plus 0.3pt}
\bibliographystyle{plain}
\bibliography{biblio}
}
\end{document}